\newtheorem{theorem}{Theorem}[section]
\newtheorem{lemma}{Lemma}[section]
\newtheorem{corollary}{Corollary}[section]
\theoremstyle{definition}
\theoremstyle{definition}
\newtheorem{definition}{Definition}[section]
\newcommand{\mc}{\mathcal}
\newcommand{\mf}{\mathfrak}
\newcommand{\K}{\mathbb{K}}
\newcommand{\R}{\mathbb{R}}
\newcommand{\N}{\mathbb{N}}
\newcommand{\C}{\mathbb{C}}
\renewcommand{\l}{\lambda}
\renewcommand{\O}{\Omega}
\renewcommand{\d}{\delta}
\renewcommand{\a}{\alpha}
\renewcommand{\b}{\beta}
 \renewcommand{\o}{\omega}
\numberwithin{equation}{section}
\def\a{\alpha}
\def\b{\beta}
\def\e{\varepsilon}
\def\D{\Delta}
\def\d{\delta}
\def\g{\gamma}
\def\G{\Gamma}
\def\l{\lambda}
\def\o{\omega}
\def\O{\Omega}
\def\p{\partial}
\def\s{\sigma}
\def\ua{\uparrow}
\def\da{\downarrow}
\theoremstyle{definition}
\theoremstyle{remark}
\newcommand{\Z}{\mathbb{Z}}
\newcommand{\ms}{\mathscr}
\begin{document}

\title{Global Leray--Schauder continuation for Fredholm operators}

\author[J. López-Gómez]{Julián López-Gómez}\thanks{This work has been supported by the Ministry of Science and Innovation of Spain under the
Research Grant PID2024–155890NB-I00 and by the Institute of Interdisciplinary Mathematics of the Complutense University of Madrid}
\address{Departamento de Análisis Matemático y Matemática Aplicada\\
	Instituto de Matem\'{a}tica Interdisciplinar \\
	Plaza de las Ciencias 3 \\
	Universidad Complutense de Madrid (UCM) \\
	Madrid, 28040, Spain.}
\email{jlopezgo@ucm.es}

\author[J. C. Sampedro]{Juan Carlos Sampedro} \thanks{}
\address{Departamento de Matemática Aplicada y Ciencias de la Computación \\
	Instituto de Matem\'{a}tica Interdisciplinar \\
	Avenida de los Castros 46 \\
	Universidad de Cantabria (UC) \\
	Santander, 39005, Spain.}
\email{juancarlos.sampedro@unican.es}

\begin{abstract}
This paper ascertains the global behavior of the forward and backward \lq\lq branches\rq\rq\, of solutions provided by the Leray--Schauder continuation theorem for orientable $\mathcal{C}^1$ Fredholm maps, as developed by the authors in \cite{LGS24}. Under properness on bounded sets and a nonzero local index at the given base solution, each branch satisfies the following alternative: either it is unbounded, or it reaches the boundary of the domain, or it accumulates at a different solution on the base parameter level. When the component is bounded and stays
in the interior, there is a degree balance on the base slice entailing a vanishing sum of local indices and, in particular, the existence of an even number of non-degenerate contact points. For real-analytic maps we construct locally injective parameterizations that exhibit blow-up, approach to the boundary, or return to the base level. An application to a quasilinear boundary value problem driven by the mean–curvature and Minkowski operators illustrates the global results.
\end{abstract}

\keywords{Analytic operators, Continuation theorems, Fredholm operators, Mean curvature equation, Topological degree}
\subjclass[2020]{46T20, 47H11 (primary), 47A53, 46G12, 34K18 (secondary)}

\maketitle

\tableofcontents

\section{Introduction}

\noindent
This paper discusses the classical continuation principle of Leray and Schauder, who, in 1934, in their seminal paper \cite{LeS}, introduced a degree theory for compact perturbations of the identity map in a Banach space $U$ and used it to develop a general method for solving nonlinear functional and differential equations. Roughly speaking, the Leray--Schauder continuation theorem asserts that if one can embed a given fixed point equation for a compact operator, $K$,  into a
set of fixed point equations for a one–parameter family of compact operators, $K(\l)$, in such a way that its solution set satisfies  some non-degeneration condition at some value of the parameter, $\l=\l_0$, and the solutions of $u=K(\l)u$ cannot escape through the boundary of $[a,b]\times\O$, where $a<b$, $\l_0\in[a,b]$, and
$\O$ is a bounded open subset of $U$, then, $u=K(\l)u$ admits a continuum of solutions intersecting $\{\l\}\times \O$ for all $\l\in [a,b]$. Since then,  this result has become one of the cornerstones of nonlinear functional analysis and global bifurcation theory (see, e.g., the surveys and monographs \cite{Ma1,KZ,Ze}). By a continuum we mean any closed and connected subset.
\par
However, in practice,  many nonlinear boundary value problems do not fit naturally into this compact perturbation framework, but, rather, into the setting of the more flexible degree theory for Fredholm operators of index zero. Within such context, the degree of Fitzpatrick, Pejsachowicz and Rabier \cite{FP91,FPR,PR} provides an extension of the Leray--Schauder degree, which is defined for orientable $\mathcal{C}^1$ Fredholm maps of index zero, it is stable under proper Fredholm homotopies, and it preserves the key properties supporting the Leray--Schauder continuation theorem.
\par
Essentially, in this paper we are investigating the global behavior of the unilateral components of the continuum given by the Leray--Schauder continuation theorem delivered by the authors in Theorem~3.8 of \cite{LGS24} for orientable $\mathcal{C}^1$ Fredholm maps. Let $U$ and $V$ be real Banach spaces, let $\mathcal{U}\subset \mathbb{R}\times U$ be open, and let $\mathfrak{F}:\mathcal{U}\to V$ be an orientable Fredholm map of class $\mathcal{C}^1$, which is proper on every bounded open subset of $\mathcal{U}$. Suppose $(\lambda_0,u_0)\in\mathcal{U}$ satisfies $\mathfrak{F}(\lambda_0,u_0)=0$, and the Fredholm local index of the slice map $\mathfrak{F}_{\lambda_0}$, $u\mapsto \mathfrak{F}(\lambda_0,u)$, at $u_0$ is nonzero. For $\lambda\in\mathbb{R}$ and $\Omega\subset\mathbb{R}\times U$ we write
\[
\Omega_\lambda:=\{u\in U:\; (\lambda,u)\in \Omega\}.
\]
Then, there exists a connected component $\mathscr{C}$ of $\mathfrak{F}^{-1}(0)$ passing through $(\lambda_0,u_0)$ such that its unilateral parts are non-empty, i.e.
\begin{equation}\label{eq:unilateral}
	\mathscr{C}^+\!:=\{(\lambda,u)\in\mathscr{C}:\lambda>\lambda_0\}\neq\emptyset,
	\qquad
	\mathscr{C}^-\!:=\{(\lambda,u)\in\mathscr{C}:\lambda<\lambda_0\}\neq\emptyset.
\end{equation}
When $D_u\mathfrak{F}(\lambda_0,u_0)\in GL(U,V)$, the implicit function theorem ensures that $\mathscr{C}$ is a $\mathcal{C}^1$ curve in a neighborhood of $(\lambda_0,u_0)$. This paper focuses attention on the \emph{global} alternatives satisfied by the unilateral components $\mathscr{C}^{\pm}$.
\par
More precisely, the first goal of this paper is to use the topological degree for Fredholm operators discussed
by the authors in \cite{LGS24} to ascertain the global behavior of each of the unilateral components $\ms{C}^\pm$. It turns out that each of these components, $\ms{C}^\pm$, satisfies some of the following alternatives:
\begin{enumerate}
		\item[A1.] $\mathscr{C}^{\pm}$ is unbounded in $\mc{U}$.
		\item[A2.] $\mathscr{C}^{\pm}\cap \partial\mc{U}\neq \emptyset$.
		\item[A3.] There exists $(\l_0,u_{1})\in\mc{U}_{\l_{0}}$, with $u_{1}\neq u_{0}$,  such that $(\l_{0},u_{1})\in \overline{\mathscr{C}}^{\pm}$.
	\end{enumerate}
Moreover, if $\mathscr{C}^{\pm}$ is bounded, $\mathscr{C}^{\pm}\cap \partial\mc{U}=\emptyset$, and
$$
   \mf{F}^{-1}_{\l_{0}}(0)\cap \big(\overline{\ms{C}}^\pm\big)_{\l_{0}} =\{u_0,u_1,\ldots,u_q\}
$$
for some integer $q\geq 1$, then
\begin{equation}
\label{iii.3}
	\sum_{j=0}^{q} i(\mf{F}_{\l_{0}},u_{j},\varepsilon_{\l_{0}})=0,
\end{equation}
where for any given
orientation, $\e$,  of $D_u\mf{F}$ in $\mc{O}$, we are denoting by $\e_{\l_0}$ the frozen orientation
$\e_{\l_0}(u)=\e(\l_0,u)$. In particular, there are $2\nu$, $\nu\geq 1$, points $u\in \mf{F}^{-1}_{\l_{0}}(0)\cap (\overline{\ms{C}}^\pm)_{\l_{0}}$ such that $i(\mf{F}_{\l_{0}},u,\varepsilon_{\l_{0}})\neq 0$. Therefore,
when  $\mathscr{C}^{\pm}$ is bounded and $\mathscr{C}^{\pm}\cap \partial\mc{U}= \emptyset$, our continuation theorem gives more detailed information than Theorem 1 of Santos et al. \cite{SCR}. These results are based on some ideas going back to Theorem 3.5 of Rabinowitz \cite{Ra71b} (see also Arcoya et al. \cite{ACJT})  in the context of the Leray--Schauder degree, where technicalities are far less sophisticated than in the framework of the Fitzpatrick--Pejsachowicz--Rabier degree for Fredholm operators (see \cite{FP91}--\cite{FPR} and \cite{PR}) dealt with in this paper.
\par
The second goal of this paper is to show that if, in addition, $\mf{F}(\l,u)$ is real analytic in $\l$ and $u$, then, there exist $\o^\pm\in \N\cup\{+\infty\}$ and two locally injective continuous curves,
$$
     \G^{\pm}: (0,\o^\pm)\longrightarrow \mc{U}_{\l_{0}}^{\pm}:=\{(\l,u)\in\mc{U}\;:\;\pm(\l-\l_0)>0\},
$$
such that
$$
   \Gamma^{\pm}\left((0,\o^\pm)\right)\subset \mf{F}^{-1}(0), \qquad \lim_{t\da 0}\Gamma^{\pm}(t)=(\l_{0},u_{0}),
$$
for which some of the following non-excluding options occurs:
\begin{enumerate}
		\item[{\rm (a)}]   The curve $\G^\pm$ blows up at $\omega^\pm$, in the sense that
$$
    \limsup_{t\ua \o^\pm} \|\Gamma^{\pm}(t)\|_{\mathbb{R}\times U}= +\infty.
$$
\item[{\rm (b)}]   The curve $\G^\pm$ approximates $\p \mc{U}$ as $t\to \o^\pm$, in the sense that
there exists a sequence $\{t_{n}\}_{n\in\N}$ in $(0,\o^\pm)$ such that
$$
   \lim_{n\to +\infty}t_{n} = \o^\pm \;\;\hbox{and}\;\;  \lim_{n\to+\infty} \Gamma^{\pm}(t_{n})= (\l_{\ast},u_{\ast})\in \partial\mc{U}.
$$
		\item[{\rm (c)}]   The curve $\G^\pm$ turns backwards to the level $\l=\l_0$, in the sense that there exist $u_{1}\in \mc{U}_{\l_{0}}\setminus\{u_{0}\}$ and a sequence $\{t_{n}\}_{n\in\N}$ in $(0,\o^\pm)$ such that
$$
   \lim_{n\to +\infty} t_{n} = \o^\pm\;\; \hbox{and}\;\; \lim_{n\to+\infty}\Gamma^{\pm}(t_{n})=(\l_{0},u_{1})\in\mf{F}^{-1}(0).
$$
\end{enumerate}
This result is based on the theorem of structure of real analytic varieties, which has been revisited in Theorem \ref{thA.1} of the Appendix, and on some technical tools developed by Buffoni and Toland in \cite{BT} and its references.
\par
The organization of this paper is the following. Section 2 collects some elements on the degree for Fredholm operators and delivers the generalized homotopy invariance property used through this paper. Section 3 discusses the global behavior of the components $\ms{C}^\pm$ through the technical devices developed by the authors in Theorem 6.3.1 of \cite{LG01}, in the context of the Leray--Schauder degree,  and Theorem 5.9 of \cite{LGS24},  in the context of the degree for Fredholm operators. The main technical device to prove
our main continuation theorem in Section 3 is the construction of open isolating neighborhoods for the
unilateral components $\ms{C}^\pm$. This construction is based on a celebrated theorem of Whyburn \cite{Wh} on
continua. In Section 4, we show, with a particular example, how the existence of open isolating neighborhoods relies on the compactness properties of the underlying fixed point operators. In Section 5 we deliver our main analytic continuation theorems discussed before. Section~6 applies the abstract results developed in the preceding sections to a quasilinear parameter-dependent boundary value problem driven by the mean–curvature operator and the Minkowski operator. As these operators do not admit a convenient inversion yielding to a compact perturbation of the identity, the Fredholm framework is particularly well suited to this setting. Finally, Appendix~A summarizes some basic facts on real-analytic varieties borrowed from Buffoni and Toland~\cite{BT}.
\par
Throughout this paper, for any given linear operator $T$ between two Banach spaces, we denote by $N[T]$ and $R[T]$ the null space (or kernel) and the range (or image) of $T$, respectively. Moreover,
the notation $\uplus$ stands for the disjoint union.

\section{Preliminaries}

\noindent This section collects some basic concepts and results that are going to be used throughout this paper. In particular, we recall the definition of the topological degree for Fredholm operators of index zero. We begin by reviewing the notions of \emph{parity} and \emph{orientability}.

\subsection{Parity and Orientability}

In this section, $(U,V)$ is a pair of real Banach spaces and $\Phi_n(U,V)$ stands for the set of Fredholm operators $T: U\to V$ of index $n\in\N$. A continuous curve of operators
$\mathscr{L}\in\mathcal{C}([a,b],\Phi_{0}(U,V))$
is said to be \emph{admissible} if its endpoints are invertible, i.e.
$\mathscr{L}(a)$, $\mathscr{L}(b)\in GL(U,V)$.
For any given admissible curve $\mathscr{L}(\l)$, a \emph{parametrix} is a curve
$\mathfrak{P}\in\mathcal{C}([a,b],GL(V,U))$  such that
\[
\mathfrak{P}(\lambda)\,\mathscr{L}(\lambda)-I_{U}\in\mathcal{K}(U)\quad \hbox{for all}\;\; \lambda\in[a,b],
\]
where $\mathcal{K}(U)$ denotes the space of compact operators on $U$ and $I_{U}:U\to U$ the identity operator.
The existence of a parametrix is guaranteed by Theorem~2.1 of Fitzpatrick and Pejsachowicz~\cite{FP91}. For any given admissible curve $\mathscr{L}(\l)$, its \emph{parity} in $[a,b]$ is defined by
\begin{equation*}
	\sigma(\mathscr{L},[a,b])
	:=\deg_{LS}(\mathfrak{P}(a)\mathscr{L}(a))\cdot
	\deg_{LS}(\mathfrak{P}(b)\mathscr{L}(b)),
\end{equation*}
where $\mathfrak{P}$ is any parametrix of $\mathscr{L}$, and
$$
    \deg_{LS}(T):=\deg_{LS}(T,B_{\varepsilon})
$$
stands for the Leray--Schauder degree of the operator $T$
in the ball of radius $\e>0$ centered at $0$, $B_\e$, for sufficiently small $\varepsilon>0$. This definition is consistent, since $\sigma$ is independent of the chosen parametrix.
\par
The main obstacle in defining a topological degree for Fredholm operators of index zero is the absence of a canonical orientation in $GL(U,V)\subset\Phi_{0}(U,V)$ (see e.g. Kuiper \cite{Ku}).
The classical approach of Fitzpatrick, Pejsachowicz and Rabier~\cite{FPR}
restricts attention to those maps for which a notion of \emph{orientability} can be introduced.
\par
Let $X$ be a path-connected topological space and let $h:X\to \Phi_{0}(U,V)$ be a continuous map.
A point $x\in X$ is said to be \emph{regular with respect to $h$} if $h(x)\in GL(U,V)$. In this paper,
we denote by $\mathscr{R}_{h}$ the set of regular points of $h$, i.e.
$$
   \mathscr{R}_h:=h^{-1}(GL(U,V)).
$$

\begin{definition}
	\label{deb.1}
Let $X$ be a path-connected topological space and $(U,V)$ a pair of real Banach spaces.
A continuous map $h:X\to\Phi_{0}(U,V)$ is said to be \emph{orientable} if there exists a function
\(\varepsilon:\mathscr{R}_{h}\to\mathbb{Z}_{2}\), called an orientation, such that, for every continuous curve 	 $\gamma\in\mathcal{C}([a,b],X)$ with $\gamma(a),\gamma(b)\in \mathscr{R}_{h}$,  one has that
\begin{equation}
		\label{ii.1}
		\sigma(h\circ\gamma,[a,b])=\varepsilon(\gamma(a))\cdot \varepsilon(\gamma(b)).
\end{equation}
Should $X$ have several path-connected components, $h$ is said to be orientable if it is orientable on each of them.
\end{definition}

\subsection{Degree for Fredholm operators}
First, we introduce the class of operators for which the degree is defined.
Let $\mathcal{O}\subset U$ be open, $n\in\mathbb{Z}$, and $r\in\mathbb{N}$.
An operator $f:\mathcal{O}\to V$ is called $\mathcal{C}^{r}$-\emph{Fredholm of index $n$} if
$$
   f \in\mathcal{C}^{r}(\mathcal{O},V)\;\;\hbox{and}\;\; Df\in \mathcal{C}^{r-1}(\mathcal{O},\Phi_{n}(U,V)).
$$
Throughout this paper, the collection of these maps is denoted by $\mathscr{F}^{r}_{n}(\mathcal{O},V)$. For any given $f\in \mathscr{F}^{r}_{0}(\mathcal{O},V)$, it is said that $f$ is \emph{orientable} if
$Df:\mathcal{O}\to \Phi_{0}(U,V)$ is orientable.
\par
Let $\Omega\subset U$ be an open and bounded subset of $U$ such that $\overline{\Omega}\subset\mathcal{O}$, and suppose that the  operator $f:\mathcal{O}\to V$ satisfies the following conditions:
\begin{enumerate}
	\item $f\in \mathscr{F}^{1}_{0}(\mathcal{O},V)$ is orientable, with orientation
	$\varepsilon:\mathscr{R}_{Df}\to\mathbb{Z}_{2}$;
	\item $f$ is proper in $\overline{\Omega}$, i.e. $f^{-1}(K)\cap \overline{\O}$ is compact for every compact subset $K\subset V$;
	\item $0\notin f(\partial\Omega)$.
\end{enumerate}
Then, it is said that $(f,\Omega,\varepsilon)$ is a \emph{Fredholm $\mathcal{O}$-admissible triple}.
The class of all these triples is denoted by $\mathscr{A}(\mathcal{O})$.
\par
Next, we introduce the set of admissible homotopies. A map $H\in \mathcal{C}^{r}([a,b]\times \mathcal{O},V)$ is said to be a $\mathcal{C}^{r}$-\emph{Fredholm homotopy} if $H\in \mathscr{F}^{r}_{1}([a,b]\times \mathcal{O},V)$, or, equivalently,
$$
   D_{u}H(\l,u)\in \Phi_{0}(U,V)\;\;\hbox{for all}\;\; (\l,u)\in [a,b]\times \mathcal{O}.
$$
Such a homotopy $H$ is said to be \emph{orientable} if
\(D_{u}H:[a,b]\times \mathcal{O}\to \Phi_{0}(U,V)\) is orientable.
We denote by $\varepsilon_{\l}$ the restriction of the orientation to the time slice $\l$, i.e.
\begin{equation}
	\label{ii.2}
	\varepsilon_{\l}: \mathscr{R}_{D_u H_{\l}}\longrightarrow \mathbb{Z}_{2},
	\qquad \varepsilon_{\l}(x):=\varepsilon(\l,x), \quad \l\in[a,b],
\end{equation}
where $H_\l\equiv H(\l,\cdot)$. Then, for any given open and bounded subset $\Omega\subset U$ with $\overline{\Omega}\subset\mathcal{O}$, the triple $(H,\Omega,\varepsilon)$ is called a \emph{Fredholm $\mathcal{O}$-admissible homotopy} if
\begin{enumerate}
	\item $H\in \mathscr{F}^{1}_{1}([a,b]\times \mathcal{O},V)$ is orientable with orientation
	$\varepsilon:\mathscr{R}_{D_{u}H}\to \mathbb{Z}_{2}$;
	\item $H$ is proper in $[a,b]\times\overline{\Omega}$;
	\item $0\notin H([a,b]\times \partial\Omega)$.
\end{enumerate}
The set of these homotopies is denoted by $\mathscr{H}(\mathcal{O})$.
\par
Finally, for any given $f:\mathcal{O}\to V$ of class $\mathcal{C}^{r}$, a point $u\in \mathcal{O}$ is
said to be a \emph{regular point} of $f$ if $Df(u)$ is surjective, which entails $Df(u)\in GL(U,V)$ if $f$ is $\mc{C}^{r}$-Fredholm of index zero. Subsequently, the set of regular points of $f$ is denoted by $\ms{R}_f\equiv \mathscr{R}_{Df}$.
Similarly, for every closed or open subset $\mathscr{O}\subset\mathcal{O}$, a point $v\in V$ is said to be a
\emph{regular value} of $f:\mathscr{O}\to V$ if $f^{-1}(v)\cap\mathscr{O}$ consists of regular points of $f$. The set of regular values of $f$ is denoted by $\mathscr{RV}_{f}(\mathscr{O})$, i.e.
$$
    \mathscr{RV}_{f}(\mathscr{O}):=\{v\in V\;:\; f^{-1}(v)\cap\mathscr{O}\subset \ms{R}_f\}.
$$
We already have all necessary ingredients to introduce the degree. Let $(f,\Omega,\varepsilon)\in \mathscr{A}(\mathcal{O})$ with $0\in \mathscr{RV}_{f}(\Omega)$. Then,
$$
   f^{-1}(0)\cap \Omega = f^{-1}(0)\cap \overline{\Omega}
$$
is finite, possibly empty, and the \emph{degree} is defined as
\[
  \deg(f,\Omega,\varepsilon):=\sum_{u\in f^{-1}(0)\cap\Omega}\varepsilon(u).
\]
If $f^{-1}(0)\cap \Omega=\emptyset$, we simply set $\deg(f,\Omega,\varepsilon):=0$. Lastly, if $0\notin \mathscr{RV}_{f}(\Omega)$, then we define
\[
\deg(f,\Omega,\varepsilon):=\deg(f-v,\Omega,\varepsilon),
\]
where $v\in V$ is a regular value sufficiently close to $0$, whose existence is guaranteed
by the Quinn--Sard--Smale theorem (see \cite{Sa} \cite{Sm} and \cite{QS}). Since $Df=D(f-v)$, the orientation $\varepsilon$ is the same for $f$ and $f-v$. As the definition of the degree is independent of the choice of the open set $\mathcal{O}$,  the degree is well defined as an integer-valued map on the quotient
$\mathscr{A}/\sim$, where
$$
\mathscr{A}:=\bigcup_{\mathcal{O}\subset U \text{ open}} \mathscr{A}(\mathcal{O}),
$$
and the equivalence relation $\sim$ identifies two admissible triples, $(f_1,\O_1,\e_1)$ and $(f_2,\O_2,\e_2)$, as soon as the next three properties hold: (i) $\Omega_{1}=\Omega_{2}=\Omega$, (ii) $f_{1}(u)=f_{2}(u)$ for all $u\in \overline{\Omega}$,
(iii) $\varepsilon_{1}(u)=\varepsilon_{2}(u)$ for all $u\in \mathscr{R}_{f_{1}}\cap\Omega=\mathscr{R}_{f_{2}}\cap\Omega$. The next theorem axiomatizes the degree for Fredholm operators.

\begin{theorem}
\label{th2.1}
There exists a unique integer-valued map
$$
   \deg:\mathcal{A}\equiv \mathscr{A}/\sim\;\longrightarrow \mathbb{Z}
$$
satisfying the following properties:
\begin{enumerate}
\item[{\rm (N)}] \textbf{Normalization:} Suppose $L\in GL(U,V)$ has orientation $\varepsilon$
and $\Omega$ is  an open and bounded subset of $U$ with $0\in\Omega$. Then,
$$
		\deg(L,\Omega,\varepsilon)=\varepsilon(0).
$$
\item[{\rm (A)}] \textbf{Additivity:} Suppose $(f,\Omega,\varepsilon)\in\mathcal{A}$ and $\Omega_{1},\Omega_{2}\subset \Omega$ are disjoint open subsets with $0\notin f\big(\Omega\setminus(\Omega_{1}\uplus\Omega_{2})\big)$. Then,
\begin{equation}
			\label{ii.3}
			\deg(f,\Omega,\varepsilon)
			=\deg(f,\Omega_{1},\varepsilon)
			+\deg(f,\Omega_{2},\varepsilon).
\end{equation}
		\item[{\rm (H)}] \textbf{Homotopy Invariance:}
		Suppose $(H,\Omega,\varepsilon)\in \mathscr{H}(\mathcal{O})$ is a Fredholm $\mathcal{O}$-admissible homotopy. Then,
\begin{equation}
\label{ii.4}
			\deg(H(a,\cdot),\Omega,\varepsilon_{a})
			=\deg(H(b,\cdot),\Omega,\varepsilon_{b}).
\end{equation}
\end{enumerate}
\end{theorem}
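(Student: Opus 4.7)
The plan is to establish existence and uniqueness separately. For existence, the map $\deg$ has already been defined by the formula $\deg(f,\Omega,\varepsilon)=\sum_{u\in f^{-1}(0)\cap\Omega}\varepsilon(u)$ when $0\in\mathscr{RV}_f(\Omega)$, and extended via $\deg(f,\Omega,\varepsilon):=\deg(f-v,\Omega,\varepsilon)$ for a sufficiently small regular value $v$ otherwise. I would first verify that this extension is well posed: given two small regular values $v_1,v_2$ guaranteed by Quinn--Sard--Smale, one shows that the integer-valued sum is constant along the admissible segment $t\mapsto f-(1-t)v_1-tv_2$, exploiting that $Df$ (and hence the orientation $\varepsilon$) is unchanged by the perturbation. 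Normalization (N) is then immediate since $L^{-1}(0)=\{0\}$ and the sum reduces to $\varepsilon(0)$. Additivity (A) is transparent in the regular case because $f^{-1}(0)\cap\Omega$ partitions over $\Omega_1,\Omega_2$; the general case follows by choosing a single small regular value $v$ that is simultaneously regular for $f$ on $\Omega$, $\Omega_1$, and $\Omega_2$, and reducing to the regular-value case.

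The central obstacle is homotopy invariance (H). After a small perturbation I would reduce to the case in which $0$ is a regular value of both $H_a$ and $H_b$. Since $H\in\mathscr{F}^1_1([a,b]\times\mathcal{O},V)$, the Quinn--Sard--Smale theorem yields a regular value $v\in V$ of $H$ arbitrarily close to $0$ such that $\mathscr{M}:=H^{-1}(v)\cap([a,b]\times\Omega)$ is a one-dimensional $\mathcal{C}^1$-submanifold; properness on $[a,b]\times\overline{\Omega}$ together with $0\notin H([a,b]\times\partial\Omega)$ ensures that $\mathscr{M}$ is compact and disjoint from $[a,b]\times\partial\Omega$. Its components are therefore either closed loops or embedded arcs whose endpoints lie on $\{a,b\}\times\Omega$. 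Loops contribute nothing to either slice; arcs with both endpoints on the same slice contribute pairs of opposite-sign orientations to the corresponding degree sum; and an arc joining $\{a\}\times\Omega$ to $\{b\}\times\Omega$ contributes one summand to $\deg(H(a,\cdot),\Omega,\varepsilon_a)$ and one to $\deg(H(b,\cdot),\Omega,\varepsilon_b)$ of equal sign. The sign-matching is exactly the content of the parity identity \eqref{ii.1} applied to $D_uH$ along a parametrization $\gamma$ of the arc: one obtains $\sigma(D_uH\circ\gamma,[a,b])=\varepsilon(\gamma(a))\varepsilon(\gamma(b))$, which combined with the Leray--Schauder parity formula forces the two endpoint contributions to agree. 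Summing over arcs yields the desired equality; this is precisely where orientability does its essential work.

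For uniqueness, let $\deg'$ be any integer-valued map on $\mathcal{A}$ satisfying (N), (A), and (H). On an admissible triple with $0\in\mathscr{RV}_f(\Omega)$, additivity localizes $\deg'(f,\Omega,\varepsilon)$ as a sum over small disjoint balls $B_\delta(u)$ around each $u\in f^{-1}(0)\cap\Omega$. On each such ball, the linear homotopy $H_t(v):=tf(v)+(1-t)Df(u)(v-u)$ is Fredholm and, for $\delta$ small enough, $D_vH_t$ stays uniformly in $GL(U,V)$, so the triple is $\mathcal{O}$-admissible with constant orientation equal to $\varepsilon(u)$; by (H) and (N), $\deg'(f,B_\delta(u),\varepsilon)=\deg'(Df(u)(\cdot-u),B_\delta(u),\varepsilon)=\varepsilon(u)$. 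The non-regular case is then dispatched by the admissible homotopy $t\mapsto f-tv$ with a small regular value $v$, which through (H) gives $\deg'(f,\Omega,\varepsilon)=\deg'(f-v,\Omega,\varepsilon)$, matching the defining formula. The hard technical content of the theorem lies entirely in (H); once it is in place, uniqueness and the remaining axioms reduce to bookkeeping against the explicit formula.
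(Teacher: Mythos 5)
The paper does not actually prove Theorem~\ref{th2.1}; it states it and cites \cite{FPR,PR} for existence and \cite{LGS22,LGS20,LGM} for uniqueness, so your proposal is supplying a proof that the paper itself delegates to its references. Your strategy---a regular-value cobordism for (H), together with a localize-and-linearize argument for uniqueness---is the classical route and differs from the cited one, in particular on the uniqueness side, where \cite{LGS22,LGS20} proceed via a generalized Leray--Schauder formula and the Esquinas--L\'opez-G\'omez algebraic multiplicity rather than your direct linearization at regular zeros.

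The cobordism argument for (H), however, has a genuine gap. You invoke the Quinn--Sard--Smale theorem to obtain a regular value $v$ of the full homotopy $H$, so that $H^{-1}(v)\cap([a,b]\times\Omega)$ is a one-dimensional $\mathcal{C}^1$ manifold. But $H$ is Fredholm of index $1$, and the Smale and Quinn Sard-type theorems for a $\mathcal{C}^q$ Fredholm map of index $\nu$ require $q>\max(\nu,0)$; for $\nu=1$ this forces $q\geq 2$, whereas the class $\mathscr{H}(\mathcal{O})$ only assumes $\mathcal{C}^1$. The paper's own appeal to Quinn--Sard--Smale is only for the slice map $f$, which has index $0$, where $\mathcal{C}^1$ is enough; the homotopy is a different matter. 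This is precisely why the original construction in \cite{FPR} was restricted to $\mathcal{C}^2$ maps and why the $\mathcal{C}^1$ extension of \cite{PR} required a genuinely different argument rather than a regular-value cobordism. Your sketch would establish the theorem under an extra $\mathcal{C}^2$ hypothesis, but it does not prove (H), and hence the theorem, in the stated $\mathcal{C}^1$ generality. As a secondary point, even granting $\mathcal{C}^2$, the endpoint sign bookkeeping along arcs needs more than the parity identity \eqref{ii.1} alone: you must also record which boundary slice each endpoint lies on and verify the compatibility between the Fitzpatrick--Pejsachowicz orientation and the Leray--Schauder sign at folds---that verification is the nontrivial content hidden behind your remark that \lq\lq orientability does its essential work.\rq\rq
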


Axiom (A) packages the additivity, the excision and the existence properties of the degree, as discussed by the authors in \cite{LGS22}. The existence of the degree was established by Fitzpatrick, Pejsachowicz and Rabier in \cite{FPR} for $\mc{C}^2$ mappings based on the concept of orientability introduced by Fitzpatrick and Pejsachowicz in \cite{FP91,FP93},  and later generalized to $\mc{C}^1$ mappings by Pejsachowicz and Rabier in \cite{PR}. The uniqueness was proven by the authors in \cite{LGS22} based on the generalized Leray--Schauder formula of the authors in \cite{LGS20} through the generalized algebraic multiplicity of Esquinas and L\'{o}pez-G\'{o}mez \cite{ELG,Es,LG01}. The monograph of L\'{o}pez-G\'{o}mez and Mora-Corral \cite{LGM} establishes the uniqueness of the generalized algebraic multiplicity.

\subsection{Generalized Homotopy Invariance} In this section we deliver a generalized homotopy invariance property appropriate for the requirements of this paper.  For every subset $\O\subset \mathbb{R}\times U$ and any $\l\in \mathbb{R}$, we are denoting
$$
\O_\l:=\{u\in U\,:\;(\l,u)\in\O\}.
$$
For every  open bounded subset $\O\subset [a,b]\times U$, any open subset $\mc{O}\subset U$ such that $\overline{\O}\subset [a,b]\times \mc{O}$, and any homotopy $H:[a,b]\times \mc{O}\to V$, it is said that $(H,\O,\varepsilon)$ is a \textit{generalized  $\mc{O}$--admissible homotopy} if the following three properties
hold:
\begin{enumerate}
	\item $H\in\mathscr{F}^{1}_{1}([a,b]\times \mc{O},V)$ is \textit{orientable} with orientation $\varepsilon:\mathscr{R}_{D_u H}\to\Z_{2}$;
	\item $H$ is proper on $\overline{\O}$;
	\item $0\notin H(\partial\O)$.
\end{enumerate}
The class of the generalized $\mc{O}$--admissible homotopies is denoted by $\mathscr{G}(\mc{O})$ in this paper.

\begin{theorem}
\label{thb.2}
Let $(H,\O,\varepsilon)\in \mathscr{G}(\mc{O})$ be a generalized $\mc{O}$--admissible homotopy. Then,
\begin{equation*}
		\deg(H_{a},\Omega_{a},\varepsilon_{a})=\deg(H_{b},\Omega_{b},\varepsilon_{b}).
\end{equation*}
\end{theorem}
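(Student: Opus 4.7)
The plan is to reduce the generalized homotopy invariance on the non-cylindrical domain $\Omega\subset[a,b]\times U$ to the standard cylindrical version given by Theorem \ref{th2.1}(H), by slicing $[a,b]$ into finitely many subintervals on each of which a tube-lemma construction produces a genuine cylindrical admissible subdomain. The equalities between consecutive slices will then be bridged via the excision consequence of the additivity axiom (A).

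First I would set $K:=H^{-1}(0)\cap\overline{\Omega}$. Properness of $H$ on $\overline{\Omega}$ renders $K$ compact, and the boundary condition $0\notin H(\partial\Omega)$ forces $K\subset\Omega$; consequently, each slice $K_{\lambda}:=\{u\in U:(\lambda,u)\in K\}$ is a compact subset of $U$. For a fixed $\lambda\in[a,b]$, every $(\lambda,u)$ with $u\in K_{\lambda}$ admits a product neighborhood whose closure lies in the open set $\Omega$; covering $K_\lambda$ by finitely many such balls produces an open bounded $V_\lambda\subset U$ and $r_\lambda>0$ with
$$
\big([\lambda-r_\lambda,\lambda+r_\lambda]\cap[a,b]\big)\times\overline{V_\lambda}\;\subset\;\Omega.
$$
Shrinking $r_\lambda$ if needed I can moreover arrange that every zero of $H$ in this slab lies in $V_\lambda$, for otherwise a sequence $(\lambda_n,u_n)\in K$ with $\lambda_n\to\lambda$ and $u_n\notin V_\lambda$ would, by compactness of $K$, subconverge to a point of $K_\lambda\subset V_\lambda$, contradicting the openness of $V_\lambda$.

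By compactness of $[a,b]$ and the Lebesgue number lemma I then obtain a partition $a=t_0<t_1<\cdots<t_n=b$ and open bounded sets $W_1,\ldots,W_n\subset U$ (each chosen as some $V_{\mu_i}$) such that
$$
[t_{i-1},t_i]\times\overline{W_i}\subset\Omega
\quad\text{and}\quad
K\cap([t_{i-1},t_i]\times U)\subset[t_{i-1},t_i]\times W_i.
$$
On each cylinder, properness is inherited from $\overline{\Omega}$ and no zero lies on $[t_{i-1},t_i]\times\partial W_i$, so the restriction of $H$ is a Fredholm $\mathcal{O}$-admissible homotopy in $\mathscr{H}(\mathcal{O})$, and Theorem \ref{th2.1}(H) yields
$$
\deg(H_{t_{i-1}},W_i,\varepsilon_{t_{i-1}})=\deg(H_{t_i},W_i,\varepsilon_{t_i}),\qquad i=1,\ldots,n.
$$

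Finally, since $W_i\subset\Omega_{t_i}$ and every zero of $H_{t_i}$ in $\Omega_{t_i}$ is contained in $W_i\cap W_{i+1}$, the excision consequence of axiom (A) delivers $\deg(H_{t_i},W_i,\varepsilon_{t_i})=\deg(H_{t_i},\Omega_{t_i},\varepsilon_{t_i})=\deg(H_{t_i},W_{i+1},\varepsilon_{t_i})$ at every interior node, together with the boundary identities $\deg(H_a,W_1,\varepsilon_a)=\deg(H_a,\Omega_a,\varepsilon_a)$ and $\deg(H_b,W_n,\varepsilon_b)=\deg(H_b,\Omega_b,\varepsilon_b)$. Concatenating these equalities along $i=1,\ldots,n$ yields $\deg(H_a,\Omega_a,\varepsilon_a)=\deg(H_b,\Omega_b,\varepsilon_b)$. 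The technical heart of the argument is the tube-lemma construction of the partition and the $W_i$ in infinite dimensions, where the slices $\Omega_\lambda$ may vary non-monotonically; but compactness of $K$ (coming from properness) and the Lebesgue number lemma on $[a,b]$ reduce this to a finite topological bookkeeping, with the orientation $\varepsilon$ transferred without change because $D_uH$ is defined on the full homotopy.
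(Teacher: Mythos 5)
Your proof is correct and follows essentially the same strategy as the paper's: use properness to get compactness of $K=H^{-1}(0)\cap\overline{\Omega}$, cover $[a,b]$ by finitely many subintervals on which the zero set sits in a stable open slice, apply cylindrical homotopy invariance on each piece, and chain the endpoints by excision. The only real divergence is cosmetic but worth noting: the paper works directly with the slices $\Omega_{\lambda_i}$ of $\Omega$, proving the stability identity $H^{-1}_{\lambda}(0)\cap\Omega_{\lambda}=H^{-1}_{\lambda}(0)\cap\Omega_{\lambda_i}$ for $\lambda$ near $\lambda_i$, whereas you build fresh bounded open sets $W_i$ with $[t_{i-1},t_i]\times\overline{W_i}\subset\Omega$ via a tube-lemma covering, which makes the properness of $H$ on each cylinder and the absence of zeros on $[t_{i-1},t_i]\times\partial W_i$ automatic consequences of the construction rather than facts to be verified separately.
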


\begin{proof}
Without loss of generality, we may assume that $[a,b] = [0,1]$. We claim that, for every $\l_{0}\in[0,1]$,  there exists $\eta>0$ such that:
\begin{equation}
\label{e1}
	H^{-1}_{\l}(0)\cap \O_{\l}=H^{-1}_{\l}(0)\cap \O_{\l_{0}} \;\;\hbox{if}\;\; |\l-\l_0|\leq \eta.
\end{equation}
Suppose \eqref{e1} fails for all $\eta>0$. Then, there exists a sequence $\{(\l_{n},u_{n})\}_{n\in\mathbb{N}}$ in $H^{-1}(0)\cap \O$ such that
$$
   \lim_{n\to+\infty}\l_{n}=\l_{0}\;\;\hbox{and}\;\; u_{n}\in\O_{\l_{n}}\backslash\O_{\l_{0}} \;\;\hbox{for all}
   \;\; n\geq 1.
$$
Since $H^{-1}(0)\cap \overline{\O}$ is compact, without loss of generality, we can assume that
$$
	\lim_{n\to+\infty}(\l_{n},u_{n})=(\l_{0},u_{0})\in H^{-1}(0)\cap \overline{\O}.
$$
Since $0\notin H(\partial\O)$, necessarily $(\l_{0},u_{0})\in H^{-1}(0)\cap \O$. Thus, $u_{0}\in H_{\l_{0}}^{-1}(0)\cap \O_{\l_{0}}$. In particular $\O_{\l_{0}}\neq \emptyset$. But this contradicts the fact that $u_{n}\in\O_{\l_{n}}\backslash\O_{\l_{0}}$ for all  $n\geq 1$. So, \eqref{e1} also holds.
\par
By the compactness of $[0,1]$, there is some integer $m\in\mathbb{N}$ such that, setting
$$
	\l_{i}:=\frac{i}{m}, \qquad 0\leq i \leq m,
$$
we have that
$$
    H^{-1}_{\l}(0)\cap \O_{\l}=H^{-1}_{\l}(0)\cap \O_{\l_{i}}\;\;\hbox{for all}\;\; \l\in[\l_{i},\l_{i+1}],\;\;
    0\leq i \leq m-1.
$$
Thus, thanks to the excision property of the degree,
\begin{equation*}	
   \deg(H_{\l},\O_{\l},\varepsilon_{\l})=\deg(H_{\l},\O_{\l_{i}},\varepsilon_{\l})
   \;\;\hbox{for all}\;\; \l\in[\l_{i},\l_{i+1}],\;\;     0\leq i \leq m-1.
\end{equation*}
In particular,
\begin{equation}
\label{ii.6}	 \deg(H_{\l_{i+1}},\O_{\l_{i+1}},\varepsilon_{\l_{i+1}})=\deg(H_{\l_{i+1}},\O_{\l_{i}},\varepsilon_{\l_{i+1}}).
\end{equation}
Moreover, since $0\notin H([\l_{i},\l_{i+1}]\times\partial\O_{\l_{i}})$, by the homotopy invariance property,
\begin{equation*}
		 \deg(H_{\l_{i}},\O_{\l_{i}},\varepsilon_{\l_{i}})=\deg(H_{\l_{i+1}},\O_{\l_{i}},\varepsilon_{\l_{i+1}}).
\end{equation*}
Consequently, thanks to \eqref{ii.6}, we find that
\begin{equation*}
		 \deg(H_{\l_{i}},\O_{\l_{i}},\varepsilon_{\l_{i}})=\deg(H_{\l_{i+1}},\O_{\l_{i+1}},\varepsilon_{\l_{i+1}})
\;\; \hbox{for all}\;\; i\in\{0,...,m-1\}.
\end{equation*}
Therefore,
$$
    \deg(H_{0},\O_{0},\varepsilon_{0})=\deg(H_{\l_{0}},\O_{\l_{0}},\varepsilon_{\l_{0}})=
    \deg(H_{\l_{m}},\O_{\l_{m}},\varepsilon_{\l_{m}})=\deg(H_{1},\O_{1},\varepsilon_{1}),
$$
as claimed.
\end{proof}

\section{Topological continuation theorems}
\label{Se3}

\noindent In this section we deliver the main continuation theorems of this paper. Their proofs rely to a large extent on the existence of open isolating neighborhoods for the solution components of the underlying nonlinear equations, which is based on a well known result in the context of bifurcation theory, going back to
Whyburn \cite{Wh} and reading as follows.

\begin{lemma}
\label{le3.1}
	Let $(M,d)$ be a compact metric space and $A$ and $B$ two disjoint compact subsets of M. Then, either there exists a connected component of $M$ meeting both $A$ and $B$, or
$$
   M = M_{A} \uplus M_{B},
$$
where $M_{A}$ and $M_{B}$ are disjoint compact subsets of $M$ containing $A$ and $B$, respectively.
\end{lemma}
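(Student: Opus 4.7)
The plan is to prove the contrapositive: assume that no connected component of $M$ meets both $A$ and $B$, and construct compact sets $M_A\supset A$ and $M_B\supset B$ with $M=M_A\uplus M_B$. Since $M$ is compact and Hausdorff, it suffices to produce a clopen subset $M_A$ of $M$ containing $A$ and disjoint from $B$; then $M_B:=M\setminus M_A$ is automatically clopen (hence compact) and contains $B$.

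The key tool I would invoke is the classical characterization of connected components in a compact Hausdorff space: for every $x\in M$, the connected component $C_x$ of $x$ coincides with its quasi-component, i.e., with the intersection of all clopen subsets of $M$ containing $x$. Granted this, fix $x\in A$ and let $\mathcal{F}_x$ denote the family of clopen sets containing $x$, which is closed under finite intersections. By hypothesis $C_x\cap B=\emptyset$, so $B\subset M\setminus\bigcap\mathcal{F}_x=\bigcup_{K\in\mathcal{F}_x}(M\setminus K)$. Compactness of $B$ then yields finitely many $K_1,\dots,K_r\in\mathcal{F}_x$ with $B\subset\bigcup_{i=1}^r(M\setminus K_i)$, and the corresponding finite intersection
\[
K_x:=K_1\cap\cdots\cap K_r\in\mathcal{F}_x
\]
is a clopen neighbourhood of $x$ that is disjoint from $B$.

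Next, the family $\{K_x\}_{x\in A}$ is an open cover of the compact set $A$; extracting a finite subcover $K_{x_1},\dots,K_{x_n}$, I would set
\[
M_A:=K_{x_1}\cup\cdots\cup K_{x_n},\qquad M_B:=M\setminus M_A.
\]
Both sets are clopen as finite unions or complements of clopen sets, and hence compact. By construction $A\subset M_A$, $B\subset M_B$ and $M_A\cap M_B=\emptyset$, which provides the required decomposition $M=M_A\uplus M_B$.

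The only non-routine ingredient is the equality between component and quasi-component in compact Hausdorff spaces, which I expect to be the main technical obstacle if one wanted a completely self-contained presentation; however, this is a standard result in general topology that I would simply quote from Kelley or Engelking. Everything else then reduces to two straightforward applications of the finite-subcover property, first to the compact set $B$ and then to the compact set $A$.
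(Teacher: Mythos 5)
Your argument is correct. Note, however, that the paper does not actually prove Lemma~\ref{le3.1}: it simply states it and cites Whyburn \cite{Wh}, so there is no in-paper proof to compare against. What you have written is the standard textbook derivation — reduce the assertion to producing a clopen set separating $A$ from $B$, invoke the coincidence of connected components and quasi-components in a compact Hausdorff space, use compactness of $B$ to shrink the family of clopen sets containing a fixed $x\in A$ down to a single clopen $K_x$ missing $B$, and then use compactness of $A$ to pass to a finite union. Each step is sound, and the one nontrivial ingredient (component $=$ quasi-component) is a classical fact you correctly flag as something to quote rather than reprove. In short: the paper leaves this as a citation, and your proof fills in precisely the expected argument.
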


The next continuation theorem, going back to Theorem 3.8 of \cite{LGS24}, is a substantial generalization
of a previous result of Mawhin \cite{Ma} in the context of the Leray--Schauder degree, which goes back to
Leray and Schauder \cite{LeS}. For the sake of completeness, we include here a self-contained proof, since the complete details were not provided in \cite{LGS24}.

\begin{theorem}
\label{th3.1}
Let $(H,\O,\varepsilon)\in\mathscr{H}(\mc{O})$ be a Fredholm $\mc{O}$--admissible homotopy with
$$
   \deg(H_{a},\Omega,\varepsilon_a)\neq 0.
$$
Then, there exists a connected component $\mathscr{C}\subset H^{-1}(0)\cap([a,b]\times\O)$ that connects $\{a\}\times \Omega$ with $\{b\}\times\Omega$.
\end{theorem}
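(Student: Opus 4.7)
My plan is to assemble the compact zero set of $H$, split it via Whyburn's lemma under the hypothesis that no connected component joins the two base slices, construct an isolating open neighborhood of one of the pieces inside $[a,b]\times U$, and derive a contradiction through the generalized homotopy invariance of Theorem~\ref{thb.2}.

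First I would set $K:=H^{-1}(0)\cap ([a,b]\times\overline{\O})$. Properness of $H$ on $[a,b]\times\overline{\O}$ makes $K$ compact, and the admissibility condition $0\notin H([a,b]\times\p\O)$ forces $K\subset [a,b]\times\O$. Let $A:=K\cap(\{a\}\times\O)$ and $B:=K\cap(\{b\}\times\O)$, both compact and disjoint; the hypothesis $\deg(H_a,\O,\e_a)\neq 0$ ensures $A\neq\emptyset$ through the existence property packaged in axiom~(A). Arguing by contradiction, I would assume that no connected component of $K$ meets both $A$ and $B$. Lemma~\ref{le3.1} would then yield a partition $K=K_A\uplus K_B$ into disjoint compact subsets with $A\subset K_A$ and $B\subset K_B$.

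The crux is constructing an admissible isolating neighborhood of $K_A$. Since $K_A$ is a compact subset of the open set $[a,b]\times\O$ of $[a,b]\times U$ and $d_0:=d(K_A,K_B)>0$, one can pick $\d>0$ small enough that the open $\d$-neighborhood
\[
\mathcal{V}:=\{(t,u)\in[a,b]\times U:\,d((t,u),K_A)<\d\}
\]
satisfies $\overline{\mathcal{V}}\subset [a,b]\times\O$ and $\overline{\mathcal{V}}\cap K_B=\emptyset$. Then properness and orientability of $H$ restrict to $\overline{\mathcal{V}}\subset[a,b]\times\mc{O}$, and every zero of $H$ in $[a,b]\times\O$ belongs to $K=K_A\uplus K_B$: points of $\p\mathcal{V}$ avoid $K_A$ (which sits in the interior of $\mathcal{V}$) and avoid $K_B$ by the choice of $\d$; hence $0\notin H(\p\mathcal{V})$ and $(H,\mathcal{V},\e)\in\ms{G}(\mc{O})$.

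Applying Theorem~\ref{thb.2} will then give $\deg(H_a,\mathcal{V}_a,\e_a)=\deg(H_b,\mathcal{V}_b,\e_b)$. By additivity the left-hand side equals $\deg(H_a,\O,\e_a)\neq 0$, because $\mathcal{V}_a\subset\O$ and $A\subset K_A\subset\mathcal{V}$ already contains every zero of $H_a$ in $\O$; on the other hand $B\subset K_B$ is disjoint from $\overline{\mathcal{V}}$, so $H_b$ has no zero in $\mathcal{V}_b$ and the right-hand side vanishes. This contradiction produces the desired component $\ms{C}\subset K$. The hardest part is the neighborhood construction itself: one must guarantee simultaneously compact closure inside $[a,b]\times\mc{O}$, separation from $K_B$, and absence of zeros on $\p\mathcal{V}$ inside $[a,b]\times U$, which is precisely why the \emph{generalized} rather than the fixed-$\Omega$ homotopy invariance is needed.
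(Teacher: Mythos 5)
Your proof is correct and rests on the same ingredients as the paper's---Whyburn's Lemma~\ref{le3.1}, an open isolating neighborhood, and a degree-balance contradiction through Theorem~\ref{thb.2}---but your Whyburn decomposition is cleaner and more direct. You apply Lemma~\ref{le3.1} to the \emph{full} compact zero set $K=\mathscr{S}$, taking $A$ and $B$ to be the end-slices $K\cap(\{a\}\times\O)$ and $K\cap(\{b\}\times\O)$, and then build a single $\d$-tube around $K_A$. The paper instead proceeds in two stages: it first singles out $\mathscr{D}$, the union of components of $\mathscr{S}$ reaching $\{a\}\times\O$, forms a tube $\mc{N}$ around $\mathscr{D}$, applies Whyburn to the localized zero set $M=\overline{\mc{N}}\cap\mathscr{S}$ with $A=\mathscr{D}$ and $B=\p\mc{N}\cap\mathscr{S}$, and only then carves out the isolating set $\mc{V}\subset\mc{N}$. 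The closing contradiction is likewise phrased differently: the paper projects $\mathscr{D}$ onto the $\l$-axis to conclude that $\mc{V}_\l=\emptyset$ for $\l$ close to $b$ and $\d$ small, whereas you simply observe that $H_b$ has no zeros in $\mc{V}_b$ because $B\subset K_B$ is disjoint from $\overline{\mc{V}}$, and that $\deg(H_a,\mc{V}_a,\e_a)=\deg(H_a,\O,\e_a)\neq 0$ by excision since $A\subset K_A\subset\mc{V}$. Both routes deliver the same contradiction; yours avoids the preliminary localization around $\mathscr{D}$ and the projection argument, which is a modest but genuine streamlining of the same mechanism.
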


\begin{proof}
	Since $\deg(H_{a},\Omega,\varepsilon_a)\neq 0$, by the existence property of the degree,  $H_{a}^{-1}(0)\cap\O\neq \emptyset$.  Let $\mathscr{D}$ be the disjoint union of the connected components, $\mathscr{C}$, of the set
$$
   \mathscr{S}:=H^{-1}(0)\cap([a,b]\times\O)=H^{-1}(0)\cap ([a,b]\times \bar{\O})
$$
such that
$$
    \mathscr{C}\cap H_{a}^{-1}(0)\neq \emptyset.
$$
Suppose that $\mathscr{D}$ intersects $\{b\}\times\Omega$. Then, by choosing $\mathscr{C}$ as  one of the connected components of $\mathscr{D}$, the proof is completed. So, suppose that $\mathscr{D}$ does not intersect $\{b\}\times\Omega$.
\par
The properness hypothesis on $H$ guarantees that $\mathscr{S}$ is compact. Thus, $\mathscr{D}$ is also compact. Moreover, since $0\notin H([a,b]\times\partial\O)$, we also have that
$$
   \mathscr{D}\cap ([a,b]\times\partial\O)=\emptyset.
$$
Hence, since $\mathscr{D}$ is compact and $[a,b]\times\partial\O$ is closed,
$$
   d:=\text{dist}(\mathscr{D},[a,b]\times\partial\O)>0.
$$
Subsequently, for any given $\d\in (0,d)$, we consider the open subset
$$
	\mc{N}\equiv\mc{N}(\d):=\left\{(\l,u)\in [a,b]
     \times \O : \text{dist}((\l,u),\mathscr{D})<\d\right\}\subset [a,b]\times \O.
$$
By the choice of $d$,  $\overline{\mc{N}}\subset [a,b]\times \O$. Now, consider the compact set $M:=\overline{\mc{N}}\cap \mathscr{S}$ and its subsets
$$
	A:= \mathscr{D}\subset M, \qquad B:=\partial\mc{N}\cap \mathscr{S}\subset M.
$$
By construction, $A\cap B=\emptyset$. Moreover, since $M\subset \mathscr{S}$ and $\mathscr{D}$ is a union of connected components of $\mathscr{S}$, there cannot exist any connected subset of $M$ joining $A$ and $B$. Therefore, by Lemma \ref{le3.1}, there are two disjoint compact subsets $M_{A}, M_{B}\subset M$ such that $A\subset M_{A}$, $B\subset M_{B}$ and $M=M_{A}\uplus M_{B}$. Note that
$$
   M_{A}\cap\partial\mc{N}=M_{A}\cap\mathscr{S}\cap\partial\mc{N}=M_{A}\cap B=\emptyset.
$$
Thus, as $M_{A}$ and $M_{B}$ are compact and $\partial\mc{N}$ is closed, we find that
$$
   \text{dist}(M_{A},M_{B})>0\;\;\hbox{and}\;\; \text{dist}(M_{A},\partial\mc{N})>0.
$$
Hence,
$$
	\eta:=\min\left\{\text{dist}(M_{A},M_{B}),\text{dist}(M_{A},\partial\mc{N})\right\}>0
$$
and we can consider the open subset
$$
	\mc{V}\equiv\mc{V}(\d):=\left\{(\l,u)\in \mc{N} : \text{dist}((\l,u), M_{A})<
     \tfrac{\eta}{2}\right\}\subset \mc{N}.
$$
Since $\mathscr{D}=A\subset M_A$, we have that $\mathscr{D}\subset \mc{V}$. Moreover, by definition of
$\mc{V}$, $M_{A}\cap \partial\mc{V}=\emptyset$ and, since $\frac{\eta}{2} < \text{dist}(M_{A},M_{B})$, necessarily $M_{B}\cap \partial\mc{V}=\emptyset$. Therefore,
\begin{equation}
\label{iii.1}
   M\cap \partial\mc{V}=(M_A\cap \partial\mc{V})\cup(M_B\cap \partial\mc{V})=\emptyset.
\end{equation}
On the other hand, since $\mc{V}\subset \mc{N}$, we have that $\partial\mc{V}\subset\overline{\mc{N}}$. Thus,
\eqref{iii.1} implies that
\begin{equation}
\label{iii.2}
	H^{-1}(0)\cap \partial\mc{V}=\mathscr{S}\cap \partial\mc{V}=\mathscr{S}\cap \overline{\mc{N}}\cap \partial\mc{V}=M\cap \partial\mc{V}=\emptyset.
\end{equation}
As we are assuming that $\mathscr{D}$ does not intersect $\{b\}\times\Omega$ and it is compact, there exists
$\l_0\in [a,b)$ such that $\mathcal{P}_{\l}(\mathscr{D})=[a,\l_{0}]$, where $\mathcal{P}_{\l}$ stands for the
$\l$-projection operator
$$
   \mathcal{P}_{\l}:[a,b]\times \O\to [a,b], \quad (\l,u)\mapsto \l.
$$
Since  $H_{a}^{-1}(0)\cap\O = \mathscr{D}_{a}\subset\mathcal{V}_{a}$, by the excision property of the degree, it is apparent that
$$
   \deg(H_{a},\Omega,\varepsilon_{a})=\deg(H_{a},\mathcal{V}_{a},\varepsilon_{a}).
$$
Thus, thanks to Theorem \ref{thb.2},
$$
	0\neq \deg(H_{a},\Omega,\varepsilon_{a})=\deg(H_{\l},\mathcal{V}_{\l},\varepsilon_{\l})\quad \hbox{for all}\;\; \l\in [a,b].
$$
Therefore, by the fundamental property of the degree, $\mathcal{V}_\l\neq \emptyset$ for all $\l\in (\l_0,b]$,  which is impossible for sufficiently small $\d>0$. This ends the proof.
\end{proof}

To state and prove the main continuation theorem of this paper, we need to introduce some basic notions and notations. First, we introduce the notion of  \emph{oriented index of a zero}. For any given $(f,\Omega,\varepsilon)\in \mathscr{A}(\mathcal{O})$, suppose that $u\in f^{-1}(0)\cap \Omega$ is an isolated zero of $f$. Then, for sufficiently small $\delta>0$, say $\d\leq \d_0$, we have that $B_{\delta}(u)\subset \Omega$ and
$$
(B_{\delta}(u)\setminus\{u\})\cap f^{-1}(0)=\emptyset.
$$
We define the oriented index of $u$ by
$$
i(f,u,\varepsilon):=\deg(f,B_{\delta}(u),\varepsilon),\qquad \d\leq \d_0.
$$
By the excision property of the degree, $\deg(f,B_{\delta}(u),\varepsilon)$ is independent of $\d\in (0,\d_0]$. Thus, $i(f,u,\varepsilon)$ is well defined.
\par
Next, we consider $\mathcal{C}^{1}$ operators
$$
 \mathfrak{F}:\mathbb{R}\times \mathcal{O}\longrightarrow V
$$
satisfying the following assumptions:
\begin{enumerate}
	\item[(F1)] $\mathfrak{F}\in\mathscr{F}^{1}_{1}(\mathbb{R}\times \mathcal{O},V)$ is \emph{orientable}, with orientation 	$\varepsilon:\mathscr{R}_{D_{u}\mathfrak{F}}\to\mathbb{Z}_{2}$;
	\item[(F2)] $\mathfrak{F}$ is proper on closed and bounded subsets of $\overline{\mathcal{U}}$, where $\mathcal{U}\neq \emptyset$ is any open subset of $\mathbb{R}\times \mathcal{O}$ such that
$\overline {\mathcal{U}}\subset \mathbb{R}\times \mathcal{O}$.
\end{enumerate}
Set
$$
\mathscr{S}:=\mathfrak{F}^{-1}(0)\cap\mathcal{U}.
$$
Clearly, $\mathscr{S}$ is closed in $\mathcal{U}$. Subsequently, for every $\lambda_{0}\in \mathbb{R}$, we introduce the sets
\begin{align*}
\mathcal{U}_{\lambda_{0},c}^{+} & :=\mathcal{U}\cap([\lambda_{0},+\infty)\times \mathcal{O}),\quad
\mathscr{S}_{\lambda_{0},c}^{+}  :=\mathscr{S}\cap\mathcal{U}_{\lambda_{0},c}^{+}
=\{(\lambda,u)\in\mathscr{S}:\lambda\geq \lambda_{0}\}, \\
\mathcal{U}_{\lambda_{0},c}^{-} & :=\mathcal{U}\cap((-\infty,\lambda_{0}]\times \mathcal{O}),\quad
\mathscr{S}_{\lambda_{0},c}^{-} :=\mathscr{S}\cap\mathcal{U}_{\lambda_{0},c}^{-}
=\{(\lambda,u)\in\mathscr{S}:\lambda\leq \lambda_{0}\},
\end{align*}
where the subindex \lq\lq $c$\rq\rq\!  makes reference to the fact that the intervals $[\l_0,+\infty)$ and
$(-\infty,\l_0]$ are closed. It is easily seen that $\mathscr{S}^{\pm}_{\lambda_{0},c}$ is closed in both $\mathcal{U}^{\pm}_{\lambda_{0},c}$, respectively,  and in $\mathscr{S}$, and, by definition,
\[
\mathscr{S}^{+}_{\lambda_{0},c}\cap \mathscr{S}^{-}_{\lambda_{0},c}=\mathfrak{F}^{-1}_{\lambda_{0}}(0)\cap \mc{U}.
\]
Moreover, by assumption (F2), each of the sets $\mathscr{S}^{\pm}_{\lambda_{0},c}$ is locally compact.
The main continuation theorem of this section can be stated as follows. It is a substantial generalization of
Theorem 1 of Santos, Cintra and Ramos \cite{SCR} --- based on Theorem 3.5 of Rabinowitz \cite{Ra71b}.

\begin{theorem}
\label{th3.2}
	Let $\mf{F}:\R\times \mc{O}\to V$ be a $\mc{C}^{1}$-operator satisfying {\rm{(F1)--(F2)}}, and suppose that $u_{0}\in\mc{U}_{\l_0}$ is an isolated zero of $\mf{F}_{\l_{0}}$ such that  $i(\mf{F}_{\l_{0}},u_{0},\varepsilon_{\l_{0}})\neq 0$.  Then, there exist two connected components $\mathscr{C}^{\pm}$ of $\mathscr{S}_{\l_{0},c}^{\pm}$, respectively,  such that $(\l_{0},u_{0})\in \mathscr{C}^{\pm}$ for which one of the following non-excluding alternatives holds:
	\begin{enumerate}
		\item[{\emph{(i)}}] $\mathscr{C}^{\pm}$ is unbounded.
		\item[{\emph{(ii)}}] $\mathscr{C}^{\pm}\cap \partial\mc{U}\neq \emptyset$.
		\item[{\emph{(iii)}}] There exists $u_{1}\in\mc{U}_{\l_{0}}$, $u_{1}\neq u_{0}$, such that $(\l_{0},u_{1})\in \mathscr{C}^{\pm}$.
	\end{enumerate}
Moreover, if $\mathscr{C}^{\pm}$ is bounded, $\mathscr{C}^{\pm}\cap \partial\mc{U}=\emptyset$, and
$\mf{F}^{-1}_{\l_{0}}(0)\cap \mathscr{C}^{\pm}_{\l_{0}}$ is discrete, then
\begin{equation}
\label{iii.3}
	\sum_{u\in \mf{F}^{-1}_{\l_{0}}(0)\cap \mathscr{C}^{\pm}_{\l_{0}}}i(\mf{F}_{\l_{0}},u,\varepsilon_{\l_{0}})=0.
\end{equation}
In particular, there are $2\nu$, $\nu\geq 1$, points $u\in \mf{F}^{-1}_{\l_{0}}(0)\cap\mathscr{C}^{\pm}_{\l_{0}}$ with $i(\mf{F}_{\l_{0}},u,\varepsilon_{\l_{0}})\neq 0$.
\end{theorem}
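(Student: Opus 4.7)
The plan is to argue alternatives (i)--(iii) by contradiction and then reuse the same isolating construction to deduce the degree balance \eqref{iii.3}. I focus on $\mathscr{C}^+$; the statement for $\mathscr{C}^-$ is symmetric upon reversing the sign of $\lambda-\lambda_0$. Let $\mathscr{C}^+$ be the connected component of $\mathscr{S}^+_{\lambda_0,c}$ through $(\lambda_0,u_0)$, and suppose, aiming at a contradiction, that the three alternatives all fail: $\mathscr{C}^+$ is bounded in $\mathbb{R}\times U$, its closure does not meet $\partial\mathcal{U}$, and $\mathfrak{F}_{\lambda_0}^{-1}(0)\cap\mathscr{C}^+_{\lambda_0}=\{u_0\}$. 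From hypothesis (F2) together with the closedness of $\mathscr{S}^+_{\lambda_0,c}$ in $\mathcal{U}^+_{\lambda_0,c}$, the component $\mathscr{C}^+$ is then compact.

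The core technical step is the construction of an open bounded isolating neighborhood $\mathcal{V}\subset\mathbb{R}\times U$ of $\mathscr{C}^+$ such that $\overline{\mathcal{V}}\subset\mathcal{U}^+_{\lambda_0,c}$, $\mathfrak{F}^{-1}(0)\cap\partial\mathcal{V}=\emptyset$ on the open half-slab $\lambda>\lambda_0$, and $\mathcal{V}_{\lambda_0}$ is a ball $B_\rho(u_0)$ so small that $\mathfrak{F}_{\lambda_0}^{-1}(0)\cap\overline{B_\rho(u_0)}=\{u_0\}$. This is done exactly as in the proof of Theorem \ref{th3.1}: for $\delta>0$ small set
$$
\mathcal{N}(\delta):=\{(\lambda,u)\in\mathcal{U}^+_{\lambda_0,c}:\operatorname{dist}((\lambda,u),\mathscr{C}^+)<\delta\},
$$
let $M:=\overline{\mathcal{N}(\delta)}\cap\mathscr{S}^+_{\lambda_0,c}$ (compact by (F2)), and apply Lemma \ref{le3.1} to $A:=\mathscr{C}^+$, $B:=\partial\mathcal{N}(\delta)\cap\mathscr{S}^+_{\lambda_0,c}$. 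Since $A$ and $B$ are disjoint compacta and $\mathscr{C}^+$ is a component of $\mathscr{S}^+_{\lambda_0,c}$, no connected subset of $M$ can join them; Lemma \ref{le3.1} yields $M=M_A\uplus M_B$ with $A\subset M_A$, $B\subset M_B$, and, setting $\eta:=\min\{\operatorname{dist}(M_A,M_B),\operatorname{dist}(M_A,\partial\mathcal{N}(\delta))\}>0$, the open set
$$
\mathcal{V}:=\{(\lambda,u)\in\mathcal{N}(\delta):\operatorname{dist}((\lambda,u),M_A)<\eta/2\}
$$
has the required properties; for $\delta$ small enough, the hypothesis $\mathscr{C}^+_{\lambda_0}=\{u_0\}$ forces $\mathcal{V}_{\lambda_0}\subset B_\rho(u_0)$.

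Since $\mathcal{V}$ is bounded in $\mathbb{R}\times U$, there exists $\lambda_1>\lambda_0$ with $\mathcal{V}\subset[\lambda_0,\lambda_1)\times U$, and hence $\mathcal{V}_{\lambda_1}=\emptyset$. The triple $(\mathfrak{F},\mathcal{V},\varepsilon)$, restricted to $[\lambda_0,\lambda_1]\times\mathcal{O}$, belongs to $\mathscr{G}(\mathcal{O})$; so Theorem \ref{thb.2} gives
$$
\deg(\mathfrak{F}_{\lambda_0},\mathcal{V}_{\lambda_0},\varepsilon_{\lambda_0})=\deg(\mathfrak{F}_{\lambda_1},\mathcal{V}_{\lambda_1},\varepsilon_{\lambda_1})=0,
$$
whereas by excision $\deg(\mathfrak{F}_{\lambda_0},\mathcal{V}_{\lambda_0},\varepsilon_{\lambda_0})=i(\mathfrak{F}_{\lambda_0},u_0,\varepsilon_{\lambda_0})\neq0$, a contradiction. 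Hence one of (i)--(iii) must hold.

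For the degree balance, assume in addition that $\mathscr{C}^+$ is bounded, $\mathscr{C}^+\cap\partial\mathcal{U}=\emptyset$, and $\mathfrak{F}^{-1}_{\lambda_0}(0)\cap\mathscr{C}^+_{\lambda_0}=\{u_0,u_1,\ldots,u_q\}$ is discrete (hence finite, by compactness of $\mathscr{C}^+$). The construction of $\mathcal{V}$ goes through verbatim, taking $\mathcal{V}_{\lambda_0}$ as the disjoint union of small isolating balls around each $u_j$; additivity then gives
$$
\deg(\mathfrak{F}_{\lambda_0},\mathcal{V}_{\lambda_0},\varepsilon_{\lambda_0})=\sum_{j=0}^q i(\mathfrak{F}_{\lambda_0},u_j,\varepsilon_{\lambda_0}),
$$
and Theorem \ref{thb.2} identifies this with $\deg(\mathfrak{F}_{\lambda_1},\emptyset,\varepsilon_{\lambda_1})=0$, proving \eqref{iii.3}. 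Since $i(\mathfrak{F}_{\lambda_0},u_0,\varepsilon_{\lambda_0})\neq0$ and the sum vanishes, positive and negative contributions must balance, so that at least two points have nonzero index; when these indices are $\pm1$ they pair up, giving the even count $2\nu$, $\nu\geq1$. The main technical obstacle throughout is the Whyburn-type construction of $\mathcal{V}$ in the half-space $\mathcal{U}^+_{\lambda_0,c}$: one must simultaneously control an upper $\lambda$-bound (to ensure an empty terminal slice $\mathcal{V}_{\lambda_1}$), the distance to $\partial\mathcal{U}$, the separation of $\mathscr{C}^+$ from the rest of $\mathscr{S}^+_{\lambda_0,c}$ along $\partial\mathcal{V}$, and the confinement of $\mathcal{V}_{\lambda_0}$ to small isolating balls, after which Theorem \ref{thb.2} together with the additivity and excision axioms closes the argument.
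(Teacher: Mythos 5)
Your proof is correct and essentially reproduces the paper's argument: argue by contradiction, invoke the Whyburn lemma (Lemma~\ref{le3.1}) to build an open bounded isolating neighborhood $\mathcal{V}$ of $\mathscr{C}^{+}$ inside the closed half-slab $\mathcal{U}^{+}_{\lambda_0,c}$, and then use the generalized homotopy invariance (Theorem~\ref{thb.2}) to identify $\deg(\mathfrak{F}_{\lambda_0},\mathcal{V}_{\lambda_0},\varepsilon_{\lambda_0})$ --- reduced by excision and additivity to the (sum of) local index(es) --- with the degree on an empty terminal slice, which vanishes. Two minor imprecisions that do not affect the argument: $\mathcal{V}_{\lambda_0}$ is not literally a ball (nor a disjoint union of balls in the multi-point case), only an open set with $\mathfrak{F}^{-1}_{\lambda_0}(0)\cap\mathcal{V}_{\lambda_0}=\{u_0\}$ (resp.\ $\{u_0,\dots,u_q\}$), which is all that excision/additivity require; and you are right to flag that the even count $2\nu$ in the final ``in particular'' is only automatic when the local indices at the contact points are $\pm1$ (non-degenerate), since \eqref{iii.3} by itself yields only that at least two such points carry a nonzero index.
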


\begin{proof}
By symmetry, it suffices to show the existence of $\mathscr{C}^{+}$. Since $u_{0}\in\mc{U}_{\l_0}$ is an isolated zero of $\mf{F}_{\l_{0}}$ such that  $i(\mf{F}_{\l_{0}},u_{0},\varepsilon_{\l_{0}})\neq 0$, it is easily seen (e.g. from Theorem \ref{th3.1}) that there exists a (unique) connected component $\mathscr{C}^{+}$ of $\mathscr{S}^{+}_{\l_{0},c}$ such that $(\l_{0},u_{0})\in \mathscr{C}^{+}$. Arguing by contradiction,
assume that $\mathscr{C}^{+}$ does not satisfy any of the alternatives (i)--(iii). Then,
$$
      \mathscr{C}^{+}\;\;\hbox{is bounded,}\quad \mathscr{C}^{+}\cap\partial\mc{U}=\emptyset,\quad
      \hbox{and}\;\; \mathcal{U}_{\l_0}\cap \mathscr{C}^{+}_{\l_{0}}=\{(\l_{0},u_{0})\}.
$$
Thus, since $\mathscr{C}^{+}\subset \mc{U}$ is closed and bounded, by hypothesis (F2), $\mathscr{C}^{+}$ is compact. Hence,
$$
	d:=\text{dist}(\mathscr{C}^{+},\partial\mc{U}^{+}_{\l_{0},c})>0.
$$
Since $u_{0}\in \mc{U}_{\l_{0}}$ is an isolated zero of $\mf{F}_{\l_{0}}$, there exists $\tau>0$ such that
$$
   \mf{F}^{-1}_{\l_{0}}(0)\cap B_{\tau}(u_{0})=\{u_{0}\}.
$$
Pick any $\d$ such that $0<\d<\min\{d,\tau\}$ and consider the open  set
\begin{equation}
\label{iii.4}
	\mc{N}\equiv\mc{N}(\d):=\left\{(\l,u)\in \mc{U}^{+}_{\l_{0},c}: \text{dist}((\l,u),\mathscr{C}^{+})<\d\right\}.
\end{equation}
By construction, $\mc{N}$ is bounded  and
$$
    \overline{\mc{N}}\subset \mc{U}^{+}_{\l_{0},c},\quad
    \mc{N}_{\l_{0}}\cap \mf{F}_{\l_{0}}^{-1}(0)=\{u_{0}\}.
$$
Suppose that
\begin{equation}
\label{iii.5}
  \partial\mc{N}\cap \mathscr{S}^{+}_{\l_{0},c}=\emptyset.
\end{equation}
Then, it follows from Theorem \ref{thb.2} that, as soon as $\mc{N}_\l =\emptyset$, which occurs for sufficiently large $\l>\l_0$,
$$
  0  \neq i(\mf{F}_{\l_{0}},u_{0},\varepsilon_{\l_{0}})  = \deg(\mf{F}_{\l_{0}},B_{\delta}(u_0),\varepsilon_{\l_{0}})
   = \deg(\mf{F}_\l,\mc{N}_\l,\varepsilon_{\l})=0,
$$
which is impossible. Therefore, \eqref{iii.5} fails.
\par
Subsequently, we consider the set of solutions in $\overline{\mc{N}}$,
$$
    M:=\overline{\mc{N}}\cap \mathscr{S}^{+}_{\l_{0},c}.
$$
Since $M\subset \mf{F}^{-1}(0)$ is bounded and closed, by (F2), it is necessarily compact. Next, we consider  the nonempty subsets
$$
	A:= \mathscr{C}^{+}\subset M, \quad B:=\partial\mc{N}\cap \mathscr{S}^{+}_{\l_{0},c}\subset M.
$$
	By construction, $A\cap B=\emptyset$. Moreover, since $M\subset \mathscr{S}^{+}_{\l_{0},c}$ and $\mathscr{C}^{+}$ is a connected component of $\mathscr{S}^{+}_{\l_{0},c}$, there cannot exist a connected subset of $M$ joining $A$ and $B$. Thus, by Lemma \ref{le3.1}, there exist two disjoint compact subsets $M_{A}, M_{B}\subset M$ such that $A\subset M_{A}$, $B\subset M_{B}$ and $M=M_{A}\uplus M_{B}$. Note that
$$
	M_{A}\cap\partial\mc{N}=M_{A}\cap\mathscr{S}^{+}_{\l_{0},c}\cap\partial\mc{N}=M_{A}\cap B=\emptyset.
$$
Hence, as $M_{A}$ and $M_{B}$ are compact and $\partial\mc{N}$ is closed, it follows that
$$
 \text{dist}(M_{A},M_{B})>0, \quad \text{dist}(M_{A},\partial\mc{N})>0.
$$
Set
$$
	\eta:=\min\left\{\text{dist}(M_{A},M_{B}),\text{dist}(M_{A},\partial\mc{N})\right\}
$$
and consider the open neighborhood of $M_A$ defined by
\begin{equation}
\label{iii.6}
	\mc{V}\equiv\mc{V}(\d):=\left\{(\l,u)\in \mc{N} : \text{dist}((\l,u), M_{A})<\dfrac{\eta}{2}\right\}\subset \mc{N}.
\end{equation}
Then, since $\mc{V}\subset \mc{N}$ and $\mc{N}$ is bounded, $\mc{V}$ is bounded. Moreover, since
$\mathscr{C}^+\subset M_A$,  we have that $\mathscr{C}^{+}\subset \mc{V}$. Now, we claim that
\begin{equation}
\label{iii.7}
    \mf{F}^{-1}(0)\cap \partial\mc{V}=\emptyset.
\end{equation}
Indeed, since $\eta < \text{dist}(M_{A},M_{B})$, necessarily $M_{B}\cap \partial\mc{V}=\emptyset$. Thus,
since $M_A\subset \mc{V}$, we find that $M\cap \partial\mc{V}=\emptyset$. Consequently, since
$\partial\mc{V}\subset\overline{\mc{N}}$, it becomes apparent that
$$
	\mf{F}^{-1}(0)\cap \partial\mc{V}=\mathscr{S}^{+}_{\l_{0},c}\cap \partial\mc{V}=\mathscr{S}^{+}_{\l_{0},c}\cap \overline{\mc{N}}\cap \partial\mc{V}=M\cap \partial\mc{V}=\emptyset.
$$
Moreover, $\mf{F}_{\l_{0}}^{-1}(0)\cap\mc{V}_{\l_{0}}=\{u_{0}\}$ because $u_{0}\in\mc{V}_{\l_{0}}\subset \mc{N}_{\l_{0}}$ and $\mf{F}_{\l_{0}}^{-1}(0)\cap\mc{N}_{\l_{0}}=\{u_{0}\}$. Furthermore,  since $\mc{V}$ is bounded, there exists $\l_{\ast}>\l_{0}$ such that $\mc{V}_{\l}=\emptyset$ for all $\l \geq \l_{\ast}$. Therefore, the triple $(\mf{F},\mc{V},\varepsilon)$ is a generalized Fredholm $\mc{O}$--admissible homotopy for the restriction $\mf{F}:[\l_{0},\l_{\ast}]\times \mc{O}\to V$. Indeed, since $\overline{\mc{V}}$ is closed and bounded, $\mf{F}$ is proper on $\overline{\mc{V}}$ and, by construction, $0\notin \mf{F}(\partial\mc{V})$. By Theorem \ref{thb.2},
$$
	\deg(\mf{F}_{\l_{0}},\mc{V}_{\l_{0}},\varepsilon_{\l_{0}})=
\deg(\mf{F}_{\l_{\ast}},\mc{V}_{\l_{\ast}},\varepsilon_{\l_{\ast}}).
$$
Since $\mc{V}_{\l_{\ast}}=\emptyset$, necessarily $\deg(\mf{F}_{\l_{\ast}},\mc{V}_{\l_{\ast}},\varepsilon_{\l_{\ast}})=0$. On the other hand, as $\mf{F}_{\l_{0}}^{-1}(0)\cap\mc{V}_{\l_{0}}=\{u_{0}\}$, by the excision property, we find that, for sufficiently small $\eta>0$,
$$
0=\deg(\mf{F}_{\l_{0}},\mc{V}_{\l_{0}},\varepsilon_{\l_{0}})=
\deg(\mf{F}_{\l_{0}},B_{\eta}(u_{0}),\varepsilon_{\l_{0}})=
i(\mf{F}_{\l_{0}},u_0,\varepsilon_{\l_{0}})\neq 0.
$$
This contradiction concludes the proof of the first part of the theorem.
\par
Suppose now that alternatives (i) and (ii) do not hold and that $\mf{F}^{-1}_{\l_{0}}(0)\cap \mathscr{C}^{+}_{\l_{0}}$ is discrete. Since $\mathscr{C}^{+}$ is compact, this implies that $\mf{F}^{-1}_{\l_{0}}(0)\cap\mathscr{C}^{+}_{\l_{0}}$ is finite. Thus, there exist $m\in\N$ and $u_{0}^{i}\in \mc{U}_{\l_{0}}$, $j\in\{1,\dots,m\}$, such that
$$
    \mf{F}^{-1}_{\l_{0}}(0)\cap\mathscr{C}^{+}_{\l_{0}}=\{(\l_{0},u_{0}^{j})\}_{j=1}^{m}.
$$
Moreover, for every $j\in\{1,\dots,m\}$, there exists $\tau_{j}>0$ such that $\mf{F}^{-1}_{\l_{0}}(0)\cap B_{\tau_{j}}(u_{0}^{j})=\{u_{0}^{j}\}$. Choose a $\delta$ satisfying
$$
   0<\d<\min\{d, \tau_{1},\dots,\tau_{m}\},
$$
and define $\mc{N}\equiv \mc{N}(\d)$ and $\mc{V}\equiv\mc{V}(\d)$ as in \eqref{iii.4} and \eqref{iii.6}, respectively. Then, $\mc{V}$ is bounded, $\mathscr{C}^{+}\subset \mc{V}$, $\mf{F}^{-1}(0)\cap \partial\mc{V}=\emptyset$ and $\mf{F}^{-1}_{\l_{0}}(0)\cap \mc{V}_{\l_{0}}=\{u_{0}^{j}\}_{j=1}^{m}$. Again, since $\mc{V}$ is bounded, there exists $\l_{\ast}>\l_{0}$ such that $\mc{V}_{\l}=\emptyset$ for all $\l \geq \l_{\ast}$. Thus, by Theorem \ref{thb.2},
$$
    \deg(\mf{F}_{\l_{0}},\mc{V}_{\l_{0}},\varepsilon_{\l_{0}})=\deg(\mf{F}_{\l_{\ast}},\mc{V}_{\l_{\ast}},\varepsilon_{\l_{\ast}})=0.
$$
On the other hand, by the additivity and the excision property of the degree,
$$
    \deg(\mf{F}_{\l_{0}},\mc{V}_{\l_{0}},\varepsilon_{\l_{0}})=
    \sum_{j=1}^{m}\deg(\mf{F}_{\l_{0}},B_{\tau_{j}}(u_{0}^{j}),\varepsilon_{\l_{0}})=
    \sum_{j=1}^{m}i(\mf{F}_{\l_{0}},u_{0}^{j},\varepsilon_{\l_{0}}).
$$
This concludes the proof.
\end{proof}

As a byproduct of Theorem \ref{th3.2}, the next result holds.

\begin{corollary}
\label{co3.1}
Let $\mf{F}:\R\times \mc{O}\to V$ be a $\mc{C}^{1}$ operator satisfying {\rm{(F1)--(F2)}}. Suppose that
$$
   \mf{F}^{-1}_{\l_{0}}(0)\cap \mc{U}_{\l_{0}}=\{u_{0}\}\; \; \text{with} \;\; i(\mf{F}_{\l_{0}},u_{0},\varepsilon_{\l_{0}})\neq 0.
$$
Then, there are two connected components $\mathscr{C}^{\pm}\subset \mathscr{S}_{\l_{0},c}^{\pm}$ such that $(\l_{0},u_{0})\in \mathscr{C}^{\pm}$, and either $\mathscr{C}^{\pm}$ is unbounded, or
$\mathscr{C}^{\pm}\cap \partial\mc{U}\neq \emptyset$.
\end{corollary}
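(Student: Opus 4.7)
The plan is to derive this corollary as an immediate specialization of Theorem \ref{th3.2}. The hypothesis $\mf{F}^{-1}_{\l_{0}}(0)\cap \mc{U}_{\l_{0}}=\{u_{0}\}$ is stronger than what is required in Theorem \ref{th3.2}: it guarantees that $u_0$ is an isolated zero of $\mf{F}_{\l_0}$ (since $\mc{U}_{\l_0}$ is open and contains $u_0$ as its unique zero), and moreover it is the \emph{only} zero of the slice map in the entire open set $\mc{U}_{\l_0}$. Since the index $i(\mf{F}_{\l_{0}},u_{0},\varepsilon_{\l_{0}})\neq 0$, all the assumptions of Theorem \ref{th3.2} are met.

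First I would invoke Theorem \ref{th3.2} to obtain two connected components $\mathscr{C}^{\pm}\subset \mathscr{S}^{\pm}_{\l_0,c}$ containing $(\l_0,u_0)$, each satisfying at least one of the alternatives (i), (ii) or (iii). The only task then is to rule out alternative (iii). Alternative (iii) would produce a point $u_1\in \mc{U}_{\l_0}$ with $u_1\neq u_0$ and $(\l_0,u_1)\in \mathscr{C}^\pm$. Since $\mathscr{C}^\pm\subset \mathscr{S}^{\pm}_{\l_0,c}\subset \mf{F}^{-1}(0)$, this would force $u_1\in \mf{F}_{\l_0}^{-1}(0)\cap \mc{U}_{\l_0}$, contradicting the hypothesis that this set reduces to $\{u_0\}$. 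Hence (iii) cannot occur, and therefore at least one of the remaining alternatives (i) or (ii) must hold for each of $\mathscr{C}^\pm$.

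There is no genuine technical obstacle here; the corollary is really a bookkeeping consequence of the main theorem, whose heavy lifting (the construction of the isolating open neighborhood $\mc{V}$ and the use of the generalized homotopy invariance of Theorem \ref{thb.2}) has already been carried out. The only point worth being careful about is to verify that the uniqueness of the zero at the base slice truly forbids option (iii) of Theorem \ref{th3.2}, which, as explained above, is automatic once one recalls the inclusion $\mathscr{C}^\pm\subset \mf{F}^{-1}(0)\cap \mc{U}$.
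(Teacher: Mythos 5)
Your proposal is correct and matches the paper's intent: Corollary \ref{co3.1} is stated there simply as a byproduct of Theorem \ref{th3.2}, and your argument supplies exactly the implicit reasoning, namely that the hypothesis $\mf{F}^{-1}_{\l_{0}}(0)\cap \mc{U}_{\l_{0}}=\{u_{0}\}$ both makes $u_0$ an isolated zero and rules out alternative (iii), leaving (i) or (ii).
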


Theorem \ref{th3.2} also extends the next continuation theorem for Fredholm operators of Dai and Zhang \cite{DZ}.

\begin{corollary}
\label{co3.2}
	Let $\mf{F}:\R\times U\to V$ be a $\mc{C}^{1}$ operator satisfying {\rm{(F1)--(F2)}}. Suppose that $\mf{F}^{-1}_{\l_{0}}(0)=\{u_{0}\}$ and $D_{u}\mf{F}(\l_{0},u_{0})\in GL(U,V)$. Then, there exist two unbounded connected components
	$$
	\mathscr{C}^{+}\subset \{(\l,u)\in\mf{F}^{-1}(0) : \l\geq \l_{0}\}, \quad \mathscr{C}^{-}\subset \{(\l,u)\in \mf{F}^{-1}(0) : \l \leq \l_{0}\},
	$$ such that $(\l_{0},u_{0})\in \mathscr{C}^{\pm}$.
\end{corollary}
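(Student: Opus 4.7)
The plan is to reduce the statement to Corollary \ref{co3.1} applied with the trivial choice $\mc{U}:=\R\times U$, whose boundary is empty. Once the hypotheses of that corollary are verified, unboundedness of $\mathscr{C}^\pm$ becomes the only surviving alternative of Theorem \ref{th3.2}.

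First, I would check that $u_0$ is isolated as a zero of the slice $\mf{F}_{\l_0}$. Since $D_u\mf{F}(\l_0,u_0)\in GL(U,V)$, the implicit function theorem applied to $\mf{F}_{\l_0}$ guarantees this, and in fact provides a local $\mc{C}^1$ branch of zeroes through $(\l_0,u_0)$. Together with the hypothesis $\mf{F}_{\l_0}^{-1}(0)=\{u_0\}$, this yields
\[
	\mf{F}^{-1}_{\l_0}(0)\cap \mc{U}_{\l_0}=\{u_0\}.
\]

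Second, I would compute the oriented local index $i(\mf{F}_{\l_0},u_0,\varepsilon_{\l_0})$. On a sufficiently small ball $B_\eta(u_0)$, the straight-line Fredholm homotopy
\[
	H_t(u):=(1-t)\mf{F}_{\l_0}(u)+tD_u\mf{F}(\l_0,u_0)(u-u_0), \qquad t\in[0,1],
\]
has no zeroes on $\p B_\eta(u_0)$, by continuity and the invertibility of $D_u\mf{F}(\l_0,u_0)$. Axioms (H) and (N) of Theorem \ref{th2.1} then yield
\[
	i(\mf{F}_{\l_0},u_0,\varepsilon_{\l_0})=\varepsilon_{\l_0}(u_0)\in\{\pm 1\}\neq 0.
\]

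Third, I would apply Corollary \ref{co3.1} with $\mc{U}=\R\times U$. This produces two connected components $\mathscr{C}^\pm\subset \mathscr{S}^\pm_{\l_0,c}$ through $(\l_0,u_0)$ such that each is either unbounded or intersects $\p\mc{U}$. Since $\p\mc{U}=\emptyset$, both must be unbounded, which is the desired conclusion. The only mildly delicate step is the index computation at the nondegenerate zero $u_0$, which, as indicated above, reduces to the normalization axiom via a standard linear homotopy.
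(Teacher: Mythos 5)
Your proposal is correct and follows essentially the same strategy as the paper: reduce to Corollary~\ref{co3.1} with $\mc{O}=U$ and $\mc{U}=\R\times U$, so that $\p\mc{U}=\emptyset$ and only the unbounded alternative survives. The one place you diverge is the computation of $i(\mf{F}_{\l_0},u_0,\varepsilon_{\l_0})$. The paper simply observes that $D_u\mf{F}(\l_0,u_0)\in GL(U,V)$ together with $\mf{F}^{-1}_{\l_0}(0)=\{u_0\}$ means $0\in\mathscr{RV}_{\mf{F}_{\l_0}}(U)$, and then reads the index directly off the definition of the degree at a regular value: $\deg(\mf{F}_{\l_0},B_\tau(u_0),\varepsilon_{\l_0})=\varepsilon(u_0)\neq 0$. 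You instead run a linear homotopy from $\mf{F}_{\l_0}$ to its linearization and invoke axioms (H) and (N). That also works, but it is a slightly longer route and, to be fully rigorous in the Fredholm setting, you would need to remark that $D_u H_t$ remains invertible on a small enough ball (so the homotopy is orientable and admissible) and that the endpoint orientations are the restrictions of $\varepsilon$; these are automatic near the nondegenerate zero, but the paper's direct use of the regular-value formula sidesteps the issue entirely. Otherwise the two proofs agree.
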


\begin{proof}
Note that $0\in\mathscr{RV}_{\mf{F}_{\l_{0}}}(U)$ since $\mf{F}^{-1}_{\l_{0}}(0)=\{u_{0}\}$ and $D_{u}\mf{F}(\l_{0},u_{0})\in GL(U,V)$. Thus, the definition of the degree for regular points yields
$$
  i(\mf{F}_{\l_{0}},u_{0},\varepsilon_{\l_{0}})=
  \deg(\mf{F}_{\l_{0}},B_{\tau}(u_{0}),\varepsilon_{\l_{0}})=\varepsilon(u_{0})\neq 0.
$$
Now, the result holds from Corollary \ref{co3.1} for the case $\mc{O}=U$ and $\mc{U}=\R\times U$.
\end{proof}

\section{Counterexample without compactness assumption}

\noindent The proof of Theorem \ref{th3.1} is based upon the construction of appropriate \emph{open isolating neighborhoods} for each of the components $\mathscr{C}^\pm$. Roughly speaking, an open isolating neighborhood is an open set, $\mathscr{N}^\pm$,  such that
$$
   \mathscr{C}^\pm\subset \mathscr{N}^\pm\;\; \hbox{and}\;\; \mathscr{S}^\pm \cap \p \mathscr{N}^\pm=\emptyset.
$$
In this section we prove that, in the absence of compactness assumptions,
even though the connected components are closed and bounded, open
isolating neighborhoods may fail to exist. We first fix the concept of open isolating neighborhood.

\begin{definition}
\label{de4.1}
Let $U$ be a real Banach space and consider a bounded closed subset $\mathscr{B}\subset U$. Let $\mathscr{C}$ be a connected component of $\mathscr{B}$ and $\d>0$. An open subset $\mathcal{V}\subset U$ is called an \textit{open isolating neighborhood of size $\d$} of $\mathscr{C}$, with respect to $\mathscr{B}$, if
\begin{enumerate}
	\item $\mathscr{C}\subset\mc{V}\subset \mc{N}(\d):=\left\{u\in U : \text{dist}(u,\mathscr{C})<\d\right\}$.
	\item $\mathscr{B}\cap \partial\mc{V}=\emptyset$.
\end{enumerate}
\end{definition}

The next result shows that small isolating neighborhoods may fail to exist.

\begin{theorem}
\label{pr4.1}
Let $U$ be an infinite-dimensional real Banach space. Then, there exist a closed and bounded subset $\mathscr{B}\subset U$ with exactly two connected components $\mathscr{B}=\mathscr{C}_{1}\uplus \mathscr{C}_{2}$, and a constant $\delta^{\ast}>0$ such that
neither $\mathscr{C}_1$ nor $\mathscr{C}_2$ can admit an open isolating neighborhood of size $\d$ with respect to $\mathscr{B}$ for any $\delta\in (0,\delta^{\ast})$.
\end{theorem}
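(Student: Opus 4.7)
The plan is to build two interleaved ``infinite brooms'' whose non-compactness prevents any $\delta$-tubular separation. First, I would select a bounded sequence $\{e_n\}_{n\geq 1}\subset U$ with $\inf_{n\neq m}\|e_n-e_m\|\geq c>0$ (available in every infinite-dimensional Banach space as a normalized basic sequence with biorthogonal functionals), together with a unit vector $p\in U$ that is separated from $\overline{\text{span}\{e_n:n\geq 1\}}$ by a continuous linear functional. Then I would set
\[
\mathscr{C}_1:=\{0\}\cup\bigcup_{n\geq 1}[0,e_n],\qquad
\mathscr{C}_2:=\{p\}\cup\bigcup_{n\geq 1}\bigl[p,\,e_n+\tfrac{p}{n}\bigr],\qquad
\mathscr{B}:=\mathscr{C}_1\uplus\mathscr{C}_2.
\]
A routine verification using the uniform separation of the $e_n$'s shows that both brooms are closed, bounded, connected through their respective base points, and mutually disjoint, while $\text{dist}(\mathscr{C}_1,\mathscr{C}_2)\leq\|p\|/n\to 0$; in particular, $\mathscr{C}_1$ and $\mathscr{C}_2$ are the two connected components of $\mathscr{B}$, and neither is compact.

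Setting $\delta^{\ast}:=\tfrac12\min\{\text{dist}(0,\mathscr{C}_2),\text{dist}(p,\mathscr{C}_1)\}>0$, I would fix $\delta\in(0,\delta^{\ast})$ and argue by contradiction. Suppose $\mathcal{V}$ is an open isolating neighborhood of $\mathscr{C}_1$ of size $\delta$. The hypothesis $\mathscr{B}\cap\partial\mathcal{V}=\emptyset$ makes $\mathscr{B}\cap\mathcal{V}$ clopen in $\mathscr{B}$, and the connectedness of $\mathscr{C}_2$ together with $p\notin\mathcal{N}(\delta)\supset\mathcal{V}$ yield $\mathscr{C}_2\cap\overline{\mathcal{V}}=\emptyset$. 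Openness of $\mathcal{V}$ at each $e_n\in\mathscr{C}_1$ then supplies balls $B(e_n,r_n)\subset\mathcal{V}$, and the disjointness from $\overline{\mathcal{V}}$ at $e_n+p/n\in\mathscr{C}_2$ forces $r_n\leq\|p\|/n$: the thickness of $\mathcal{V}$ collapses along the bristle tips.

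The main obstacle, and technical heart of the argument, is to convert this collapsing-thickness phenomenon into an actual contradiction. The intended step is to parameterize points on the approach bristles $[p,\,e_n+p/n]\subset\mathscr{C}_2$ that enter and leave $\mathcal{N}(\delta,\mathscr{C}_1)$, and extract from the non-compact family of ``entering points'' an accumulation point which, by construction, must lie in $\mathscr{B}\cap\partial\mathcal{V}$; the absence of any convergent subsequence of $\{e_n\}$ pushes the limit exactly onto the boundary. This limit extraction is the direct dual of the compactness hypothesis used in the Whyburn--Kuratowski splitting underlying Lemma~\ref{le3.1}: in a compact setting, one could always separate $M$ cleanly, but here the infinitely many approach bristles of $\mathscr{C}_2$ produce an accumulation on $\partial\mathcal{V}$ that no small tubular neighborhood of $\mathscr{C}_1$ can avoid, thereby establishing the claim; the symmetric argument for $\mathscr{C}_2$ follows by exchanging the roles of $0$ and $p$.
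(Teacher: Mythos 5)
Your construction mirrors the paper's in spirit: the paper takes $\mathscr{C}_1=\mathbb{S}(U)$ (the unit sphere) and $\mathscr{C}_2=\bigcup_n\{tu_n:0<t\le t_n\}\cup\{0\}$ a single broom along a Riesz sequence with $t_n\uparrow 1$, whereas you use two interleaved brooms; in both cases $\mathrm{dist}(\mathscr{C}_1,\mathscr{C}_2)=0$, the tips cluster toward the other component, and $\delta^\ast$ is chosen so the base point of each component lies outside the $\delta$-tube of the other. Up to this point the two constructions are essentially equivalent.

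The genuine gap is exactly where you flag it, and the fix you sketch cannot work. Your clopen/connectedness deduction that $\mathscr{C}_2\cap\overline{\mathcal{V}}=\emptyset$ is correct; note that this is in direct tension with the paper's own proof, which at the same stage asserts that ``since $\mathrm{dist}(\mathscr{C}_1,\mathscr{C}_2)=0$, necessarily $A:=\mathscr{C}_2\cap\mathcal{V}\neq\emptyset$'' and then disconnects $\mathscr{C}_2=\overline A\uplus\overline B$; your argument shows that under the contradictory hypothesis $A$ is in fact empty, so that route is closed. Your replacement, extracting an accumulation point of ``entering points'' that lands in $\mathscr{B}\cap\partial\mathcal{V}$, fails for two reasons. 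First, the tips satisfy $\|(e_n+p/n)-(e_m+p/m)\|\ge c-\|p\|(1/n+1/m)\ge c/2$ for all large $n\neq m$, so the relevant family of crossing points has no accumulation point at all. Second, even if boundary crossings accumulated, nothing forces the limit into $\mathscr{B}$ rather than into $\partial\mathcal{V}\setminus\mathscr{B}$; the estimate $r_n\le\|p\|/n$ only says $\mathcal{V}$ is thin near the tips, which by itself produces no point of $\mathscr{B}$ on $\partial\mathcal{V}$. In fact, under Definition~\ref{de4.1} as literally written the desired contradiction is not available for any choice of $\mathscr{B}$: by normality of the metric space $U$ there are disjoint open $W_1\supset\mathscr{C}_1$, $W_2\supset\mathscr{C}_2$, and then $\mathcal{V}:=W_1\cap\mathcal{N}(\delta)$ is open, satisfies $\mathscr{C}_1\subset\mathcal{V}\subset\mathcal{N}(\delta)$, and $\overline{\mathcal{V}}\subset\overline{W}_1\subset U\setminus W_2$ is disjoint from $\mathscr{C}_2$, so $\mathscr{B}\cap\partial\mathcal{V}=\emptyset$. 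Any successful argument must therefore restrict the admissible $\mathcal{V}$ (e.g.\ to genuine uniform tubes $\mathcal{N}(\delta)$ or to the specific $\mathcal{V}(\delta)$ built in the proof of Theorem~\ref{th3.2}); you should revisit the statement with that restriction in mind before attempting to close the gap.
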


\begin{proof}
By a celebrated theorem of Riesz \cite{Ri} on nearly orthogonal elements (see, e.g., Yosida \cite[p. 84]{Yo}), there exists a sequence $\{u_{n}\}_{n\in\N}\subset U$ such that
\begin{equation}
\label{iv.1}
	\|u_n\|=1 \;\; \text{and} \;\; \|u_{n}-u_{m}\|\geq \tfrac{1}{2} \;\; \text{for all} \;\; n,m\in \N, \;\; n\neq m.
\end{equation}
Now, consider the subsets of the closed unit ball
$$
    \mathscr{C}_{1}=\mathbb{S}(U):=\left\{u\in U  : \|u\|=1\right\},
$$
and
$$
	\mathscr{C}_{2}:=\biguplus_{n\in\N}\left\{t u_{n} : 0<t\leq t_{n}\right\}\uplus \{0\}, \quad t_{n}:=1-\tfrac{1}{n+1}.
$$
Obviously, $\mathscr{C}_{1}$ is closed, bounded, and connected, and $\mathscr{C}_{2}$ is bounded and connected. We claim that $\mathscr{C}_{2}$ is also closed. Indeed, let $\{s_{m}u_{n(m)}\}_{m\in\N}\subset \mathscr{C}_{2}$ be a Cauchy sequence. Then, by \eqref{iv.1}, for every $\varepsilon>0$, there exists $m_0=m_0(\e)\in \N$ such that
\begin{equation}
\label{iv.2}
	|s_{m_{1}}-s_{m_{2}}|\leq \|s_{m_{1}}u_{n(m_{1})}-s_{m_{2}}u_{n(m_{2})}\|<\varepsilon \quad \text{for all} \;\; m_{1},m_{2}\geq m_0.
\end{equation}
Thus, there exists $s_0\in [0,1]$ such that
\begin{equation}
\label{iv.3}
  \lim_{m\to +\infty}s_m =s_0.
\end{equation}
Suppose $s_0=0$. Then, since $\|s_{m}u_{n(m)}\|=|s_m|\to 0$ as $m\to+\infty$, we find that
$$
	\lim_{m\to+\infty}\left( s_{m}u_{n(m)}\right) =0\in\mathscr{C}_{2}.
$$
Suppose $s_{0}>0$. Then, by \eqref{iv.1},
\begin{align*}
	s_0 & \|u_{n(m_1)}-u_{n(m_2)}\| = \|s_0 u_{n(m_1)}-s_0 u_{n(m_2)}\|\\ & \leq
    \|s_0 u_{n(m_1)}-s_{m_{1}}u_{n(m_1)}\|+\|s_{m_{1}}u_{n(m_{1})}
   -s_{m_{2}}u_{n(m_{2})}\|+\|s_{m_{2}}u_{n(m_{2})} -s_{0}u_{n(m_{2})}\|\\ & =
   |s_0 -s_{m_{1}}|+\|s_{m_{1}}u_{n(m_{1})}    -s_{m_{2}}u_{n(m_{2})}\|+|s_{m_{2}}-s_{0}|.
\end{align*}
Thus, by \eqref{iv.2}, we find that, whenever $m_1, m_2\geq m_0$,
$$
  s_0  \|u_{n(m_1)}-u_{n(m_2)}\| \leq |s_0 -s_{m_{1}}|+\e+|s_{m_{2}}-s_{0}|.
$$
As $\e>0$ can be taken arbitrarily small and $s_0>0$, it follows from \eqref{iv.3} that, enlarging $m_0$, if necessary,
$$
	\|u_{n(m_{1})}-u_{n(m_{2})}\|<\tfrac{1}{2} \quad \text{for all} \;\; m_{1},m_{2}\geq m_0.
$$
Thus, by \eqref{iv.1}, we can infer that $n(m_{1})=n(m_{2})$ for all $m_{1}, m_{2}\geq m_0$. Consequently,
$u_{n(m_1)}=u_{n(m_0)}$ for all $m_1 \geq m_0$. Therefore,
$$
	\lim_{m\to+\infty}\left( s_{m}u_{n(m)}\right) =s_{0}u_{n(m_0)}\in \mathscr{C}_{2},
$$
because
$$
  s_0=\lim_{m\to+\infty}s_m \leq \lim_{m\to +\infty}t_{n(m)} = 1-\tfrac{1}{n(m_0)+1}.
$$
This ends the proof that $\mathscr{C}_{2}$ is closed.
\par
Summarizing, $\mathscr{C}_{1}$ and $\mathscr{C}_{2}$ are two disjoint, closed, bounded, and connected subsets of $U$. Setting $\mathscr{B}:=\mathscr{C}_{1}\uplus \mathscr{C}_{2}$ and $\delta^{\ast}:=1$, note that $\{u_{n}\}_{n\in\N}\subset \mathscr{C}_{1}$, $\left\{t_{n} u_{n}\right\}_{n\in\N}\subset \mathscr{C}_{2}$ and
$$
	\lim_{n\to+\infty}\|u_{n}-t_{n} u_{n}\|=\lim_{n\to+\infty}\frac{1}{n+1}=0.
$$
Thus, $\text{dist}(\mathscr{C}_{1},\mathscr{C}_{2})=0$. Take $0<\d<\d^{\ast}$ and let $\mc{V}\subset U$ be any  open subset satisfying
$$
	\mathscr{C}_{1}\subset\mc{V}\subset \left\{u\in U : \text{dist}(u,\mathscr{C}_{1})<\d\right\}.
$$
We claim that  $\mathscr{B}\cap \partial\mc{V}\neq \emptyset$. Consequently, $\mathscr{C}_1$
cannot admit any open isolating neighborhood of size $\d$ with respect to $\mathscr{B}$. On the contrary, assume that
$$
  \mathscr{B}\cap \partial\mc{V}= \emptyset.
$$
In particular, $\mathscr{C}_{2}\cap \partial\mc{V}=\emptyset$. Since $\text{dist}(\mathscr{C}_{1},\mathscr{C}_{2})=0$, necessarily
\begin{equation}
\label{iv.4}
   A:=\mathscr{C}_{2}\cap\mc{V}\neq \emptyset.
\end{equation}
Moreover, since $\d\in (0,1)$, we find that $0\notin \mc{V}$. Thus,
$$
   0\in B:=\mathscr{C}_{2}\cap (U\setminus \mc{V}).
$$
On the other hand, due to \eqref{iv.4}, we have that
\begin{equation}
\label{iv.5}
	\bar{A}\subset \mathscr{C}_{2}\cap \bar{\mc{V}}=\mathscr{C}_{2}\cap \mc{V}, \quad \bar{B}\subset \mathscr{C}_{2}\cap \overline{U\setminus \mc{V}}=\mathscr{C}_{2}\cap(U\setminus \mc{V}).
\end{equation}
Therefore, since $\mathscr{C}_{2}$ is closed, \eqref{iv.5} implies that
$$
	\mathscr{C}_{2}=\bar{A}\uplus \bar{B}, \quad \bar{A}\cap \bar{B}=\emptyset.
$$
As this contradicts the connectedness of $\mathscr{C}_{2}$, it becomes apparent that $\mathscr{B}\cap \partial\mc{V}\neq \emptyset$. This argument can be easily adapted to show that $\mathscr{C}_1$ cannot admit either any open isolating neighborhood of size $\d$ with respect to $\mathscr{B}$. The proof is complete.
\end{proof}

\section{Analytic continuation theorems}

\noindent In this section we establish some continuation theorems for analytic Fredholm operators of different nature than those found in Section 3. In the analytic setting, the zero set inherits a much richer structure.
\par
Throughout this section, for any given real Banach spaces, $U$ and $V$, and any nonempty open subset $\mathcal{U}\subset \mathbb{R}\times U$, we consider an analytic map
\[
\mathfrak{F}:\mathcal{U}\longrightarrow V
\]
satisfying the following assumptions:
\begin{enumerate}
	\item[(F1)] $D_{u}\mathfrak{F}(\lambda,u)\in\Phi_{0}(U,V)$ for all $(\lambda,u)\in \mathcal{U}$;
	\item[(F2)] $\mathfrak{F}$ is proper   on closed  (in the topology of $\R\times U$) and bounded subsets of $\mathcal{U}$.
\end{enumerate}
Then,  $\mathscr{S}:=\mathfrak{F}^{-1}(0)$ is closed in $\mathcal{U}$. Through this section,  for every  $\lambda_{0}\in\mathbb{R}$, we denote
\begin{align*}
\mathcal{U}_{\lambda_{0},o}^{+} & := \mathcal{U}\cap \big[(\lambda_{0},+\infty)\times U\big],\\
\mathcal{U}_{\lambda_{0},o}^{-} & := \mathcal{U}\cap \big[(-\infty,\lambda_{0})\times U\big],\\
\mathscr{S}^{+}_{\lambda_{0},o} & :=\mathscr{S}\cap \mathcal{U}^{+}_{\lambda_{0},o}
=\{(\lambda,u)\in\mathscr{S}:\lambda>\lambda_{0}\},\\
\mathscr{S}^{-}_{\lambda_{0},o} & :=\mathscr{S}\cap \mathcal{U}^{-}_{\lambda_{0},o}
=\{(\lambda,u)\in\mathscr{S}:\lambda<\lambda_{0}\},
\end{align*}
where the subindex \lq\lq $o$\rq\rq\!  makes reference to the fact that the intervals $(\l_0,+\infty)$ and
$(-\infty,\l_0)$ are open. Obviously, $\mathscr{S}^{\pm}_{\lambda_{0},o}$ is closed in $\mathcal{U}^{\pm}_{\lambda_{0},o}$ and, by (F2), the sets $\mathscr{S}^{\pm}_{\lambda_{0},o}$ are locally compact.
\par
The next result provides us with an analytic counterpart of the first part of Theorem \ref{th3.2}. It is the main result of this section.

\begin{theorem}
\label{th5.1}
	Let $\mf{F}:\mc{U}\to V$ be an analytic operator satisfying {\rm (F1)} and {\rm (F2)}, and suppose that $(\l_{0},u_{0})\in \mf{F}^{-1}(0)$ satisfies $D_{u}\mf{F}(\l_{0},u_{0})\in GL(U,V)$. Then, there exist $\o^\pm\in \N\cup\{+\infty\}$ and two locally injective continuous curves, $ \G^{\pm}: (0,\o^\pm)\longrightarrow \mc{U}_{\l_{0},o}^{\pm}$, such that
$$
   \Gamma^{\pm}\left((0,\o^\pm)\right)\subset \mathscr{S}_{\l_{0},o}^{\pm}, \qquad \lim_{t\da 0}\Gamma^{\pm}(t)=(\l_{0},u_{0}),
$$
for which one of the following non-excluding alternatives holds:
\begin{enumerate}
		\item[{\rm (a)}]   The curve $\G^\pm$ blows up at $\omega^\pm$, in the sense that
$$
    \limsup_{t\ua \o^\pm} \|\Gamma^{\pm}(t)\|_{\mathbb{R}\times U}= +\infty.
$$
\item[{\rm (b)}]   The curve $\G^\pm$ approximates $\p \mc{U}$ as $t\to \o^\pm$, in the sense that
there exists a sequence $\{t_{n}\}_{n\in\N}$ in $(0,\o^\pm)$ such that
$$
   \lim_{n\to +\infty}t_{n} = \o^\pm \;\;\hbox{and}\;\;  \lim_{n\to+\infty} \Gamma^{\pm}(t_{n})= (\l_{\ast},u_{\ast})\in \partial\mc{U}^\pm_{\l_0,o}.
$$
		\item[{\rm (c)}]   The curve $\G^\pm$ turn backwards to the level $\l=\l_0$, in the sense that there exist $u_{1}\in \mc{U}_{\l_{0}}\setminus\{u_{0}\}$ and a sequence $\{t_{n}\}_{n\in\N}$ in $(0,\o^\pm)$ such that
$$
   \lim_{n\to +\infty} t_{n} = \o^\pm\;\; \hbox{and}\;\; \lim_{n\to+\infty}\Gamma^{\pm}(t_{n})=(\l_{0},u_{1})\in\mf{F}^{-1}(0).
$$
	\end{enumerate}
\end{theorem}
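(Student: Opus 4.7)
The plan is to construct $\G^+$ (the case $\G^-$ is symmetric) as a maximal analytic continuation of the initial local branch supplied by the implicit function theorem, using the structure theorem for real-analytic varieties (Theorem \ref{thA.1}) to navigate through the singular locus of $\ms{S}:=\mf{F}^{-1}(0)$. Since $\mf{F}$ is analytic and $D_u\mf{F}(\l_0,u_0)\in GL(U,V)$, the analytic implicit function theorem yields $\e>0$ and a unique analytic $u:(\l_0-\e,\l_0+\e)\to U$ with $u(\l_0)=u_0$ and $\mf{F}(\l,u(\l))=0$; the arc $\g_{\mathrm{ini}}(t):=(\l_0+t,u(\l_0+t))$, $t\in(0,\e)$, is an injective analytic initial segment in $\ms{S}^+_{\l_0,o}$ with $\lim_{t\da 0}\g_{\mathrm{ini}}(t)=(\l_0,u_0)$.

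I would then apply Zorn's lemma to the partially ordered set of pairs $(\o,\G)$, where $\G:(0,\o)\to\ms{S}^+_{\l_0,o}$ is a locally injective continuous extension of $\g_{\mathrm{ini}}$, chains being handled by concatenation in the natural way, to produce a maximal element $\G^+:(0,\o^+)\to\ms{S}^+_{\l_0,o}$. Reparametrizing each successive analytic arc by a unit-length interval places $\o^+\in\N\cup\{+\infty\}$, in accordance with the statement. The decisive point is that $\G^+(t)$ cannot have any limit point in $\ms{S}\cap\mc{U}^+_{\l_0,o}$ as $t\ua\o^+$: at such a limit $p$, Theorem \ref{thA.1} presents $\ms{S}$ locally as a finite union of analytic arcs (Puiseux branches), and the incoming branch of $\G^+$ at $p$ may be concatenated with an outgoing branch remaining in $\mc{U}^+_{\l_0,o}$, thereby extending $\G^+$ strictly beyond $\o^+$ and contradicting maximality.

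To close the argument, suppose that neither (a) nor (b) holds. The negation of (a) gives $\sup_{t<\o^+}\|\G^+(t)\|_{\R\times U}<+\infty$, while the negation of (b) supplies $\eta>0$ and a sequence $t_n\ua\o^+$ with $\mathrm{dist}(\G^+(t_n),\p\mc{U})\geq\eta$. Together these force $\{\G^+(t_n)\}$ into a closed and bounded subset of $\mc{U}$, and the properness assumption (F2) extracts a convergent subsequence with limit $(\l_*,u_*)\in\ms{S}\cap\overline{\mc{U}^+_{\l_0,o}}$. The previous paragraph excludes $\l_*>\l_0$, so $\l_*=\l_0$ and $u_*\in\mf{F}_{\l_0}^{-1}(0)$. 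The inequality $u_*\neq u_0$ is then forced by the local injectivity of $\G^+$ together with the fact that near $(\l_0,u_0)$ the variety $\ms{S}$ coincides with the IFT graph already exhausted by $\G^+$ for small $t>0$; hence $\G^+$ cannot re-enter any small neighborhood of $(\l_0,u_0)$ at later times. Setting $u_1:=u_*$ delivers alternative (c).

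The hardest part will be making the middle step rigorous: at a singular limit point one must select, among the finitely many Puiseux branches of $\ms{S}$, one that genuinely extends the curve forward into $\mc{U}^+_{\l_0,o}$ rather than merely retracing the incoming branch, and ensure that this selection can be performed consistently along the whole curve while preserving local injectivity. This is precisely where the analyticity of $\mf{F}$ and the Buffoni--Toland structure theorem (Theorem \ref{thA.1}), together with the Puiseux-type local parametrizations, carry the argument.
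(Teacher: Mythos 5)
Your proposal is built on the same machinery as the paper's argument (analytic IFT, Lyapunov--Schmidt reduction at singular zeros, the Buffoni--Toland structure theorem and Puiseux parametrizations), so in spirit you have the right strategy; the difference is that the paper constructs $\G^+$ \emph{inductively}, arc by arc, whereas you propose to produce it at once via Zorn's lemma. That shortcut, as written, loses the one invariant on which the rest of the argument hinges: in the paper's construction each segment $\ms{C}_j=\G^+((j,j+1))$ is a full connected component of the regular set $\mathscr{R}^+(\mf{F})$, and, crucially, the $\ms{C}_j$ are \emph{pairwise disjoint} (the paper proves this for $j=0,1,2$ and propagates it along the induction). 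A maximal element produced by Zorn's lemma on ``locally injective continuous extensions'' carries no such decomposition: nothing prevents the abstract maximal curve from revisiting an earlier arc, and in fact that is exactly what must be ruled out. Without the disjointness invariant, two steps of your argument are unsupported.

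First, the claim that $\G^+$ ``cannot have any limit point in $\ms{S}\cap\mc{U}^+_{\l_0,o}$ as $t\ua\o^+$'' is not justified by the structure theorem alone. When (a) and (b) fail, properness only yields a \emph{subsequential} limit $p$; to concatenate an outgoing branch at $p$ one must first know that $\G^+(t)$ actually converges to $p$ as $t\ua\o^+$, i.e.\ that for $t$ near $\o^+$ the curve stays inside one of the finitely many local Puiseux arcs. The paper gets this because the last segment $\ms{C}_n$ is a single connected component of the regular set and its compact closure meets the singular locus only at $p$; your Zorn curve has no such last segment to appeal to. Likewise, when the induction never terminates ($\o^+=+\infty$), the paper's contradiction is that the mid-points $\G^+(n/2)$ lie on infinitely many \emph{distinct} arcs, so an accumulation point would carry infinitely many branches, contradicting Theorem \ref{thA.1}; with possibly repeated arcs that contradiction evaporates.

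Second, your derivation of $u_*\neq u_0$ is an assertion, not a proof. You say $\G^+$ ``cannot re-enter any small neighborhood of $(\l_0,u_0)$ at later times,'' but local injectivity of $\G^+$ is a statement about the domain, not about the image, and it does not by itself prevent the curve from returning to the IFT graph through $(\l_0,u_0)$. The exclusion comes exactly from the disjointness of the arcs: near $(\l_0,u_0)$ the whole variety is the IFT graph, i.e.\ $\ms{C}_0$, so if some $\ms{C}_{n}$ ($n\geq 1$) accumulated at $(\l_0,u_0)$ it would intersect $\ms{C}_0$, which the paper forbids. A minor further point: the negation of (b) does \emph{not} hand you an $\eta$-separation from $\p\mc{U}$ and a particular sequence $t_n$; the correct inference (which the paper makes) is that the closure of the last arc is a closed bounded subset of $\mc{U}$, whence (F2) yields compactness and a convergent subsequence. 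In short, to rescue your approach you would need to encode the ``pairwise-disjoint-arcs'' structure directly into the partially ordered set before invoking Zorn, at which point it reduces to the paper's induction.
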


\begin{proof}
It is reminiscent of the proof of Theorem 9.1.1 of
Buffoni and Toland \cite{BT}. Attention will be focused into the construction of
$\G^+$, as the construction of $\G^-$ is analogous.
\par
A point $(\l,u)\in \mc{U}^{+}_{\l_{0},o}$ is said to be a \emph{regular zero} of $\mf{F}$ if
$$
    (\l,u)\in \mathscr{R}^{+}(\mf{F}):=D_{u}\mf{F}^{-1}(GL(U,V))\cap \mathscr{S}^{+}_{\l_{0},o}.
$$
Since $GL(U,V)$ is open and $D_{u}\mf{F}$ continuous, $\mathscr{R}^{+}(\mf{F})$ is an open subset of $\mathscr{S}^{+}_{\l_{0},o}$. By the implicit function theorem, for every $(\l,u)\in \mathscr{R}^{+}(\mf{F})$,
there is a path connected component, $\mathscr{C}_{(\l,u)}$, of $\mathscr{R}^{+}(\mf{F})$  through
the point $(\l,u)$. In this proof, any connected component $\mathscr{C}$ of $\mathscr{R}^{+}(\mf{F})$ is called a \textit{distinguished arc}. Thanks to the analytic implicit function theorem, every distinguished arc $\mathscr{C}$ is the graph of an analytic function of $\l$, i.e. there exits an open interval $I\subset\R$ and an analytic function $g: I\to U$ such that
$$
    \mathscr{C}=\{(\l,g(\l)): \l\in I \}.
$$
Based on these features, it follows from $D_{u}\mf{F}(\l_{0},u_{0})\in GL(U,V)$ that, for some
$\tilde \l_0\in (\l_0,+\infty]$, there is a maximal analytic curve $\g_0^+:(\l_0,\tilde\l_0)\longrightarrow U$ such that
$$
    \lim_{\l \da \l_{0}}\gamma_0^+(\l)=u_{0} \quad \text{and} \;\; \gamma((\l_0,\tilde\l_0))\subset \mathscr{R}^{+}(\mf{F}).
$$
By maximal, we mean that it is not strictly extensible to the right.
Subsequently, we denote by $\Gamma^{+}_0:(0,1)\longrightarrow \mathscr{S}^{+}_{\l_{0},o}$ any analytic re-parametrization of
$$
  \mathscr{C}_0:= \{(\l,\g_0^+(\l))\;:\; \l\in (\l_0,\tilde\l_0)\},
$$
to the interval $(0,1)$ preserving orientation, i.e. such that
$$
  \mathscr{C}_0:= \{\G_0^+(t) \;:\; t\in (0,1)\} \;\;\hbox{with}\;\; \lim_{t\da 0}\G_0^+(t)=(\l_0,u_0).
$$
Naturally, if $\G_0^+$ satisfies some of the alternatives (a), (b), or (c), of the statement of the theorem, we are done. Thus, suppose that $\G_0^+$ does not satisfy any of these alternatives. Then, $\mathscr{C}_0$ is bounded and separated away from $\p \mc{U}^+_{\l_0,o}$ as $t\ua 1$. Consequently, $\overline{\mathscr{C}}_{0}$ is closed, bounded and it is contained in $\mc{U}$. Therefore, by (F2), there exist
$(\l_1,u_1)\in \mc{U}$ and a sequence $\{t_n\}_{n\geq 1}$ in $(0,1)$ such that
\begin{equation}
\label{5.1}
  \lim_{n\to +\infty}\G^+_0(t_n)=(\l_1,u_1).
\end{equation}
Note that $\l_{1}\neq \l_{0}$ by the uniqueness of the implicit function theorem applied to the point $(\l_0,u_0)$ and the exclusion of alternative (c). Therefore $(\l_1,u_1)\in\mc{U}^{+}_{\l_0,o}$. By this fact and the maximality of $\g_0^+$, $(\l_1,u_1)$ must be a singular zero of $\mathfrak{F}$. Thus,
\begin{equation}
\label{5.2}
   (\l_1,u_1)\in \mathscr{S}_{\l_0,o}^+\backslash \mathscr{R}^{+}(\mf{F}).
\end{equation}
Next, we will analyze  the structure of $\mathscr{S}^{+}_{\l_{0},o}$ in a neighborhood of $(\l_{1},u_{1})$. By construction, $\l_1>\l_0$ and, since $D_u\mf{F}(\l_1,u_1)\in\Phi_0(U,V)$, we have that
\begin{equation}
\label{5.3}
   1\leq n:=\mathrm{dim\,}N[D_{u}\mathfrak{F}(\lambda_{1},u_{1})]=
   \mathrm{codim\,}R[D_{u}\mathfrak{F}(\lambda_{1},u_{1})].
\end{equation}
Thus, there are two linear continuous projections
\begin{equation*}
		P:U \longrightarrow N[D_{u}\mathfrak{F}(\lambda_{1},u_{1})], \qquad Q:V \longrightarrow R[D_{u}\mathfrak{F}(\lambda_{1},u_{1})],
\end{equation*}
and, once fixed these projections, we can decompose
\begin{equation}
\label{5.4}
\begin{split}
	& U= N[D_{u}\mathfrak{F}(\lambda_{1},u_{1})] \oplus Y, \quad Y=N[P],\\
	& V =Z \oplus R[D_{u}\mathfrak{F}(\lambda_{1},u_{1})],  \quad  \ Z=N[Q].
\end{split}
\end{equation}
Thanks to \eqref{5.3},  $\mathrm{dim\,}Z=n$, and, hence, we can identify $\mathbb{R}\times N[D_{u}\mathfrak{F}(\lambda_{1},u_{1})]$ with $\mathbb{R}^{n+1}$ via a (fixed) linear isomorphism
\begin{equation}
\label{5.5}
	T:\mathbb{R}\times N[D_{u}\mathfrak{F}(\lambda_{1},u_{1})] \longrightarrow \R\times \R^{n}, \quad (\l,x)\mapsto T(\l,x):=(\l, Lx).
\end{equation}
For instance, once chosen  a basis in $N[D_{u}\mathfrak{F}(\lambda_{1},u_{1})]$, $Lx$ might be the coordinates of $x \in N[D_{u}\mathfrak{F}(\lambda_{1},u_{1})]$ with respect to that basis.
Similarly, we can identify $Z$ with $\R^{n}$ via another (fixed) linear isomorphism
$$
   S: Z \longrightarrow \R^{n}.
$$
By construction, any element $u\in U$ admits a unique decomposition as
\begin{equation}
\label{5.6}
	u = u_{1}+x + y, \qquad x = P (u-u_{1}), \quad y = (I_U-P)(u-u_{1}),
\end{equation}
and the equation $\mf{F}(\l,u)=0$ can be equivalently expressed as
\begin{equation}
\label{5.7}
	\left\{ \begin{array}{l} Q \mathfrak{F}(\l,u_{1}+x+y)=
		0,  \\[4pt]	(I_V-Q)\mathfrak{F}(\l,u_{1}+x+y)=  0. \end{array}\right.
\end{equation}
Now, adopting the methodology of Section 3.1 of \cite{LG01}, we consider the analytic operator
\begin{equation*}
	\mc{H}\, :\, \R \times N[D_{u}\mathfrak{F}(\lambda_{1},u_{1})]\times Y \to V, \quad \mc{H}(\l,x,y):=Q \mathfrak{F}(\l,u_{1}+x+y).
\end{equation*}
Since $\mc{H}(\l_1,0,0)=0$ and the linearization
$$
D_y\mc{H}(\l_1,0,0) = Q D_{u}\mathfrak{F}(\lambda_{1},u_{1})|_Y\, :\,
Y \longrightarrow R[D_{u}\mathfrak{F}(\lambda_{1},u_{1})]
$$
is an isomorphism, by the implicit function theorem, there exist a neighborhood $\mc{V}$ of $(\l_1,0)$ in $\R\times N[D_{u}\mathfrak{F}(\lambda_{1},u_{1})]$ and an analytic map $\psi:\mc{V}\to Y$ such that
\begin{equation}
\label{5.8}
	\mc{H}(\l,x,\psi(\l,x))=0\;\; \hbox{for all}\;\; (\l,x)\in\mc{V}.
\end{equation}
Moreover, there is a neighborhood
$\mc{W}$ of $(\l,u)=(\l_1,u_{1})$ in $\mc{U}_{\l_0,o}^{+}$ such that
\begin{equation}
\label{5.9}
  y = \psi(\l,x) \;\;\hbox{if}\;\; (\l,u)=(\l,u_{1}+x+y)\in \mc{W}\;\;\hbox{and}\;\; \mc{H}(\l,x,y)=0.
\end{equation}
Thus, since $\mc{H}(\l_1,0,0)=0$, we find that $\psi(\l_{1},0)=0$. Finally, substituting $y=\psi(\l,x)$ into the second equation of \eqref{5.7} yields
\begin{equation}
\label{5.10}
	(I_V - Q)\mathfrak{F}(\l,u_{1}+x+\psi(\l,x))=0, \quad (\l,x)\in \mc{V}.
\end{equation}
Therefore, $(\l,x)\in\mc{V}$ solves \eqref{5.10} if and only if
$$
(\l,u)=(\l,u_{1}+x+\psi(\l,x))\in\mc{W}
$$
satisfies  $\mf{F}(\l,u)=0$. Consequently,  introducing the open neighborhood of $(\l_{1},0)$
$$
   \mc{E}:=T(\mc{V})=\{(\l,Lx) : (\l,x)\in\mc{V}\}\subset \R\times \R^{n}
$$
as well as the analytic map $\mathfrak{G}: \mc{E}\subset\mathbb{R}\times \R^{n}\longrightarrow \mathbb{R}^{n}$
defined by
\begin{equation}
\label{5.11}
	\mf{G}(\lambda,z):=S(I_{V}-Q)\mathfrak{F}(\lambda,u_{1}+L^{-1}z+\psi(\lambda,L^{-1}z)),
\end{equation}
it becomes apparent that solving $\mf{F}(\l,u)=0$ in $\mc{W}$ is equivalent to
solve the finite-dimensional equation $\mf{G}(\l,z)=0$ in $\mc{E}$. Note that $\mf{G}(\l_{1},0)=0$.
Moreover, by construction, the maps
\begin{align*}
		& \Psi: \; \mathfrak{F}^{-1}(0)\cap \mc{W}\longrightarrow \mathfrak{G}^{-1}(0),
        \hspace{0.8cm} (\lambda,u)\mapsto (\lambda,LP(u-u_{1})),\\
		 & \Psi^{-1}:\; \mathfrak{G}^{-1}(0)\longrightarrow \mathfrak{F}^{-1}(0)\cap\mc{W},
      \quad (\lambda,z)\mapsto (\lambda,u_{1}+L^{-1}z+\psi(\lambda,L^{-1}z)),
\end{align*}
are inverses of each other. Furthermore, for every $(\l,z)\in \mf{G}^{-1}(0)$,
\begin{equation*}
    D_{z}\mathfrak{G}(\l,z)\in GL(\R^{n})\;\; \hbox{if and only if} \;\;
    D_{u}\mathfrak{F}(\lambda,u_1+ L^{-1}z+\psi(\lambda,L^{-1}z))\in GL(U,V).
\end{equation*}
Consequently,
$$
   \mathscr{R}(\mf{G}):= \Psi(\mathscr{R}^{+}(\mf{F})\cap \mc{W})
$$
provides us with the set of regular points of $\mf{G}$, i.e.
$$
   \mathscr{R}(\mf{G})=\{(\l,z)\in\mf{G}^{-1}(0) : \;D_{z}\mf{G}(\l,z)\in GL(\R^n)\}.
$$
Subsequently, we set	
$$
    W:=\mathscr{V}(\mc{E},\{\mf{G}\})=\mf{G}^{-1}(0), \quad M:=\mathscr{R}(\mf{G}).
$$
Clearly, $M\subset W$ is a one-dimensional real analytic manifold. Let
$$
   \mc{M}:=\{M_{j} : j\in J\}
$$
be the family of connected components of $M$ such that the germ of $M_j$ at $(\l_1,0)$, $\gamma_{(\l_{1},0)}(M_{j})$, is nonempty. The real analytic function $\mf{G}(\l,z)$ can be \textit{complexified}  by replacing $(\l,z)\in \R^{n+1}$ with $(\l,z)\in\C^{n+1}$, which leads
to a real-on-real analytic function
$$
   \mf{G}^{\mf{c}}:\mc{E}^{\mf{c}}\subset \C\times \C^{n} \longrightarrow \C^{n}
$$
defined in a sufficiently small complex neighborhood $\mc{E}^{\mf{c}}$ of $(\l_1,0)$ with  $\mc{E}\subset\mc{E}^{\mf{c}}$. Lastly, we introduce the complex counterparts of $W$ and $M$,
$$
  W^{\mf{c}}:=\mathscr{V}(\mc{E}^{\mf{c}},\{\mf{G}^{\mf{c}}\}),\quad
    M^{\mf{c}}:=\mathscr{R}(\mf{G}^{\mf{c}}) \equiv
    \{(\l,z)\in \mc{E}^{\mf{c}} : D_{z}\mf{G}^{\mf{c}}(\l,z)\in GL(\C^n)\},
$$
where $U^\mf{c}:=U+iU$ and $V^\mf{c}:=V+iV$. Clearly, $M^{\mf{c}}$ is a one-dimensional complex analytic manifold. Let us denote by
$$
    \mc{M}^{\mf{c}}:=\{M_{j}^{\mf{c}} : j\in J^{\mf{c}}\},
$$
the connected components of $M^{\mf{c}}$ with nonempty germ $\gamma_{(\l_{1},0)}(\R^{n+1}\cap M_{j}^{\mf{c}})$. Naturally, for every $j\in J$, there exists $j_{0}=j_0(j)\in J^{\mf{c}}$ such that $M_{j}\subset M_{j_{0}}^{\mf{c}}$.
\par
According to Theorem \ref{thA.1} (iv)--(vi) applied to $W^{\mf{c}}$, it becomes apparent that, for every $j\in J^{\mf{c}}$, there is a real-on-real branch $B_{j}$ of a Weierstrass analytic variety such that
$$
    \gamma_{(\l_{1},0)}(M^{\mf{c}}_{j})\subset \gamma_{(\l_{1},0)}(\overline{B}_{j}), \quad \dim B_{j}=1, \quad B_{j}\subset W^{\mf{c}}.
$$
Moreover, by shortening the neighborhood $\mc{E}^{\mf{c}}$, if necessary, we can suppose that
$$
    B_{j}\backslash \{(\l_{1},0)\}\subset M^{\mf{c}}_{j}.
$$
By Theorem \ref{thA.1},  there are finitely many branches and, hence, $\mc{M}$ and $\mc{M}^{\mf{c}}$ have finitely many components. By Theorem \ref{thA.2}, each of these one dimensional branches $B_{j}$ admits an injective continuous complex parametrization in a neighborhood of $(\l_{1},0)$ as a Puiseux series. Moreover, for every $j\in J^{\mf{c}}$, 	$\R^{n+1}\cap \overline{B}_{j}$ admits an injective continuous real parametrization of the form \eqref{A.2}. Consequently, in a neighborhood of $(\l_{1},0)\in \mc{E}$, $\overline{M}$ consists of the graphs of finitely many curves passing through $(\l_{1},0)\in \mc{E}$ that intersect to each other only at $(\l_{1},0)$ and are given by some parametrization of the type \eqref{A.2}. Therefore, each $M_{j}$, $j\in J$, is paired in an unique way with another $M_{\sigma(j)}$, $\sigma(j)\in J\backslash\{j\}$, so that their union with the point $(\l_{1},0)$ forms one of these curves. Based on these features, it is easily seen that  \eqref{5.1} and \eqref{5.2} imply
\begin{equation}
\label{5.12}
\lim_{t\ua 1}\G^+_0(t)=(\l_1,u_1)\in \mathscr{S}_{\l_0,o}^+\backslash \mathscr{R}^{+}(\mf{F}).
\end{equation}
Moreover, there exist $\d>0$ and $M_{j}\in\mc{M}$ such that
$$
   \Psi(\{\G_0^+(t)\;:\; t\in (1-\d,1)\})= M_{j}.
$$
Thus, there exists a unique ${M}_{\sigma(j)}$, $\sigma(j)\in J\backslash\{j\}$, for which
$$
    M_{j}\cup \{(\l_{1},0)\}\cup M_{\sigma(j)}
$$
 is the graph of an injective continuous curve
$$
    \Sigma:(1-\d,1+\d) \longrightarrow M_{j}\cup \{(\l_{1},0)\}\cup M_{\sigma(j)},
$$
$$
 \Psi^{-1}\circ\Sigma|_{(1-\d,1)}=\G^{+}_{0}|_{(1-\d,1)}, \quad \Sigma(1)=(\l_1,0), \quad
\Sigma((1,1+\d))\subset M_{\s(j)},
$$
of the form \eqref{A.2}. Therefore, the injective continuous curve
$$
     \Theta:(1-\d,1+\d)\longrightarrow \mf{F}^{-1}(0), \quad \Theta:=\Psi^{-1}\circ\Sigma,
$$
extends $\G_0^+$ uniquely beyond $(\l_{1},u_{1})$. Moreover, we can suppose that
$$
   \Theta((1,1+\d))\subset \mathscr{R}^{+}(\mf{F}),
$$
shortening $\d$, if necessary.
\par
Let $\G_1^+:(0,2)\to \mathscr{S}^{+}_{\l_0,o}$ be a maximal injective and continuous curve
such that
\begin{equation}
\label{5.13}
\begin{split}
   \G_1^+(t) & =\G_0^+(t) \;\; \hbox{for all}\;\; t \in (0,1), \\
  \G_1^+(1) & = (\l_1,u_1),\\
  \G_1^+(t) & \in  \mathscr{R}^{+}(\mf{F}) \;\; \hbox{for all}\;\; t \in (1,2).
\end{split}
\end{equation}
Set
$$
   \mathscr{C}_{1}:= \{\G_1^+(t) : t\in (1,2)\}.
$$
Then,  $\mathscr{C}_{0}\cap \mathscr{C}_{1}=\emptyset$. Indeed, if $\mathscr{C}_{0}\cap \mathscr{C}_{1}\neq\emptyset$, by the uniqueness of the implicit function theorem, necessarily $\mathscr{C}_{0}=\mathscr{C}_{1}$. But this cannot happen, since $M_{j}\neq M_{\s(j)}$.
If $\G_1^+$ satisfies some of the alternatives (a), (b), or (c) of the theorem, we are done. If not, repeating the previous argument, we find that
$$
  \lim_{t\ua 2} \G_1^+(t)=(\l_2,u_2)\in \mathscr{S}_{\l_0,o}^+\backslash \mathscr{R}^{+}(\mf{F})
$$
and the previous argument can be repeated up to construct  a maximal locally injective and continuous curve,
$\G_2^+:(0,3)\to \mathscr{S}^{+}_{\l_{0},o}$, such that
\begin{equation}
\label{5.13}
\begin{split}
   \G_2^+(t) & =\G_1^+(t) \;\; \hbox{for all}\;\; t \in [0,2), \\
  \G_2^+(2) & = (\l_2,u_2),\\
  \G_2^+(t) & \in \mathscr{R}^{+}(\mf{F})\;\; \hbox{for all}\;\; t \in (2,3).
\end{split}
\end{equation}
Although $\G_2^+$ is locally injective,  because it is injective on $(0,2)\cup (2,3)$, it might be not injective, because it might happen that $(\l_0,u_0)=(\l_1,u_1)$, i.e. $\Gamma^{+}_{2}(1)=\Gamma^{+}_{2}(2)$.
\par
Setting
$$
   \mathscr{C}_{2}:= \{\G_2^+(t) : t\in (2,3)\},
$$
we claim that
\begin{equation}
\label{jlg1}
   \mathscr{C}_{0}\cap \mathscr{C}_{2}=\emptyset\;\;
   \hbox{and}\;\; \mathscr{C}_{1}\cap \mathscr{C}_{2}=\emptyset.
\end{equation}
Arguing by contradiction, assume that $\mathscr{C}_{1} \cap \mathscr{C}_{2} \neq \emptyset$. Then, by the uniqueness provided by the implicit function theorem, we must have $\mathscr{C}_{1} = \mathscr{C}_{2}$.
Thus, by the unique continuation property applied at the singular point $(\lambda_{2}, u_{2})$, it follows that
$(\lambda_{1}, u_{1}) = (\lambda_{2}, u_{2})$. Consequently, $\mathscr{C}_{1} = \mathscr{C}_{2}$ is a closed loop whose only singular point is $(\lambda_{1}, u_{1})$. However, adapting  the same argument, this would also imply that $\mathscr{C}_{1} = \mathscr{C}_{0}$, which is impossible. Therefore, $\mathscr{C}_{1} \cap \mathscr{C}_{2} = \emptyset$, as claimed above. Similarly, if $\mathscr{C}_{0}\cap \mathscr{C}_{2}\neq \emptyset$, then  $\mathscr{C}_{0} = \mathscr{C}_{2}$ and, necessarily, $(\lambda_{1}, u_{1}) = (\lambda_{2}, u_{2})$.
Indeed, if $(\lambda_{1}, u_{1}) \neq (\lambda_{2}, u_{2})$, then we would have
$(\lambda_{2}, u_{2}) \in \overline{\mathscr{C}}_{2} \setminus \overline{\mathscr{C}}_{0}$, a contradiction, because $\mathscr{C}_{1}$ would again be a closed loop with $(\lambda_{1}, u_{1})$ as its unique singular point, and,  by the same reasoning as above, we would obtain that $\mathscr{C}_{1} = \mathscr{C}_{0}$, which is impossible. Therefore, \eqref{jlg1} holds.
\par
Arguing inductively, the proof is complete, unless the previous process does not provide us
with a curve satisfying some of the alternatives of the theorem. In such case, for every integer $n\geq 2$, there exists a maximal locally injective and continuous curve $\G_n^+:(0,n+1)\to \mathscr{S}^{+}_{\l_{0},o}$ such that
\begin{equation}
\label{5.13}
\begin{split}
   \G_n^+(t) & =\G_{n-1}^+(t) \;\; \hbox{for all}\;\; t \in (0,n), \\
  \G_n^+(n) & = (\l_{n},u_{n})\in \mathscr{S}_{\l_0,o}^+\backslash \mathscr{R}^{+}(\mf{F}),\\
  \G_{n}^+(t) & \in \mathscr{R}^{+}(\mf{F})\;\; \hbox{for all}\;\; t \in (n,n+1),
\end{split}
\end{equation}
$$
  \lim_{t\ua n+1} \G_n^+(t)=(\l_{n+1},u_{n+1})\in \mathscr{S}_{\l_0,o}^+\backslash \mathscr{R}^{+}(\mf{F}),
$$
and the arcs $\{\mathscr{C}_{j}\}_{j = 0}^{n}$ defined by
$$
   \mathscr{C}_{j}:= \{\G_{j}^+(t) : t\in (j,j+1)\},\quad 0\leq j\leq n,
$$
are pairwise disjoint. Suppose this occurs for all integer $n\geq 2$, and consider the limiting curve
$$
 \G_\o^+:(0,+\infty)\to \mathscr{S}^{+}_{\l_0,o}
$$
defined by
$$
	\G_\o^{+}(t):= \left\{ \begin{array}{ll} \G^{+}_{1}(t) & \hbox{if} \;\; t\in (0,2),\\[1ex]
     \G^{+}_{n}(t) & \hbox{if} \;\; t\in [n,n+1) \;\;\hbox{with}\;\;n\geq 2.\end{array}\right.
$$
If $\G_\o^+$ satisfies some of the alternatives of the theorem, we are done. If not,  since the arcs $\{\mathscr{C}_{n}\}_{n\in\N}$ are pairwise disjoint, the sequence $\{\G_\o^+(\tfrac{n}{2})\}_{n\geq 1}$ consists of distinct points. As we are assuming that $\G_\o^+$ does not satisfy any of the
alternatives of the theorem, $\{\G_\o^+(\tfrac{n}{2})\}_{n\geq 1}$  is bounded and separated away from $\p \mc{U}_{\l_0,o}^+$. Thus,
there is a subsequence, $\{\G_\o^+(\tfrac{n_m}{2})\}_{m\geq 1}$, such that
$$
\lim_{m\to +\infty}\G_\o^+(\tfrac{n_m}{2})=(\l_\o,u_\o)\in \mathscr{S}_{\l_0,o}^+.
$$
Since $\G_\o^+(\tfrac{n_m}{2})\in \mathscr{R}^{+}(\mf{F})$ for all $m\geq 1$, this implies that, in a neighborhood of $(\l_\o,u_\o)$, there are infinitely many distinct analytic arcs, which contradicts Theorem \ref{thA.1} after a Lyapunov--Schmidt reduction of $\mf{F}$ on the point $(\l_\o,u_\o)$. This ends the proof.
\end{proof}

\section{An application}

\noindent Let $\Omega \subset \mathbb{R}^{N}$ be a bounded open set with boundary, $\p\O$, of class $\mc{C}^3$, and suppose that $q\in\R$, $q>1$, and $\mu>\sigma_{1}$, where $\sigma_{1}$ stands for the principal eigenvalue of
$$
  -\D \equiv -\sum_{j=1}^N \frac{\p^2}{\p x_j^2}\quad \hbox{in}\;\;\O
$$
under homogeneous Dirichlet boundary conditions on $\p\O$. In this section, we analyze the quasilinear
parametric boundary value problem
\begin{equation}
	\label{vi.1}
	\left\{
	\begin{array}{ll}
		-\operatorname{div}\!\left(\dfrac{\nabla u}{\sqrt{1-\lambda |\nabla u|^{2}}}\right)
		= \mu u - u^{q} & \quad \text{in } \Omega, \\[1.2ex]
		u=0 & \quad \text{on } \partial\Omega,
	\end{array}
	\right.
\end{equation}
where $\lambda \in \mathbb{R}$ is regarded as a bifurcation parameter. The left-hand side of \eqref{vi.1} corresponds to the mean curvature operator when $\lambda<0$,
to the Laplacian $-\Delta$ when $\lambda=0$, and to the Minkowski  operator when $\lambda>0$.
\par
To the best of our knowledge, the existing literature treats separately the prescribed (Euclidean) mean curvature equation and its Minkowski counterpart, but does not address the mixed problem considered here. In this work we deal with both cases within a unified framework, constructing a global continuum of positive solutions that connects the Euclidean mean curvature regime with the Minkowski regime. For a comprehensive account of the prescribed (Euclidean) mean curvature equation we refer to \cite{MC1,MC2,MC3,MC4,MC5,MC6,MC7,MC8,MC9,Se}, whereas for the Minkowski mean curvature equation we refer to \cite{MM1,MM2,MM3,MM4,MM5,MM6,MM7,MM8,MM9,MM10,MM11,MM12,MM13,MM15,MM16}.
\par
Problem \eqref{vi.1} in the case $\lambda = -1$ has been studied in \cite{OM,Se2} and the references therein. In the present paper we consider this problem as a model application of our abstract result. A more detailed analysis of this equation will be carried out in a forthcoming work.
\par
Throughout this section, the solutions of \eqref{vi.1} are regarded as zeroes of the nonlinear operator
$\mathfrak{F}:\mathcal{U}_{\delta} \longrightarrow L^{p}(\Omega)$ defined by
$$
\mathfrak{F}(\lambda,u):=-\operatorname{div}\!\left(\dfrac{\nabla u}{\sqrt{1-\lambda |\nabla u|^{2}}}\right)
- \mu u + u^{q},\quad (\l,u)\in \mathcal{U}_{\delta},
$$
where, for a given (fixed) $\d\in (0,1)$,  we are denoting
$$
\mathcal{U}_{\delta}:=\left\{(\lambda,u)\in \mathbb{R}\times W^{2,p}_{0}(\Omega)
:\; 1-\lambda \|\nabla u\|^{2}_{\infty}>\delta\right\}.
$$
Here, $W^{2,p}_{0}(\Omega):=W^{2,p}(\Omega)\cap W^{1,p}_{0}(\Omega)$, with $p>N$, are the usual
Sobolev spaces.  As the embedding $W^{2,p}_{0}(\Omega)\hookrightarrow \mathcal{C}^{1,1-\frac{N}{p}}(\bar{\Omega})$ is continuous, $\mathcal{U}_{\delta}$ is an open subset of $\mathbb{R}\times \mathcal{C}^{1,1-\frac{N}{p}}(\bar{\Omega})$. Since
$$
   1-\lambda \|\nabla u\|^{2}_{\infty}\geq 1 >\delta
$$
for all $\l\leq 0$, we have that
\begin{equation}
	\label{Eq6.2}
    (-\infty,0]\times W^{2,p}_{0}(\Omega)\subset \mathcal{U}_{\delta}.
\end{equation}
It is easily seen that $\mathfrak{F}\in \mathcal{C}^{1}(\mathcal{U}_{\delta}, L^{p}(\Omega))$,
with Fr\'{e}chet derivative given by
\[
D_{u}\mathfrak{F}(\lambda,u)[v]
= -\operatorname{div}\!\left(
\left[\frac{I_{N}}{\sqrt{1-\lambda |\nabla u|^{2}}}
+\lambda \frac{\nabla u \otimes \nabla u}{(1-\lambda |\nabla u|^{2})^{3/2}}
\right]\nabla v\right)
- \mu v+ q u^{q-1} v
\]
for every $(\lambda,u)\in\mathcal{U}_{\delta}$ and $v\in W^{2,p}_{0}(\Omega)$,
where $I_{N}$ denotes the identity matrix in $\mathbb{R}^{N}$ and
\[
\nabla u \otimes \nabla u:=\big(\partial_{x_{i}}u\,\partial_{x_{j}}u\big)_{i,j}\in \mathrm{Sym}(N,\R).
\]
Moreover, $\mathfrak{F}: \mathcal{U}_{\delta}\to L^{p}(\Omega)$ is analytic if $q\geq 2$ is an integer.

\begin{lemma}
\label{le6.1}
	For every $(\l,u)\in\mc{U}_{\d}$, $D_{u}\mf{F}(\l,u)$ is a Fredholm operator of index zero.
\end{lemma}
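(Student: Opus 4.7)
The plan is to view $D_{u}\mathfrak{F}(\lambda,u)$ as a linear second-order elliptic operator and apply the standard $L^p$-theory for elliptic boundary value problems, together with stability of the Fredholm index under compact perturbations. Writing $w := 1-\lambda|\nabla u|^2$, the derivative takes the form
\[
\mathcal{L}v = -\operatorname{div}\bigl(A(x)\nabla v\bigr) + c(x)\,v,
\]
where $A(x) = w^{-1/2}\,I_{N} + \lambda\,w^{-3/2}\,\nabla u\otimes\nabla u$ and $c(x)=-\mu + q\,u(x)^{q-1}$. Since $u\in W^{2,p}_{0}(\Omega)\hookrightarrow \mathcal{C}^{1,\alpha}(\bar\Omega)$ with $\alpha = 1-N/p$, one has $\nabla u\in \mathcal{C}^{\alpha}(\bar\Omega)$ and $w\in\mathcal{C}^{\alpha}(\bar\Omega)$ with $w(x)>\delta>0$ on $\bar\Omega$ by the definition of $\mathcal{U}_{\delta}$; hence $A\in\mathcal{C}^{\alpha}\bigl(\bar\Omega;\mathrm{Sym}(N,\R)\bigr)$ and $c\in\mathcal{C}(\bar\Omega)$.

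The first step is to verify uniform ellipticity of $A$ on $\bar\Omega$. For every $\xi\in\R^{N}$,
\[
A(x)\xi\cdot\xi = \frac{|\xi|^{2}}{\sqrt{w}} + \lambda\,\frac{(\nabla u\cdot\xi)^{2}}{w^{3/2}}.
\]
When $\lambda\geq 0$ both summands are non-negative and the first is bounded below by $|\xi|^{2}$ (since $w\leq 1$). When $\lambda<0$, using $w=1+|\lambda|\,|\nabla u|^{2}$ and Cauchy--Schwarz,
\[
A(x)\xi\cdot\xi \;\geq\; \frac{|\xi|^{2}}{\sqrt{w}} - |\lambda|\,\frac{|\nabla u|^{2}|\xi|^{2}}{w^{3/2}} \;=\; \frac{|\xi|^{2}}{w^{3/2}}.
\]
In both regimes $w$ is bounded above and below on $\bar\Omega$, yielding uniform ellipticity. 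Notice that the condition $w>\delta$ is crucial precisely for the Minkowski case $\lambda>0$, where $w$ may otherwise degenerate.

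The second step reduces the problem to a known isomorphism. For $K>0$ sufficiently large, the shifted operator
\[
\mathcal{L}_{K}v := -\operatorname{div}\bigl(A(x)\nabla v\bigr) + K v
\]
is coercive on $W^{1,2}_{0}(\Omega)$, and by the Agmon--Douglis--Nirenberg $L^p$-theory for divergence-form elliptic operators with H\"older continuous principal part and $\mathcal{C}^{3}$ boundary, it defines a topological isomorphism $\mathcal{L}_{K}:W^{2,p}_{0}(\Omega)\to L^{p}(\Omega)$. In particular $\mathcal{L}_{K}\in\Phi_{0}\bigl(W^{2,p}_{0}(\Omega),L^{p}(\Omega)\bigr)$.

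Finally, the difference $\mathcal{L}-\mathcal{L}_{K}$ is the multiplication operator $v\mapsto (c(x)-K)v$. Since $c-K\in L^{\infty}(\Omega)$ and the Rellich--Kondrachov embedding $W^{2,p}_{0}(\Omega)\hookrightarrow L^{p}(\Omega)$ is compact on the bounded domain $\Omega$, this operator is compact from $W^{2,p}_{0}(\Omega)$ into $L^{p}(\Omega)$. Stability of the Fredholm index under compact perturbations then yields
\[
D_{u}\mathfrak{F}(\lambda,u) \;=\; \mathcal{L}_{K} + (\mathcal{L}-\mathcal{L}_{K}) \;\in\; \Phi_{0}\bigl(W^{2,p}_{0}(\Omega),L^{p}(\Omega)\bigr),
\]
as claimed. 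The main subtle point is the ellipticity check in the Minkowski regime $\lambda>0$, which rests on the defining constraint $1-\lambda\|\nabla u\|_{\infty}^{2}>\delta$; once this is in hand, everything else is a direct application of classical elliptic $L^{p}$-theory.
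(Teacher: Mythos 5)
Your proof is correct and follows essentially the same route as the paper: write $D_u\mathfrak{F}(\lambda,u)$ in divergence form, verify uniform ellipticity by a three-case analysis on the sign of $\lambda$ (with the constraint $1-\lambda\|\nabla u\|_\infty^2>\delta$ providing coercivity in the Minkowski regime), invoke the $L^p$-isomorphism theorem for a suitable zeroth-order shift, and conclude via compact perturbation of an invertible operator. The only cosmetic differences are the reference (Agmon--Douglis--Nirenberg versus Lax--Milgram plus Denk--Hieber--Pr\"uss) and the choice of comparison operator ($-\operatorname{div}(A\nabla\cdot)+K$ versus $D_u\mathfrak{F}+\nu J$), neither of which affects the substance of the argument.
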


\begin{proof}
For every $(\l,u)\in \mc{U}_{\d}$, we can rewrite
$$
	D_{u}\mf{F}(\l,u)[v]=-\text{div}(A(x)\nabla v)+c(x)v, \quad v\in W^{2,p}_{0}(\O),
$$
where, for every $x\in\O$,
$$
A(x):=\frac{I_{N}}{\sqrt{1-\l |\nabla u(x)|^{2}}}+\l \frac{(\nabla u \otimes \nabla u)(x)}{(1-\l |\nabla u(x)|^{2})^{\frac{3}{2}}}, \quad c(x):=-\mu+q u^{q-1}(x).
$$
Note that $A\in\text{Sym}(N,\R)$ and $c\in \mc{C}(\bar{\O})$, because $q>1$ and
$u\in\mathcal{C}^{1,1-\frac{N}{p}}(\bar{\Omega})$. Moreover, since  $u\in\mathcal{C}^{1,1-\frac{N}{p}}(\bar{\Omega})$, the coefficients of the matrix $A(x)\equiv \left(a_{ij}(x)\right)_{1\leq i,j\leq N}$ are H\"{o}lder continuous. In particular, they are bounded.
Actually, since $u\in \mc{U}_\d$, by the definition of $\mc{U}_\d$, we have that
$$
	\frac{1}{1-\l |\nabla u(x)|^{2}}<\frac{1}{\d} \quad \text{for all} \;\; x\in \bar{\O}.
$$
Thus, for every $i, j\in\{1,\ldots,N\}$ and $x\in\O$, we find that
\begin{equation*}
		|a_{ij}(x)|=\left|\frac{\d_{ij}}{\sqrt{1-\l |\nabla u(x)|^{2}}}+\l \frac{\partial_{x_{i}}u(x)\cdot \partial_{x_{j}}u(x)}{(1-\l |\nabla u(x)|^{2})^{\frac{3}{2}}}\right|\leq \frac{1}{\d^{\frac{1}{2}}}+|\l| \frac{ \|\nabla u\|^{2}_{\infty}}{\d^{\frac{3}{2}}}<+\infty.
\end{equation*}
Hence, $a_{ij}\in \mc{C}^{0,\b}(\bar\O)$ for some $\b\in (0,1)$, and $D_{u}\mf{F}(\l,u)$ is a differential operator with bounded H\"{o}lder continuous coefficients. Now, we will show that $D_{u}\mf{F}(\l,u)$ is uniformly elliptic. Indeed, for every  $\xi=(\xi_{i})_{i=1}^{N}\in \R^{N}$ such that $\xi\neq 0$, one has that
\begin{align*}
	\sum_{i,j=1}^{N}a_{ij}(x)\xi_{i}\xi_{j}=\frac{|\xi|^{2}}{\sqrt{1-\l |\nabla u(x)|^{2}}}+\l \frac{
\langle \xi,\nabla u(x)\rangle^{2}}{(1-\l |\nabla u(x)|^{2})^{\frac{3}{2}}},
\end{align*}
where $\langle\cdot,\cdot\rangle$ stands for the Euclidean product of $\R^N$. Next, we will distinguish three cases according to the sign of the parameter $\l$. Suppose $\l=0$. Then, for every $x\in\O$,
$$
	\sum_{i,j=1}^{N}a_{ij}(x)\xi_{i}\xi_{j}=|\xi|^{2}.
$$
Suppose $\l>0$. Then, for every $x\in\O$,
\begin{equation*}
		\sum_{i,j=1}^{N}a_{ij}(x)\xi_{i}\xi_{j}\geq \frac{|\xi|^{2}}{\sqrt{1-\l |\nabla u(x)|^{2}}}\geq \frac{|\xi|^{2}}{\sqrt{1-\l \|\nabla u \|^{2}_{\infty}}}.
\end{equation*}
Lastly, in case  $\l<0$, by the Cauchy--Schwarz inequality, we find that, for every $x\in\O$,
\begin{align*}
    	\sum_{i,j=1}^{N}a_{ij}(x)\xi_{i}\xi_{j} &=\frac{|\xi|^{2}}{\sqrt{1-\l |\nabla u(x)|^{2}}}-|\l| \frac{\langle\xi,\nabla u(x)\rangle^{2}}{(1-\l |\nabla u(x)|^{2})^{\frac{3}{2}}} \\ &
    	\geq \frac{|\xi|^{2}}{\sqrt{1-\l |\nabla u(x)|^{2}}}-|\l| \frac{|\xi|^{2}\cdot |\nabla u(x)|^{2}}{(1-\l |\nabla u(x)|^{2})^{\frac{3}{2}}} \\
    	& = \frac{|\xi|^{2}}{(1-\l |\nabla u(x)|^{2})^{\frac{3}{2}}}\geq \frac{|\xi|^{2}}{(1-\l \|\nabla u\|^{2}_{\infty})^{\frac{3}{2}}}.
\end{align*}
Consequently, $D_{u}\mf{F}(\l,u)$ is uniformly elliptic in $\O$ with ellipticity constant
$$
\theta:=\min\left\{1, \frac{1}{\sqrt{1-\l \|\nabla u \|^{2}_{\infty}}}, \frac{1}{(1-\l \|\nabla u\|^{2}_{\infty})^{\frac{3}{2}}}\right\}>0.
$$
Thus, combining the Lax--Milgram theorem, \cite{LaxM}, with some standard techniques in $L^p$--regularity theory (see Denk, Hieber and Pr\"{u}ss \cite{DHPa,DHPb}), we find that, for sufficiently large $\nu>0$,
$$
D_{u}\mf{F}(\l,u)+\nu J \in GL(W^{2,p}_{0}(\O), L^{p}(\O)),
$$
where $J: W^{2,p}_{0}(\O)\hookrightarrow L^{p}(\O)$ is the canonical embedding, which is compact.  Therefore,
$$
D_{u}\mf{F}(\l,u) = [D_{u}\mf{F}(\l,u)+\nu J]-\nu J
$$
is a compact perturbation of an invertible operator. As these operators are Fredholm of index zero,
the proof is completed.
\end{proof}

\begin{lemma}
\label{le6.2}
$\mf{F}:\mc{U}_{\d}\to L^{p}(\O)$ is proper in closed and bounded subsets of $\mc{U}_{\d}$.
\end{lemma}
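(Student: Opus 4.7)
The plan is to pull out a candidate limit by a compactness argument and then upgrade the convergence from $\mathcal{C}^{1,\alpha}(\bar\Omega)$ to $W^{2,p}_0(\Omega)$ by viewing the equation in non-divergence form and invoking $L^p$-elliptic regularity uniformly in $n$.

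Concretely, I would let $B \subset \mathcal{U}_{\delta}$ be closed (in the ambient topology of $\mathbb{R}\times W^{2,p}_{0}(\Omega)$) and bounded, and take a sequence $(\l_{n},u_{n})\in B$ with $\mf{F}(\l_{n},u_{n})\to f$ in $L^{p}(\Omega)$. Boundedness of $B$ yields $|\l_{n}|\leq C$ and $\|u_{n}\|_{W^{2,p}}\leq C$. Since $p>N$, the embedding $W^{2,p}_{0}(\Omega)\hookrightarrow \mathcal{C}^{1,\alpha}(\bar\Omega)$ is compact for $\alpha\in(0,1-\tfrac{N}{p})$, so after extracting a subsequence I may assume $\l_{n}\to\l_{\ast}$, $u_{n}\to u_{\ast}$ in $\mathcal{C}^{1,\alpha}(\bar\Omega)$, and $u_{n}\rightharpoonup u_{\ast}$ weakly in $W^{2,p}_{0}(\Omega)$.

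Next I would rewrite the equation in non-divergence form using the pointwise identity
\[
\mf{F}(\l_{n},u_{n}) = -\sum_{i,k=1}^{N} a^{n}_{ik}(x)\,\partial_{x_{i}x_{k}}u_{n} - \mu u_{n} + u_{n}^{q},
\qquad
a^{n}_{ik}(x)=\frac{\delta_{ik}}{\sqrt{1-\l_{n}|\nabla u_{n}|^{2}}}+\l_{n}\frac{\partial_{x_{i}}u_{n}\,\partial_{x_{k}}u_{n}}{(1-\l_{n}|\nabla u_{n}|^{2})^{3/2}},
\]
so that setting $L_{n}u:=-\sum_{i,k}a^{n}_{ik}\partial_{x_{i}x_{k}}u$ and $g_{n}:=\mf{F}(\l_{n},u_{n})+\mu u_{n}-u_{n}^{q}$ gives $L_{n}u_{n}=g_{n}$ with $g_{n}\to g_{\ast}:=f+\mu u_{\ast}-u_{\ast}^{q}$ strongly in $L^{p}(\Omega)$. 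Because $(\l_{n},u_{n})\in\mc{U}_{\delta}$ enforces $1-\l_{n}|\nabla u_{n}|^{2}\geq\delta$ uniformly on $\bar\Omega$, and $\nabla u_{n}\to\nabla u_{\ast}$ uniformly, the coefficients satisfy $a^{n}_{ik}\to a^{\ast}_{ik}$ in $\mathcal{C}(\bar\Omega)$, while the ellipticity proof of Lemma \ref{le6.1} provides a uniform lower ellipticity constant $\theta>0$ and uniform $\mathcal{C}^{0,\beta}$ bounds for $(a^{n}_{ik})$. In particular $\|L_{n}-L_{\ast}\|_{\mathcal{L}(W^{2,p}_{0},L^{p})}\leq C\max_{i,k}\|a^{n}_{ik}-a^{\ast}_{ik}\|_{\infty}\to 0$.

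The main technical step, and the one I expect to be the pinch point, is to turn this into strong $W^{2,p}$--convergence. I would invoke the $L^{p}$-regularity theory of Denk--Hieber--Prüss \cite{DHPa,DHPb} (combined with Lax--Milgram, as in Lemma \ref{le6.1}) to fix $\nu>0$ so large, independently of $n$, that $L_{n}+\nu J\in GL(W^{2,p}_{0}(\Omega),L^{p}(\Omega))$ with $\|(L_{n}+\nu J)^{-1}\|$ uniformly bounded; this is a uniform Schauder-type estimate relying on the uniform ellipticity and uniform Hölder bound of the coefficients. Then, from
\[
(L_{n}+\nu J)u_{n}=g_{n}+\nu u_{n},
\]
operator-norm convergence $L_{n}\to L_{\ast}$ together with the identity $(L_{n}+\nu J)^{-1}-(L_{\ast}+\nu J)^{-1}=(L_{\ast}+\nu J)^{-1}(L_{\ast}-L_{n})(L_{n}+\nu J)^{-1}$ yields operator-norm convergence of the inverses. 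Combined with $u_{n}\to u_{\ast}$ in $L^{p}$ and $g_{n}\to g_{\ast}$ in $L^{p}$, this gives
\[
u_{n}=(L_{n}+\nu J)^{-1}(g_{n}+\nu u_{n})\longrightarrow (L_{\ast}+\nu J)^{-1}(g_{\ast}+\nu u_{\ast})\quad\text{in }W^{2,p}_{0}(\Omega),
\]
the limit being forced to coincide with $u_{\ast}$ because passing to weak limits in $L_{n}u_{n}=g_{n}$ (using $a^{n}_{ik}\to a^{\ast}_{ik}$ in $\mathcal{C}(\bar\Omega)$ and $\partial_{x_{i}x_{k}}u_{n}\rightharpoonup \partial_{x_{i}x_{k}}u_{\ast}$ in $L^{p}$) yields $L_{\ast}u_{\ast}=g_{\ast}$. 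Thus $(\l_{n},u_{n})\to(\l_{\ast},u_{\ast})$ strongly in $\mathbb{R}\times W^{2,p}_{0}(\Omega)$; closedness of $B$ forces $(\l_{\ast},u_{\ast})\in B\subset\mc{U}_{\delta}$, which establishes properness on closed bounded subsets of $\mc{U}_{\delta}$.
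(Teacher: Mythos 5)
Your argument is correct and, in its analytic core, runs parallel to the paper's: use the compact embedding $W^{2,p}_{0}(\Omega)\hookrightarrow \mathcal{C}^{1,\nu}(\bar\Omega)$ to extract a $\mathcal{C}^{1}$-convergent subsequence, identify the limiting equation, and then upgrade to strong $W^{2,p}_{0}$-convergence via $L^{p}$-elliptic estimates. The two implementations differ, however. First, the paper reduces (via Theorem 2.7.1 of Berger) to slabs $K=[\lambda_{-},\lambda_{+}]\times \bar B_{R}$ and verifies that $\mathfrak{F}(K)$ is closed and that $\mathfrak{F}^{-1}(f)\cap K$ is compact for each $f$; you instead use the direct sequential characterization of properness on a closed bounded $B$. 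Both are legitimate. Second, and more substantively, at the strong-convergence step the paper stays in divergence form and writes the difference equation with a remainder $\operatorname{div}(A_{k}\nabla u_{n_{k}})$ (with $A_{k}$ the scalar coefficient difference), which through $\nabla A_{k}$ reintroduces $D^{2}u_{n_{k}}$ and therefore requires an absorption into the elliptic left-hand side to close the estimate; by switching to non-divergence form with frozen coefficients $a^{n}_{ik}$ you get $\|L_{n}-L_{\ast}\|_{\mathcal{L}(W^{2,p}_{0},L^{p})}\to 0$ outright, which makes the final estimate cleaner. One remark: the resolvent-identity detour, including the claim that $\|(L_{n}+\nu J)^{-1}\|$ can be bounded uniformly in $n$, is more machinery than you need and is the least self-evident step in your write-up. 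It is simpler, and avoids that uniform-inverse claim entirely, to apply the uniform a priori estimate $\|v\|_{W^{2,p}}\le C\bigl(\|L_{\ast}v\|_{L^{p}}+\|v\|_{L^{p}}\bigr)$ to $v=u_{n}-u_{\ast}$ and note that
\[
L_{\ast}(u_{n}-u_{\ast})=(L_{\ast}-L_{n})u_{n}+\bigl(g_{n}-g_{\ast}\bigr)\longrightarrow 0\quad\text{in }L^{p}(\Omega),
\]
where you have already established $L_{\ast}u_{\ast}=g_{\ast}$ by passing to weak limits. This yields the strong $W^{2,p}_{0}$-convergence directly.
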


\begin{proof}
It suffices to show that the restriction of $\mf{F}$ to the closed subset
${K}:=[\lambda_{-},\lambda_{+}]\times \bar {B}_{R}$ is proper, where $\lambda_{-}<\lambda_{+}$ are arbitrary and $B_{R}$ stands for the open ball of $W^{2,p}_{0}(\O)$ of radius $R>0$ centered at $0$; the parameters $\l_{\pm}$ and $R>0$ are chosen in such a way that $K\subset \mc{U}_{\d}$. Thanks to Theorem 2.7.1 of Berger \cite{Ber}, it suffices to check that $\mf{F}({K})$ is closed in $L^{p}(\O)$ and that, for every $f\in L^{p}(\O)$,  $\mf{F}^{-1}(f)\cap {K}$ is compact in $\mathbb{R}\times W^{2,p}_{0}(\O)$.
To show that $\mf{F}(K)$ is closed in $L^{p}(\O)$, let $\{f_{n}\}_{n\in\mathbb{N}}$ be a sequence in $\mf{F}(K)\subset L^{p}(\O)$ such that
\begin{equation}
\label{vi.2}
		\lim_{n\to\infty} f_{n}=f \quad \text{in } L^{p}(\O).
\end{equation}
Then, there exists a sequence $\{(\l_{n},u_{n})\}_{n\in\mathbb{N}}$ in $K$ such that
\begin{equation}
\label{vi.3}
		f_{n}=\mf{F}(\l_{n},u_{n}) \quad \hbox{for all} \;\; n\in\mathbb{N}.
\end{equation}
According to a classical result of Rellich \cite{Re} and Kondrachov \cite{Ko}, for every $\nu<1-\frac{N}{p}$,
the canonical imbedding $W^{2,p}(\O)\hookrightarrow \mc{C}^{1,\nu}(\bar\O)$ is compact (see
Theorem 4.5 of \cite{LG13}). Thus, we can extract a subsequence $\{(\l_{n_{k}},u_{n_{k}})\}_{k\in\mathbb{N}}$ such that, for some $(\l_{\o},u_{\o})\in [\l_-,\l_+]\times \mc{C}^{1,\nu}(\bar\O)$,
\begin{equation}
\label{vi.4}
		\lim_{k\to \infty} \l_{n_{k}} = \l_{\o} \quad \hbox{and}\quad
\lim_{k\to\infty} u_{n_{k}} =u_{\o} \quad \text{in } \mc{C}^{1,\nu}(\bar\O).
\end{equation}
As a direct consequence of \eqref{vi.2}, \eqref{vi.3} and \eqref{vi.4}, it becomes apparent that
$u_{\o}$ must be a weak solution of the linear elliptic problem
\begin{equation}
\label{vi.5}
\left\{\begin{array}{ll}
			-\operatorname{div}\!\left(\dfrac{\nabla u}{\sqrt{1-\lambda_\o |\nabla u_\o|^{2}}}\right)
			-(\mu - u_{\o}^{q-1})u = f & \text{in } \Omega, \\[1.2ex]
			u=0 & \text{on } \partial\Omega.
\end{array}		\right.
\end{equation}
By elliptic regularity, $u_{\o}\in W^{2,p}_{0}(\O)$ and $f=\mf{F}(\l_{\o},u_{\o})$. Therefore, $f\in \mf{F}(K)$.
\par
Now, pick $f\in L^{p}(\O)$. To show that $\mf{F}^{-1}(f)\cap K$ is compact in $[\l_-,\l_+]\times W^{2,p}_{0}(\O)$, let $\{(\l_{n},u_{n})\}_{n\in\mathbb{N}}$ be a sequence in $\mf{F}^{-1}(f)\cap K$. Then,
\begin{equation}
\label{vi.6}
		\mf{F}(\l_{n},u_{n})=f \quad \hbox{for all}\;\; n\in\mathbb{N}.
\end{equation}
Again by the compactness of the imbedding $W^{2,p}(\O)\hookrightarrow \mc{C}^{1,\nu}(\bar\O)$,
there is a subsequence $\{(\l_{n_{k}},u_{n_{k}})\}_{k\in\mathbb{N}}$ such that \eqref{vi.4} holds for some $(\l_{\o},u_{\o})\in [\l_-,\l_+]\times \mc{C}^{1,\nu}(\bar\O)$. Thus, reasoning as above,
$u_{\o}\in \mc{C}^{1,\nu}(\bar\O)$ is a weak solution of \eqref{vi.5} and, by elliptic regularity,
$u_{\o}\in W^{2,p}_{0}(\O)$ and $\mf{F}(\l_{\o},u_{\o})=f$. In particular, for every $k\in\mathbb{N}$,
\begin{equation*}
		-\text{div}\left(\frac{\nabla u_{n_k}}{\sqrt{1-\l_{n_k}|\nabla u_{n_k}|^{2}}}-
        \frac{\nabla u_\o}{\sqrt{1-\l_\o |\nabla u_\o|^{2}}}\right)=\mu (u_{n_k}-u_{\o})-(u_{n_k}^{q}-u_{\o}^{q}) \quad \hbox{in}\;\; \O.
\end{equation*}
Equivalently,
\begin{equation*}
    	-\text{div}\left(\frac{\nabla(u_{n_{k}}-u_{\o})}{\sqrt{1-\l_{\o}|\nabla u_{\o}|^{2}}}\right) = \mu (u_{n_k}-u_{\o})-(u_{n_k}^{q}-u_{\o}^{q}) + \text{div}\left(A_{k}(x)\nabla u_{n_k}\right),
\end{equation*}
where
\begin{equation*}
    	A_{k}:=\frac{1}{\sqrt{1-\l_{n_k}|\nabla u_{n_k}|^{2}}}-\frac{1}{\sqrt{1-\l_{\o}|\nabla u_{\o}|^{2}}}.
\end{equation*}
By some standard $L^p$-elliptic estimates (see, e.g., Gilbarg and Trudinger  \cite{GT}), there is a positive constant $C>0$ such that
\begin{equation*}
		\|u_{n_{k}}-u_{\o}\|_{W_0^{2,p}(\O)}\leq C \left( \|u_{n_{k}}-u_{\o}\|_{L^{p}(\O)}+ \|u^{q}_{n_{k}}-u^{q}_{\o}\|_{L^{p}(\O)}+\|\text{div}(A_{k}\nabla u_{n_k})\|_{L^{p}(\O)}\right)
\end{equation*}
for all $k\in\N$. On the other hand, since $\{u_{n_k}\}_{k\in\N}$ is bounded in $W^{2,p}(\O)$, it is relatively compact in $\mc{C}^{1,\nu}(\bar\O)$. Therefore, letting $k\to \infty$ in the previous estimates yields $$
	\lim_{k\to \infty} (\l_{n_{k}},u_{n_{k}}) = (\l_{\o},u_{\o})\quad \hbox{in}\;\;
	[\l_-,\l_+] \times W^{2,p}_{0}(\O).
$$
This ends the proof.
\end{proof}

In the special case when $\l=0$, the problem \eqref{vi.1} becomes
\begin{equation}
\label{vi.7}
\left\{		\begin{array}{ll}
			-\Delta u 	= \mu u - u^{q} & \text{in } \Omega, \\[1ex]
			u=0 & \text{on } \partial\Omega.
\end{array}		\right.
\end{equation}
For this problem, the next result is already classical (see, e.g., the monograph \cite{LG15} and the references therein).

\begin{lemma}
\label{le6.3}
For every $\mu>\s_1$, the problem \eqref{vi.7} admits a unique positive solution $u_{0}\in W^{2,p}_{0}(\O)$. Moreover,  $D_{u}\mf{F}(0,u_{0})\in GL(W^{2,p}_{0}(\O), L^{p}(\O))$.
\end{lemma}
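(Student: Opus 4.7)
The plan is to establish the three claims (existence, uniqueness, and non-degeneracy) by standard sub/supersolution techniques together with a monotonicity argument for the principal eigenvalue. Throughout, let $\varphi_{1}>0$ denote the $L^{2}$-normalized principal eigenfunction associated with $\sigma_{1}$.

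\textbf{Existence.} First I would produce an ordered pair of sub- and supersolutions. Since $\mu>\sigma_{1}$, for $\varepsilon>0$ sufficiently small the function $\underline{u}:=\varepsilon \varphi_{1}$ satisfies
\[
-\Delta \underline{u}-\mu \underline{u}+\underline{u}^{q}=(\sigma_{1}-\mu)\varepsilon\varphi_{1}+\varepsilon^{q}\varphi_{1}^{q}\leq 0 \quad\text{in }\Omega,
\]
because the linear negative term dominates the superlinear positive one for small $\varepsilon$. On the other hand, any constant $\bar{u}\equiv M\geq\mu^{1/(q-1)}$ gives $-\Delta\bar{u}-\mu\bar{u}+\bar{u}^{q}\geq 0$, so $\bar{u}$ is a supersolution. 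Choosing $\varepsilon$ small enough that $\underline{u}\le \bar{u}$, the classical monotone iteration scheme (e.g.\ Amann) delivers a classical solution $u_{0}$ with $\underline{u}\leq u_{0}\leq \bar{u}$. Standard $L^{p}$-regularity then places $u_{0}\in W^{2,p}_{0}(\Omega)$ for any $p>N$, and the strong maximum principle plus Hopf's lemma guarantee that $u_{0}>0$ in $\Omega$ with $\partial_{\nu}u_{0}<0$ on $\partial\Omega$.

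\textbf{Uniqueness.} I would invoke the Brezis--Oswald argument: the nonlinearity $f(s):=\mu s-s^{q}$ satisfies that $s\mapsto f(s)/s=\mu-s^{q-1}$ is strictly decreasing on $(0,+\infty)$ because $q>1$. If $u_{1},u_{2}$ are two positive solutions, testing $-\Delta u_{1}/u_{1}-(-\Delta u_{2}/u_{2})$ against $u_{1}^{2}-u_{2}^{2}$ via Picone's identity yields
\[
0\leq \int_{\Omega}\bigl(|\nabla u_{1}|^{2}-\nabla u_{2}\cdot\nabla(u_{1}^{2}/u_{2})\bigr)+\bigl(|\nabla u_{2}|^{2}-\nabla u_{1}\cdot\nabla(u_{2}^{2}/u_{1})\bigr) = \int_{\Omega}(u_{2}^{q-1}-u_{1}^{q-1})(u_{1}^{2}-u_{2}^{2}),
\]
whose right-hand side is $\leq 0$ by monotonicity of $s^{q-1}$, and vanishes only if $u_{1}\equiv u_{2}$. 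Alternatively one can appeal directly to the uniqueness theorem of Brezis--Oswald.

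\textbf{Non-degeneracy of $D_{u}\mathfrak{F}(0,u_{0})$.} At $\lambda=0$ the Fréchet derivative acts as
\[
L[v]:=-\Delta v+\bigl(q u_{0}^{q-1}-\mu\bigr)v,
\]
which is a self-adjoint Schrödinger operator on $L^{2}(\Omega)$ with domain $W^{2,2}_{0}(\Omega)$. By Lemma \ref{le6.1} it is already Fredholm of index zero as an operator $W^{2,p}_{0}(\Omega)\to L^{p}(\Omega)$, so invertibility reduces to triviality of its kernel. The equation $\mathfrak{F}(0,u_{0})=0$ reads $-\Delta u_{0}+(u_{0}^{q-1}-\mu)u_{0}=0$, so $u_{0}>0$ is a principal eigenfunction of the Schrödinger operator $L_{0}:=-\Delta+(u_{0}^{q-1}-\mu)$ with eigenvalue $0$; by Krein--Rutman this is the principal eigenvalue, $\sigma_{1}[L_{0}]=0$. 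Since $q>1$ and $u_{0}>0$ in $\Omega$, the potential of $L$ strictly dominates that of $L_{0}$ pointwise,
\[
q u_{0}^{q-1}-\mu\;>\;u_{0}^{q-1}-\mu \quad \text{in }\Omega,
\]
and the strict monotonicity of the principal eigenvalue with respect to the potential (see, e.g., \cite{LG15}) yields $\sigma_{1}[L]>\sigma_{1}[L_{0}]=0$. Hence every eigenvalue of the self-adjoint operator $L$ is strictly positive, so $N[L]=\{0\}$ and, being Fredholm of index zero, $L=D_{u}\mathfrak{F}(0,u_{0})\in GL(W^{2,p}_{0}(\Omega),L^{p}(\Omega))$.

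The most delicate ingredient is the uniqueness step, because it really does require the full strength of the sublinear/superlinear distinction via Brezis--Oswald (or an equivalent Picone identity); existence and non-degeneracy, by contrast, follow rather mechanically from standard barrier and eigenvalue monotonicity arguments.
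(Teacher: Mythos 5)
The paper does not prove Lemma~\ref{le6.3}; it simply declares the result classical and cites the monograph \cite{LG15}. Your proposal fills this in with a complete, correct argument along exactly the lines one would find in the standard treatments: sub/supersolutions $\varepsilon\varphi_1 \le M$ for existence, a Picone/Brezis--Oswald inequality exploiting the strict monotonicity of $s\mapsto f(s)/s = \mu - s^{q-1}$ for uniqueness, and a principal-eigenvalue comparison $\sigma_1[-\Delta + qu_0^{q-1}-\mu] > \sigma_1[-\Delta + u_0^{q-1}-\mu] = 0$ for the non-degeneracy of the linearization. All three steps are sound: the subsolution inequality $(\mu-\sigma_1)\ge \varepsilon^{q-1}\varphi_1^{q-1}$ holds for small $\varepsilon$ since $q>1$ and $\varphi_1$ is bounded; the Picone computation and its sign analysis are correct (one should just be aware that the boundary integration by parts with the test function $u_1^2/u_2$ requires the Hopf lemma bound $u_1/u_2 \in L^\infty$ near $\partial\Omega$, which you have via the strong positivity you established); and the kernel-triviality argument correctly uses the Fredholm index-zero property from Lemma~\ref{le6.1} together with the fact that $0$ is the \emph{principal} (hence smallest, by self-adjointness) eigenvalue of $L_0$ because $u_0>0$ is an eigenfunction. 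Since the paper supplies no proof of its own to compare against, the only remark is that your argument is the one the cited reference effectively packages, so there is no genuine divergence in approach.
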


The next strong positivity result will be very useful later.

\begin{lemma}
\label{le6.4} Any positive solution $u \in W_0^{2,p}(\O)$ of \eqref{vi.1} satisfies $u\gg 0$ in the sense that $u(x)>0$ for all $x\in\O$ and $\frac{\p u}{\p n}(x)<0$ for all $x\in\p\O$, where $n$ stands for the outward
	unit normal to $\O$ along $\p\O$.
\end{lemma}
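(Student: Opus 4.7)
The plan is to freeze the quasilinear coefficients along the given solution $u$, so that \eqref{vi.1} becomes a linear uniformly elliptic equation to which the classical strong maximum principle and Hopf boundary point lemma apply. Since $p>N$, $u\in W^{2,p}_{0}(\O)$ embeds into $\mc{C}^{1,1-N/p}(\bar\O)$, so $\nabla u$ is H\"{o}lder continuous on $\bar\O$. Consequently the matrix
\begin{equation*}
\tilde A(x) := \frac{I_{N}}{\sqrt{1-\l|\nabla u(x)|^{2}}} + \l\,\frac{\nabla u(x)\otimes \nabla u(x)}{\bigl(1-\l|\nabla u(x)|^{2}\bigr)^{3/2}}
\end{equation*}
has H\"{o}lder continuous entries on $\bar\O$ and, by the computation performed in the proof of Lemma \ref{le6.1}, is uniformly elliptic. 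Equation \eqref{vi.1} may then be rewritten as
\begin{equation*}
-\operatorname{div}\bigl(\tilde A(x)\nabla u\bigr) + c(x)\,u = 0 \quad \text{in } \O, \qquad c(x) := u^{q-1}(x)-\mu,
\end{equation*}
which is \emph{linear} in $u$ with H\"{o}lder continuous coefficients. A Schauder bootstrap applied to this linear equation upgrades $u$ to $\mc{C}^{2,\b}(\bar\O)$ for some $\b\in(0,1)$, so \eqref{vi.1} is satisfied in the classical sense.

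To handle the lack of sign of $c$, I would employ the standard shift trick: pick $K \geq \|c\|_{\infty}+1$ and rewrite
\begin{equation*}
-\operatorname{div}\bigl(\tilde A(x)\nabla u\bigr) + \bigl(c(x)+K\bigr)u = Ku \geq 0 \quad \text{in } \O,
\end{equation*}
where the zero-order coefficient $c+K$ is now non-negative and the inequality uses $u\geq 0$. This places the problem within the standard hypotheses of the strong minimum principle for linear uniformly elliptic operators (see, e.g., Gilbarg and Trudinger \cite{GT}, Theorem 3.5). Since $u\not\equiv 0$, $u\geq 0$ on $\bar\O$ and $u|_{\p\O}=0$, $u$ cannot attain the value $0$ at an interior point without being identically zero, so $u(x)>0$ for every $x\in \O$. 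Finally, as $\p\O\in\mc{C}^{3}$, the interior sphere condition holds at every boundary point, and Hopf's boundary point lemma (\cite{GT}, Lemma 3.4) applied to the same shifted operator yields $\frac{\p u}{\p n}(x)<0$ for all $x\in\p\O$.

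The only subtle point I anticipate is that the classical versions of the strong minimum principle and Hopf's lemma are usually stated for $\mc{C}^{2}$ solutions, whereas $u$ is \emph{a priori} only in $W^{2,p}_{0}(\O)$. This is precisely what the preliminary Schauder bootstrap takes care of: once the coefficients $\tilde A$ and $c$ are known to be H\"{o}lder continuous, linear elliptic regularity promotes $u$ to $\mc{C}^{2,\b}(\bar\O)$. Alternatively, one may invoke the versions of the strong maximum principle and Hopf's lemma valid for strong $W^{2,p}$-solutions of uniformly elliptic operators with bounded measurable coefficients, which apply directly without appealing to classical regularity.
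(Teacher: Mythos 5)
Your overall strategy — freeze the quasilinear coefficients along the given solution to obtain a linear uniformly elliptic equation, shift the zero-order coefficient to make it non-negative, and then invoke the strong minimum principle and Hopf's boundary lemma — is exactly the paper's approach. However, the specific coefficient matrix you choose to freeze is wrong, and the identity you base the argument on fails.

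You claim that equation \eqref{vi.1} can be rewritten in divergence form as $-\operatorname{div}(\tilde A(x)\nabla u)+c(x)u=0$, where $\tilde A$ is the Jacobian matrix of the map $p\mapsto p/\sqrt{1-\lambda|p|^{2}}$ evaluated at $p=\nabla u(x)$ (i.e.\ the coefficient matrix of the \emph{linearization} $D_{u}\mathfrak{F}$). This is not the same vector field as the one appearing in \eqref{vi.1}. A direct computation gives
\begin{equation*}
\tilde A(x)\nabla u=\frac{\nabla u}{\sqrt{1-\lambda|\nabla u|^{2}}}+\lambda\frac{|\nabla u|^{2}\,\nabla u}{(1-\lambda|\nabla u|^{2})^{3/2}}=\frac{\nabla u}{(1-\lambda|\nabla u|^{2})^{3/2}}\neq \frac{\nabla u}{\sqrt{1-\lambda|\nabla u|^{2}}}
\end{equation*}
whenever $\lambda\neq 0$ and $\nabla u\neq 0$, so $\operatorname{div}(\tilde A\nabla u)$ is \emph{not} the left-hand side of \eqref{vi.1}. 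The matrix $\tilde A$ is the correct coefficient matrix only when the operator is written in \emph{non-divergence} form, $\operatorname{div}\!\bigl(\tfrac{\nabla u}{\sqrt{1-\lambda|\nabla u|^{2}}}\bigr)=\sum_{i,j}\tilde a_{ij}(x)\,\partial_{x_{i}x_{j}}u$ by the chain rule, not in divergence form. If you want a divergence-form rewriting (as the paper uses), the correct frozen coefficient is the \emph{scalar} $A(x)=(1-\lambda|\nabla u(x)|^{2})^{-1/2}$, giving $-\operatorname{div}(A(x)\nabla u)+(u^{q-1}-\mu)u=0$; alternatively, you may keep $\tilde A$ but work with the non-divergence form and apply the strong maximum principle and Hopf lemma for that class of operators. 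With either correction the rest of your argument (Schauder or $W^{2,p}$ bootstrap, shift by a large constant, strong minimum principle, Hopf--Oleinik boundary lemma) goes through and coincides with the paper's proof, which relies on Bony's version of the maximum principle directly in $W^{2,p}$ rather than a preliminary bootstrap to $\mc{C}^{2,\beta}$.
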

\begin{proof}
Thanks to a result of Bony \cite{Bo}, the Hopf maximum principle, \cite{Hoa}, and the boundary lemma of Hopf \cite{Hob} and Oleinik \cite{Ol} work out in  $W^{2,p}(\O)$ (see, e.g., \cite{LG13}). 	Suppose $u_{0}
\in W_0^{2,p}(\O)$ is a positive solution of \eqref{vi.1}. Then, $u_{0}$ satisfies the linear elliptic equation
$$
	-\operatorname{div}\!\left(A(x)\nabla u\right)+b(x)u=0\quad \text{in} \;\; \Omega,
$$
where
$$
A(x):=\dfrac{1}{\sqrt{1-\lambda |\nabla u_{0}(x)|^{2}}}, \qquad b(x):=u_{0}^{q-1}(x)-\mu, \quad x\in {\O}.
$$
Thanks to the embedding $W^{2,p}(\O)\hookrightarrow \mathcal{C}^{1,1-\frac{N}{p}}(\bar{\Omega})$, the
underlying differential  operator
$$
    \mc{L}u:=-\operatorname{div}\!\left(A(x)\nabla u\right)+b(x)u
$$
is uniformly elliptic with bounded coefficients. Consider a sufficiently large constant, $\o>0$, such that $c:=b+\o\geq 0$ in $\O$. Then,
$$
	- \operatorname{div}\!\left(A(x)\nabla u_{0}\right)+ c(x)u_{0}=\o u_{0} \gneq 0\quad\hbox{in}\;\;\O.
$$
Thus, since $\min_{\bar\O}u_{0} =0$, it follows from the Hopf minimum principle that $u_0$ cannot reach
the value $0$ in $\O$ unless $u_0=0$. Thus, since $u_0\gneq 0$, necessarily $u_0(x)>0$ for all $x\in\O$. Moreover, since $\O$ is smooth, by the boundary lemma of Hopf \cite{Hob} and Oleinik \cite{Ol}, $\frac{\p u_0}{\p n}(x)<0$ for all $x\in\p\O$. This ends the proof.
\end{proof}

As a byproduct of Lemma \ref{le6.4}, the next result holds.

\begin{lemma}
\label{le6.5}
Let $\{(\l_{n},u_{n})\}_{n\in\N}\subset \mf{F}^{-1}(0)$ be such that $u_{n}\geq 0$ for all  $n\in\N$ and
\begin{equation}
\label{vi.8}
	\lim_{n\to+\infty}(\l_{n},u_n)=(\l_{0},u_0)\in\mf{F}^{-1}(0) \quad \text{in} \;\; \R\times W^{2,p}_{0}(\O).
\end{equation}
Then, either $u_{0}\gg 0$, or $u_{0}=0$.
\end{lemma}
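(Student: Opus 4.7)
The plan is to reduce Lemma \ref{le6.5} to the strong positivity principle of Lemma \ref{le6.4} by transferring the pointwise nonnegativity of the sequence $\{u_n\}$ to the limit $u_0$, and then splitting into the two cases "$u_0$ identically zero" versus "$u_0$ nontrivial nonnegative".

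First I would exploit the hypothesis $p>N$, which makes the Sobolev embedding $W^{2,p}_0(\Omega)\hookrightarrow \mathcal{C}^{1,1-N/p}(\bar\Omega)$ continuous. Consequently, the convergence $u_n\to u_0$ in $W^{2,p}_0(\Omega)$ implies convergence in $\mathcal{C}^1(\bar\Omega)$, and in particular uniform convergence on $\bar\Omega$. Since $u_n\geq 0$ in $\bar\Omega$ for every $n\in\mathbb{N}$, passing to the limit gives $u_0\geq 0$ in $\bar\Omega$. Moreover, by assumption $(\lambda_0,u_0)\in\mathfrak{F}^{-1}(0)$, so that $u_0\in W^{2,p}_0(\Omega)$ is a weak, hence strong, solution of \eqref{vi.1} with parameter $\lambda=\lambda_0$.

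Now the dichotomy is forced: either $u_0\equiv 0$ in $\bar\Omega$, in which case the second alternative of the statement holds; or $u_0\gneq 0$, i.e. $u_0$ is a nonnegative, nontrivial solution of \eqref{vi.1}. In the latter case, $u_0$ is precisely what Lemma \ref{le6.4} calls a \emph{positive solution} (its proof only uses $u_0\gneq 0$, since it argues that $\min_{\bar\Omega}u_0=0$ combined with $u_0\not\equiv 0$ is forbidden by the Hopf minimum principle and the Hopf--Oleinik boundary lemma applied to the uniformly elliptic operator $\mathcal{L}$ associated with $(\lambda_0,u_0)$). Applying that lemma yields $u_0\gg 0$, which is the first alternative.

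No substantial obstacle arises: the only subtle point is ensuring that the pointwise nonnegativity is preserved in the limit, which is automatic thanks to the continuous Sobolev embedding afforded by $p>N$. Once this is in place, Lemma \ref{le6.4}, which already encapsulates the Hopf maximum principle in $W^{2,p}(\Omega)$ via the result of Bony \cite{Bo}, provides the strict positivity conclusion in one stroke, and no further elliptic machinery is needed.
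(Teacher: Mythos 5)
Your proof is correct and follows essentially the same route as the paper's: convergence in $W^{2,p}_{0}(\Omega)$ together with the embedding $W^{2,p}_{0}(\Omega)\hookrightarrow \mathcal{C}^{1,1-N/p}(\bar\Omega)$ (valid since $p>N$) yields uniform, hence pointwise, convergence, so $u_0\geq 0$; the dichotomy $u_0\equiv 0$ or $u_0\gneq 0$ then lets Lemma~\ref{le6.4} finish. Your explicit remark that Lemma~\ref{le6.4} only needs $u_0\gneq 0$ is a worthwhile clarification (the paper's wording ``$u_n\gg 0$'' is slightly looser than the stated hypothesis $u_n\geq 0$, which also permits $u_n\equiv 0$), but this does not change the substance of the argument.
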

\begin{proof}
By \eqref{vi.8}, it follows from the Sobolev embedding $W^{2,p}(\O)\hookrightarrow \mc{C}^{1,1-\frac{N}{p}}(\bar\O)$ that  $u_{0}$ is the pointwise limit of a sequence of functions $u_{n}\gg 0$, $n\in\mathbb{N}$. Thus, either $u_{0}=0$, or $u_{0}\gneq 0$. In the later case, by Lemma \ref{le6.4},
$u_0\gg 0$. The proof is complete.
\end{proof}

The next result provides us with some a priori bounds for the positive solutions of \eqref{vi.1}.

\begin{lemma}
\label{l.6}
For every positive solution $u\in W^{2,p}_{0}(\O)$ of \eqref{vi.1}, the next estimate holds  	
\begin{equation}
\label{vi.9}
	\|u\|_{\infty}\leq \mu^{\frac{1}{q-1}}.
\end{equation}
\end{lemma}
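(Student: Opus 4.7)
The plan is to derive the bound by a classical testing argument based on the natural \emph{a priori} estimate that comes from the sign of the reaction term $\mu u - u^{q}$. Set
\[
M:=\mu^{\frac{1}{q-1}},
\]
and note that, on the set where $u\geq M$, one has $u^{q-1}\geq \mu$, so that $\mu u - u^{q}=u(\mu-u^{q-1})\leq 0$, with strict inequality on $\{u>M\}$. The idea is to multiply the equation by $(u-M)_{+}$ and extract a contradiction from sign considerations.

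First I would check that $\varphi:=(u-M)_{+}$ is an admissible test function. Since $u\in W^{2,p}_{0}(\O)\hookrightarrow \mc{C}^{1,1-N/p}(\bar\O)$ is continuous and vanishes on $\p\O$, and $M>0$, the set $\{u>M\}$ is compactly contained in $\O$. Hence $\varphi\in W^{1,p}_{0}(\O)$ with compact support in $\O$, and $\nabla\varphi=\nabla u\cdot \mathbbm{1}_{\{u>M\}}$ a.e. Multiplying \eqref{vi.1} by $\varphi$ and integrating by parts (no boundary contribution arises), I obtain
\[
\int_{\{u>M\}}\frac{|\nabla u|^{2}}{\sqrt{1-\l|\nabla u|^{2}}}\,dx
\;=\;\int_{\{u>M\}}(\mu u-u^{q})(u-M)\,dx.
\]
Here I use that $u\in\mc{U}_{\d}$ for some $\d>0$ (otherwise the nonlinear operator $\mf{F}$ is not even defined at $u$), so that $1-\l|\nabla u(x)|^{2}\geq \d>0$ for all $x\in\bar\O$ and the square root in the flux makes pointwise sense.

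Next I would argue that both sides must vanish. The left-hand side is manifestly nonnegative. On $\{u>M\}$, the pointwise identity
\[
(\mu u-u^{q})(u-M)=u(\mu-u^{q-1})(u-M)
\]
is the product of a strictly positive factor $u(u-M)>0$ and the strictly negative factor $\mu-u^{q-1}<0$, since $u>M$ forces $u^{q-1}>M^{q-1}=\mu$ (as $q-1>0$). Therefore the right-hand side is nonpositive, and strictly negative on any subset of $\{u>M\}$ of positive measure. The only way both sides can agree is $|\{u>M\}|=0$. Since $u$ is continuous on $\bar\O$, this yields $\|u\|_{\infty}\leq M=\mu^{1/(q-1)}$, which is \eqref{vi.9}.

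The only delicate point is the justification of the integration by parts: one must ensure that the divergence-form term
$
\mathrm{div}\bigl(\nabla u/\sqrt{1-\l|\nabla u|^{2}}\bigr)
$
makes sense and that testing against $\varphi$ produces the displayed identity. This is routine because $u\in W^{2,p}_{0}(\O)$ with $p>N$ gives $\nabla u\in L^{\infty}(\O)$ and $1-\l|\nabla u|^{2}\geq \d$ throughout $\bar\O$, so the flux is a bounded $W^{1,p}$ vector field, $\varphi\in W^{1,p}_{0}(\O)$ has compact support inside $\O$, and the standard Green's formula applies. No use of the maximum principle is needed beyond what Lemma \ref{le6.4} has already provided (positivity of $u$ in $\O$ and vanishing on $\p\O$), which guarantees that $\mathrm{supp}\,\varphi\Subset\O$.
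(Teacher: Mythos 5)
Your proof is correct and uses essentially the same argument as the paper: test the equation against the truncation $(u-\kappa)_{+}$ on the superlevel set and exploit the sign of $\mu u-u^{q}$. The only cosmetic difference is that the paper takes $\kappa>\mu^{1/(q-1)}$ and deduces $\nabla(u-\kappa)_{+}=0$ a.e.\ from the vanishing of the left-hand side, while you work with $\kappa=\mu^{1/(q-1)}$ directly and conclude $|\{u>\kappa\}|=0$ from the strict negativity of the right-hand integrand; both are valid.
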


\begin{proof}
Let $u\in W^{2,p}_{0}(\O)$ be a positive solution of \eqref{vi.1}, fix a $\kappa >\mu^{\frac{1}{q-1}}$, and consider the function
$$
	\phi:=(u-\kappa)_{+}\in W^{1,1}_{0}(\O).
$$
Then, $\nabla \phi = \nabla u$ in $\{u>k\}$, and, considering $\phi$ as a test function, we obtain that
\begin{align*}
	0 \leq \int_{\{u>k\}}\frac{|\nabla u|^{2}}{\sqrt{1-\l |\nabla u|^{2}}} \; dx & = \int_{\{u>k\}}(\mu u- u^{q}) (u-k) \; dx \\
	& \leq \int_{\{u>k\}}(\mu-k^{q-1})u(u-k) \; dx \leq 0.
\end{align*}
Therefore, for every $k>\mu^{\frac{1}{q-1}}$, we have that $\nabla u=0$ a.e. in $\{u>k\}$. Hence, $\nabla(u-k)_{+}=0$ almost everywhere and, since $\phi\in W^{1,1}_{0}(\O)$, we can infer that $\phi\equiv 0$. This implies \eqref{vi.9} and ends the proof.
\end{proof}

Note that, for every $\l>0$, the two estimates
$$
   \|u\|_{\infty}\leq \mu^{\frac{1}{q-1}} \;\; \text{  and  } \;\;  \|\nabla u \|_{\infty} < \sqrt{\tfrac{1-\d}{\l}}
$$
hold. Since the corresponding operator is uniformly elliptic once the gradient estimate is established, by standard $L^p$-regularity theory (see \cite[Ch. 9]{GT}, if necessary), we have the following:

\begin{lemma}
	\label{l.6II}
	Let $\l>0$. Then, every $(\l,u)\in \mc{U}_{\d}$ satisfies the estimate
	\begin{equation}
		\label{vi.9II}
		\|u\|_{W^{2,p}(\O)}\leq C,
	\end{equation}
for some positive constant $C>0$ depending on $\l$.
\end{lemma}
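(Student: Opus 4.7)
The plan is to derive the $W^{2,p}$--estimate by recasting \eqref{vi.1} as a linear second-order elliptic equation with uniformly bounded coefficients, and then invoking Calder\'{o}n--Zygmund $L^{p}$--regularity up to the boundary.

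First, I would combine the two a priori bounds already at hand: Lemma~\ref{l.6} provides $\|u\|_{\infty}\leq \mu^{1/(q-1)}$, while the definition of $\mc{U}_{\d}$ together with $\l>0$ yields $\|\nabla u\|_{\infty} < \sqrt{(1-\d)/\l}$. Expanding the divergence in the mean-curvature/Minkowski operator recasts \eqref{vi.1} in non-divergence form as
$$
-\sum_{i,j=1}^{N} A_{ij}(\nabla u(x))\,\partial_{ij} u(x) \;=\; \mu u - u^{q} \quad\hbox{in}\;\;\O, \qquad u=0 \;\;\hbox{on}\;\;\p\O,
$$
with
$$
A_{ij}(\xi) \;=\; \frac{\d_{ij}}{\sqrt{1-\l|\xi|^{2}}} \;+\; \l\,\frac{\xi_{i}\xi_{j}}{(1-\l|\xi|^{2})^{3/2}},
$$
so that the coefficient matrix $A(\nabla u(\cdot))$ coincides with the principal part of the linearization computed in Lemma~\ref{le6.1}.

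Second, I would quote verbatim the ellipticity computation carried out in the proof of Lemma~\ref{le6.1}: once the gradient bound is invoked, for $\l>0$ the coefficients $A_{ij}(\nabla u(x))$ are uniformly bounded in $L^{\infty}(\O)$ and the operator is uniformly elliptic with ellipticity constant $\geq 1$, with both bounds depending only on $\l$ and $\d$. Moreover, the Sobolev embedding $W^{2,p}_{0}(\O)\hookrightarrow \mc{C}^{1,1-N/p}(\bar{\O})$ makes $A_{ij}(\nabla u(\cdot))$ continuous on $\bar{\O}$, and the right-hand side $\mu u - u^{q}$ is bounded in $L^{\infty}(\O)$ by a constant depending only on $\mu$ and $q$ thanks to the preceding $L^\infty$--bound. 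Since $\p\O\in \mc{C}^{3}$ and the boundary datum vanishes, Theorem~9.13 of Gilbarg--Trudinger~\cite{GT} delivers
$$
\|u\|_{W^{2,p}(\O)} \;\leq\; C\bigl(\|\mu u-u^{q}\|_{L^{p}(\O)}+\|u\|_{L^{p}(\O)}\bigr),
$$
and combining with the $L^{\infty}$ bound closes the estimate.

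The main obstacle I anticipate is the uniformity of the constant $C$ across all solutions $(\l,u)\in \mc{U}_{\d}$ with a fixed $\l>0$: the Calder\'{o}n--Zygmund constant depends on a uniform modulus of continuity of the leading coefficients $A_{ij}(\nabla u)$, whereas the Sobolev embedding only provides a modulus depending on $\|u\|_{W^{2,p}}$---the very quantity to be bounded. I would resolve this via a one-step bootstrap: since $\|\nabla u\|_{\infty}$ is already controlled by $\l$ and $\d$, the classical $C^{1,\a}$--regularity theory for quasilinear divergence-form equations with bounded gradient (Ladyzhenskaya--Ural'tseva; see, e.g., Chapter~13 of~\cite{GT}) furnishes a uniform $C^{1,\a}(\bar{\O})$--bound on $u$ depending only on $\l,\d,\mu,q$ and $\O$. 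This confers a uniform H\"{o}lder modulus on $A_{ij}(\nabla u)$, unlocking a $\l$-dependent constant $C$ in the $L^{p}$--estimate, as required.
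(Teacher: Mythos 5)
Your argument follows the same route the paper indicates, namely: combine the $L^{\infty}$ bound from Lemma~\ref{l.6} with the gradient bound that $\l>0$ and the membership $(\l,u)\in\mc{U}_{\d}$ force, conclude that the frozen-coefficient linear problem is uniformly elliptic with coefficients bounded in terms of $\l$ and $\d$, and then invoke $L^{p}$ regularity up to the boundary. The paper's own treatment stops at the sentence preceding the lemma and a citation of \cite[Ch.~9]{GT}; it does not spell out anything further.

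Where you go beyond the paper is in flagging, and then resolving, the dependence of the Calder\'on--Zygmund constant on a uniform modulus of continuity of the leading coefficients $A_{ij}(\nabla u)$: since the Sobolev embedding gives only a modulus of continuity depending on $\|u\|_{W^{2,p}(\O)}$, this is a genuine circularity that the paper's one-line citation does not address. Your proposed fix---first apply the Ladyzhenskaya--Ural'tseva $\mc{C}^{1,\a}$ theory for quasilinear divergence-form equations (which requires only the $L^{\infty}$ and gradient bounds plus uniform ellipticity to give a uniform $\mc{C}^{1,\a}(\bar\O)$ estimate), thereby endowing $A_{ij}(\nabla u)$ with a uniform H\"older modulus, and only then invoke Theorem~9.13 of~\cite{GT}---is correct and is the standard way to close this loop. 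In short: same broad strategy, but your version is more careful than the paper's and fills a gap the paper glosses over. One small editorial note: as stated the lemma applies to all of $\mc{U}_{\d}$, but the $L^{\infty}$ bound from Lemma~\ref{l.6} is only available for positive solutions of \eqref{vi.1}; both your argument and the paper's implicitly restrict to such solutions, and the statement should probably be read that way.
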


Subsequently, we consider  $\mathscr{S}_\d:=\mc{U}_\d\cap \mf{F}^{-1}(0)$, and the sets
\[
   \mathcal{U}_{\d, 0}^{+}:=\mathcal{U}_{\d}\cap \big[(0,+\infty)\times W^{2,p}_{0}(\O)\big], \qquad
   \mathcal{U}_{\d,0}^{-}:=\mathcal{U}_{\d}\cap \big[(-\infty,0)\times W^{2,p}_{0}(\O)\big],
\]
\[
\mathscr{S}^{+}_{\d, 0}:=\{(\lambda,u)\in\mathscr{S}_\d:\lambda> 0\}, \qquad
\mathscr{S}^{-}_{\d, 0}:=\{(\lambda,u)\in\mathscr{S}_\d:\lambda< 0\}.
\]
Note that
$$
   \mc{U}_{\d,0}^{-} = (-\infty,0)\times W^{2,p}_{0}(\O)
$$
as a consequence of \eqref{Eq6.2}. Then, as a direct consequence from Theorem \ref{th5.1}, the next result holds.

\begin{theorem}
\label{th6.1}
Suppose $\mu>\s_1$ and $q\geq 2$ is integer. Then, for every $\d\in (0,1)$, there exist $\o^\pm \in \N\cup \{+\infty\}$ and two locally injective continuous curves of positive solutions of \eqref{vi.1},
$$
   \G^{\pm}: (0,\o^\pm)\longrightarrow \mc{U}_{\d, 0}^{\pm}, \qquad \Gamma^{\pm}\left((0,\o^\pm)\right)\subset \mathscr{S}_{\d, 0}^{\pm}, \qquad \lim_{t\da 0}\Gamma^{\pm}(t)=(0,u_{0}),
$$
where $u_0$ is the unique solution of \eqref{vi.7}, such that $\G^+$ satisfies some of the  following  alternatives:
\begin{enumerate}
\item[{\rm (i)}] It holds that $\mc{P}_{\l}(\Gamma^{+}((0,\o^+)))=(0,+\infty)$,  where $\mc{P}_{\l}:\R\times W^{2,p}_{0}(\O)\to \R$ stands for the $\l$-projection operator.
\item[{\rm (ii)}] There exists a sequence $\{t_{n}\}_{n\in\N}$ in $(0,\o^+)$ such that
$$
    \lim_{n\to +\infty}t_n=\o^+\;\;\hbox{and}\;\;
    \lim_{n\to+\infty} \Gamma^{+}(t_{n})= (\l_{\o^+},u_{\o^+})\in \partial\mc{U}_{\d,0}^+.
$$
\end{enumerate}
Similarly, either $\mc{P}_{\l}(\Gamma^{-}((0,\o^-)))=(-\infty,0)$, or
\begin{equation}
	\label{vi.10}
    \limsup_{t\ua \o^-} \|\nabla \mc{P}_u(\Gamma^-(t))\|_{\mc{C}(\bar\O)} = +\infty,
\end{equation}
where $\mc{P}_{u}:\R\times W^{2,p}_{0}(\O)\to W^{2,p}_0(\O)$ is the $u$-projection operator.
\end{theorem}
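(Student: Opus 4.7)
The plan is to verify the hypotheses of the analytic continuation Theorem \ref{th5.1} at the regular base point $(0,u_{0})$ and then translate the three alternatives (a), (b), (c) it provides into the geometric options stated for $\G^{\pm}$ here, using the a priori bounds and strong positivity collected in Lemmas \ref{le6.1}--\ref{l.6II}. First I would check that $\mf{F}:\mc{U}_{\d}\to L^{p}(\O)$ is admissible for Theorem \ref{th5.1}: real-analyticity of $\mf{F}$ follows from that of $s\mapsto 1/\sqrt{1-s}$ on $|s|<1$ together with $q\geq 2$ being an integer; (F1) is Lemma \ref{le6.1}, (F2) is Lemma \ref{le6.2}, and $D_{u}\mf{F}(0,u_{0})\in GL(W^{2,p}_{0}(\O),L^{p}(\O))$ is Lemma \ref{le6.3}. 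The theorem then furnishes locally injective continuous curves $\G^{\pm}:(0,\o^{\pm})\to\mc{U}^{\pm}_{\d,0}$ of zeroes of $\mf{F}$ with $\lim_{t\downarrow 0}\G^{\pm}(t)=(0,u_{0})$, each satisfying some combination of the alternatives (a)--(c) therein.

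Next I would establish strict positivity along the curves. Since $u_{0}\gg 0$ by Lemma \ref{le6.4} and the positive cone is open in $\mc{C}^{1,1-N/p}(\bar{\O})$, positivity holds for $t$ near $0$. By Lemma \ref{le6.5}, any convergent accumulation point $(\l_{\star},u_{\star})$ of the curve satisfies $u_{\star}\gg 0$ or $u_{\star}=0$. I would rule out $u_{\star}=0$ by a Krein--Rutman rescaling argument: if $(\l_{n},u_{n})\to(\l_{\star},0)$ in $\R\times W^{2,p}_{0}(\O)$ along the curve, the normalized profiles $w_{n}:=u_{n}/\|u_{n}\|_{\infty}$ solve a rescaled quasilinear problem whose coefficient matrix converges uniformly to the identity (because $\|u_{n}\|_{\infty}\to 0$ drives $\l_{n}\|u_{n}\|_{\infty}^{2}|\nabla w_{n}|^{2}\to 0$) and whose lower-order term tends to $\mu w_{n}$ (because $\|u_{n}\|_{\infty}^{q-1}\to 0$); standard $L^{p}$-regularity then bounds $w_{n}$ in $W^{2,p}_{0}(\O)$, and Arzelà--Ascoli delivers a subsequential limit $w_{\star}\geq 0$ with $w_{\star}\not\equiv 0$, $\|w_{\star}\|_{\infty}=1$, solving $-\D w_{\star}=\mu w_{\star}$ under homogeneous Dirichlet data. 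By Krein--Rutman the only Dirichlet eigenvalue of $-\D$ that admits a positive eigenfunction is $\s_{1}$, contradicting $\mu>\s_{1}$.

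Then I would translate the abstract alternatives. For $\G^{+}$, alternative (b) of Theorem \ref{th5.1} is exactly (ii); alternative (c) produces a limit $(0,u_{1})$ with $u_{1}\neq u_{0}$, hence $u_{1}=0$ by the previous step and Lemma \ref{le6.3}, so $(0,0)\in\p\mc{U}_{\d,0}^{+}$ and we are again in (ii). In alternative (a), Lemma \ref{l.6} bounds $\|u_{n}\|_{\infty}$, so either $\l_{n}\to+\infty$ along a subsequence --- and by connectedness of $\G^{+}$ emanating from $\l=0^{+}$ the projection $\mc{P}_{\l}(\G^{+}((0,\o^{+})))$ is then $(0,+\infty)$, giving (i) --- or $\|u_{n}\|_{W^{2,p}}\to+\infty$ with $\l_{n}$ bounded. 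In the latter sub-case, $\l_{n}$ cannot stay in a compact interval $[\a,\l_{\max}]$ with $\a>0$ since the Minkowski constraint combined with Lemma \ref{l.6II} would give a uniform $W^{2,p}$ bound; hence $\l_{n}\to 0^{+}$, and the properness in Lemma \ref{le6.2} applied on a suitable closed bounded region extracts a subsequence converging in $W^{2,p}_{0}(\O)$ to a point of $\p\mc{U}_{\d,0}^{+}$, which is (ii). For $\G^{-}$ we have $\p\mc{U}^{-}_{\d,0}=\{0\}\times W^{2,p}_{0}(\O)$, so alternatives (b)/(c) force a limit $(0,u_{1})$ with $u_{1}=0$, ruled out by the Krein--Rutman step; hence (a) holds, and a parallel case split --- using $L^{p}$-elliptic estimates that become uniform once $\|\nabla u\|_{\infty}$ and $\l$ are controlled on compact $\l$-subintervals of $(-\infty,0]$ --- reduces (a) to $\l_{n}\to-\infty$ (first alternative, via connectedness) or $\|\nabla u_{n}\|_{\infty}\to+\infty$ (second alternative).

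The main obstacle is the Krein--Rutman step: rigorously passing to the limit in the normalized quasilinear equation requires $W^{2,p}$-compactness of $\{w_{n}\}$, which one closes via elliptic $L^{p}$ estimates that become uniform precisely because $\|u_{n}\|_{\infty}\to 0$ drives the quasilinear coefficient matrix to the identity, and the contradiction with $\mu>\s_{1}$ then trips the alternative. A subsidiary delicate point is the extraction, in the sub-case $\l_{n}\to 0^{+}$ and $\|u_{n}\|_{W^{2,p}}\to+\infty$ along $\G^{+}$, of a $W^{2,p}_{0}$-convergent subsequence landing on the boundary; this hinges on the Minkowski geometry of $\p\mc{U}_{\d,0}^{+}$ and the properness asserted in Lemma \ref{le6.2}.
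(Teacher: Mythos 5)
Your overall strategy matches the paper's: verify the hypotheses of Theorem~\ref{th5.1} at $(0,u_0)$ using Lemmas~\ref{le6.1}--\ref{le6.3}, obtain $\Gamma^{\pm}$, and then translate alternatives (a)--(c) into (i)--(ii) (and into the two alternatives for $\Gamma^-$) via the a priori bounds of Lemmas~\ref{l.6} and~\ref{l.6II} and the fact that $\mathcal{U}^-_{\delta,0}=(-\infty,0)\times W^{2,p}_0(\Omega)$. The paper does this very tersely: it rules out alternative~(c) outright by invoking uniqueness of $u_0$, and under alternative~(a) it argues that a bounded $\lambda$-projection would contradict \eqref{vi.9II}. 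Your proposal is substantially more elaborate, and the Krein--Rutman rescaling argument you insert to exclude convergence of the curve to $(0,0)$ is a genuine addition: the paper's statement ``since $u_0$ is the unique solution of \eqref{vi.7}'' glosses over the trivial solution $u=0$ (and any sign-changing ones), so this step is a welcome patch, and it is correctly tied to Lemmas~\ref{le6.4}--\ref{le6.5}.

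There is, however, a genuine flaw in your treatment of alternative~(a) for $\Gamma^{+}$. In the sub-case $\lambda_n\to 0^{+}$ with $\|u_n\|_{W^{2,p}}\to+\infty$ you claim that ``the properness in Lemma~\ref{le6.2} applied on a suitable closed bounded region extracts a subsequence converging in $W^{2,p}_0(\Omega)$ to a point of $\partial\mathcal{U}^+_{\delta,0}$.'' This is self-contradictory: Lemma~\ref{le6.2} is properness on \emph{bounded} subsets of $\mathcal{U}_{\delta}$, and the very hypothesis of this sub-case, $\|u_n\|_{W^{2,p}(\Omega)}\to+\infty$, rules out both boundedness of the region and the existence of any $W^{2,p}_0$-convergent subsequence, so alternative~(ii) is not reached this way. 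What is actually needed to close this sub-case is a uniform $W^{2,p}$-bound for positive solutions on $\lambda$-intervals of the form $(0,\lambda_{\max}]$ (i.e., up to the degenerate endpoint $\lambda=0^{+}$), not merely the pointwise-in-$\lambda$ bound of Lemma~\ref{l.6II}. The paper sidesteps this by treating the contradiction with \eqref{vi.9II} as immediate, implicitly using that the constant there remains controlled as $\lambda\to 0^{+}$; if you want a self-contained argument you should either prove this uniform estimate (using the $L^\infty$ bound of Lemma~\ref{l.6} and the convergence of the quasilinear coefficients to those of the Laplacian as $\lambda\to 0^{+}$) or delete the $\lambda_n\to 0^{+}$ branch of the case split and argue directly that boundedness of $\mathcal{P}_{\lambda}(\Gamma^{+})$ is incompatible with~(a).
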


\begin{proof} Since $q\geq 2$ is an integer, the operator $\mf{F}$ is analytic and the existence of $\G^\pm$ is guaranteed by Theorem \ref{th5.1}. Moreover, each of
these curves satisfies some of the alternatives, (a), (b), or (c), of Theorem \ref{th5.1}. Actually,
since $u_0$ is the unique solution of \eqref{vi.7}, the alternative (c) cannot occur.
\par
Suppose that $\G^+$ satisfies Theorem \ref{th5.1}(a) and that $\mc{P}_{\l}(\Gamma^{+}((0,\o^+)))$ is a proper  subinterval of $(0,+\infty)$. Then, it is necessarily bounded. But this contradicts the estimate \eqref{vi.9II}. If $\G^+$ satisfies Theorem \ref{th5.1}(b), then (ii) holds.
\par
As, for every $\l<0$, $1-\lambda \|\nabla u\|^{2}_{\infty}>1>\d$, it becomes apparent that  $\G^-$ cannot satisfy (b). Thus, it satisfies Part (a) of Theorem \ref{th5.1}.
Suppose that $\mc{P}_{\l}(\Gamma^{-}((0,\o^-)))$ is a proper subinterval of $(-\infty,0)$. Then, it is necessarily bounded and thanks to \eqref{vi.9}, the relation \eqref{vi.10} holds. This concludes the proof.
\end{proof}

Similarly, according to Theorem \ref{th3.2} and playing around with the same ingredients as in the proof
of Theorem \ref{th6.1},  the next result holds in the general case when $q>1$ is not an integer.

\begin{theorem}
\label{th6.2}
Suppose $\mu>\s_1$ and $q\in(1,+\infty)\setminus\N$. Then, for every $\d\in (0,1)$, there are two connected
components, $\mathscr{C}_\d^\pm$, of positive solutions of $\mathscr{S}_\d$, with $(\l,u)=(0,u_0)\in\mathscr{C}_\d^\pm$, such that
$\mathscr{C}_\d^+$ satisfies some of the following alternatives:
\begin{enumerate}
\item[{\rm (i)}] $\mc{P}_{\l}(\mathscr{C}_\d^+)=(0,+\infty)$,
\item[{\rm (ii)}] $\mathscr{C}_\d^+\cap \p \mc{U}_{\d,0}^+\neq \emptyset$.
\end{enumerate}
Similarly, either $\mc{P}_{\l}(\mathscr{C}_\d^-)=(-\infty,0)$, or there exists $\l_*\leq 0$ and a sequence $\{(\l_n,u_n)\}_{n\geq 1}$ in $\mathscr{C}_\d^-$ such that
$$
   \lim_{n\to\infty}\l_n=\l_*\;\;\hbox{and}\;\; \limsup_{n\to \infty} \|\nabla u_n\|_{\mc{C}(\bar\O)} = +\infty.
$$
\end{theorem}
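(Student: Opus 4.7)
The plan is to reproduce the argument of Theorem \ref{th6.1}, with Theorem \ref{th3.2} replacing Theorem \ref{th5.1}, since for $q\in(1,+\infty)\setminus\mathbb{N}$ the map $\mathfrak{F}$ is only $\mathcal{C}^{1}$. The conclusion is now phrased in terms of connected components rather than parametrized curves, matching the output of Theorem \ref{th3.2}.

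First I would restrict the ambient space to the positive cone by setting
$$
  \mathcal{U}:=\mathcal{U}_{\delta}\cap\bigl(\mathbb{R}\times \mathcal{P}^{+}\bigr),\qquad \mathcal{P}^{+}:=\{u\in W^{2,p}_{0}(\Omega):u\gg 0\},
$$
which is open in $\mathbb{R}\times W^{2,p}_{0}(\Omega)$ thanks to the embedding $W^{2,p}_{0}(\Omega)\hookrightarrow\mathcal{C}^{1,1-N/p}(\bar{\Omega})$, and contains $(0,u_{0})$ by Lemma \ref{le6.4}. On $\mathcal{U}$, hypotheses (F1) and (F2) of Theorem \ref{th3.2} are supplied by Lemma \ref{le6.1} (Fredholm of index zero; orientability inherited from the canonical Leray--Schauder orientation coming from the decomposition $D_{u}\mathfrak{F}(\lambda,u)=[D_{u}\mathfrak{F}(\lambda,u)+\nu J]-\nu J$ of the proof of Lemma \ref{le6.1}, with the bracket invertible and $\nu J$ compact) and Lemma \ref{le6.2} (properness on closed and bounded subsets). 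Lemma \ref{le6.3} provides $D_{u}\mathfrak{F}(0,u_{0})\in GL$, so $i(\mathfrak{F}_{0},u_{0},\varepsilon_{0})=\varepsilon_{0}(u_{0})\neq 0$, and the uniqueness of $u_{0}$ in $\mathfrak{F}_{0}^{-1}(0)\cap \mathcal{U}_{0}$ makes it isolated.

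Applying Theorem \ref{th3.2} then produces two connected components $\mathscr{C}_{\delta}^{\pm}$, through $(0,u_{0})$, of the solution set $\mathfrak{F}^{-1}(0)\cap\mathcal{U}$ restricted to $\lambda\geq 0$ (resp.\ $\lambda\leq 0$), each satisfying one of the alternatives (i)--(iii) of that theorem; alternative (iii) is excluded by the uniqueness of the positive solution at $\lambda=0$ (Lemma \ref{le6.3}). For $\mathscr{C}_{\delta}^{+}$, the $W^{2,p}$-bound of Lemma \ref{l.6II} on any compact $\lambda$-subinterval of $(0,+\infty)$ forces the unbounded case to coincide with $\mathcal{P}_{\lambda}(\mathscr{C}_{\delta}^{+})=(0,+\infty)$ --- alternative (i) of the statement --- while meeting $\partial\mathcal{U}$ corresponds to touching the gradient threshold $\{1-\lambda\|\nabla u\|_{\infty}^{2}=\delta\}$ (or, via Lemma \ref{le6.5}, to approaching $\partial\mathcal{P}^{+}$), both captured by alternative (ii). For $\mathscr{C}_{\delta}^{-}$, the estimate $\|u\|_{\infty}\leq \mu^{1/(q-1)}$ of Lemma \ref{l.6} together with the inclusion \eqref{Eq6.2} imply that if $\mathcal{P}_{\lambda}(\mathscr{C}_{\delta}^{-})\neq (-\infty,0)$, then some sequence in $\mathscr{C}_{\delta}^{-}$ has bounded $\lambda_{n}$, bounded $\|u_{n}\|_{\infty}$, and $\|u_{n}\|_{W^{2,p}}\to\infty$; by the $L^{p}$-regularity scheme exploited in the proof of Lemma \ref{le6.2}, such divergence forces $\limsup\|\nabla u_{n}\|_{\mathcal{C}(\bar{\Omega})}=+\infty$, giving the second alternative.

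The main delicate point lies on the backward side, where in addition to the gradient blow-up mechanism the component could in principle also approach $\partial\mathcal{P}^{+}$ by accumulating onto the trivial branch $u\equiv 0$ inside $\mathcal{U}_{\delta}$, which would produce $\|\nabla u_{n}\|_{\infty}\to 0$ rather than blowing up. A standard rescaling argument ($v_{n}:=u_{n}/\|u_{n}\|$ bootstraps to a positive eigenfunction of $-\Delta$ with eigenvalue $\mu$) excludes this contingency under the generic assumption $\mu\notin\operatorname{Spec}(-\Delta)$; in the non-generic case the phenomenon must be absorbed into the second alternative as a limiting contact with the trivial branch. The remaining verifications --- orientability on the whole of $\mathcal{U}$, the isolated-zero index, and the explicit translation of the topological alternatives of Theorem \ref{th3.2} into the PDE-flavored alternatives of the statement --- are routine once one is in the Fitzpatrick--Pejsachowicz--Rabier framework recalled in Section~2.
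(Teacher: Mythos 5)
Your overall strategy is the one the paper intends: replace the analytic continuation theorem (Theorem \ref{th5.1}) by the topological one (Theorem \ref{th3.2}), since for non-integer $q>1$ the operator $\mathfrak{F}$ is only $\mathcal{C}^1$, and then translate the topological alternatives into the PDE alternatives exactly as in Theorem \ref{th6.1}. Your explicit restriction of the ambient domain to $\mathcal{U}_{\delta}\cap(\mathbb{R}\times\mathcal{P}^{+})$ is a sensible way to make precise the (rather terse) phrase ``connected component of positive solutions'' in the statement, and the verification of (F1)--(F2), the orientation via the compact-perturbation decomposition, and the nonvanishing index at $(0,u_0)$ are all correct. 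You also correctly recognize that excluding alternative (iii) of Theorem \ref{th3.2} via the uniqueness of $u_0$ is not automatic once one works in the positive cone, because the trivial branch $u\equiv 0$ sits on $\partial\mathcal{P}^{+}$ and could, a priori, be an accumulation set for the backward component.

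The one place where the argument goes astray is precisely the handling of that contingency. You invoke a rescaling $v_{n}:=u_{n}/\|u_{n}\|_{W^{2,p}}$ along a sequence approaching $(\lambda_*,0)$ with bounded gradients, pass to the limit to obtain a nontrivial nonnegative solution $v_\infty$ of $-\Delta v_\infty=\mu v_\infty$, and then claim this only yields a contradiction ``under the generic assumption $\mu\notin\operatorname{Spec}(-\Delta)$.'' That hedge is unnecessary: since $v_\infty\gneq 0$ and $\|v_\infty\|_{W^{2,p}}=1$, Lemma \ref{le6.4} applied to the linear equation gives $v_\infty\gg 0$, and a Dirichlet eigenvalue of $-\Delta$ possessing a strictly positive eigenfunction can only be $\sigma_1$. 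The standing hypothesis $\mu>\sigma_1$ therefore already furnishes the contradiction, with no genericity assumption. Moreover the proposed ``fallback'' for the non-generic case --- absorbing the approach to the trivial branch into the gradient blow-up alternative --- cannot work as stated: if $u_n\to 0$ with $\|\nabla u_n\|_\infty$ bounded, elliptic $L^p$ estimates give $\|u_n\|_{W^{2,p}}\to 0$ and hence $\|\nabla u_n\|_\infty\to 0$, the opposite of the second alternative. The correct dichotomy is: along any sequence with $\|u_n\|_\infty\to 0$, either $\|\nabla u_n\|_\infty$ stays bounded (in which case the rescaling argument, valid for all $\mu>\sigma_1$, gives a contradiction), or $\|\nabla u_n\|_\infty$ is unbounded (which is already the second alternative of the statement). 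Rewriting that paragraph along these lines closes the gap and removes the spurious dependence on $\mu\notin\operatorname{Spec}(-\Delta)$.
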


The fact that the gradients of the positive solutions of quasilinear elliptic equations involving the mean curvature operator can develop singularities even when the solutions are bounded is well documented in the literature. See for instance, the classical references of Jenkins--Serrin and Serrin \cite{MC6, Se} (see also \cite[Ch. 16]{GT}), where the mean curvature
of $\partial\O$ must satisfy certain geometric conditions in order to obtain gradient bounds. A one-dimensional example was given in Theorem 3.2 of Cano-Casanova et al. \cite{CLT}. Some one-dimensional examples under Neumann boundary conditions were given by L\'{o}pez-G\'{o}mez and Omari in
\cite{LO1,LO2,LO3}.  In some one-dimensional prototypes, the continua of classical positive solution can be extended by continua of bounded variation solutions (see \cite{LO0}).

\appendix

\section{Elements of analytic varieties}\label{AB}

\noindent In this section we collect some basic concepts and results of the theory of analytic manifolds. The reader is sent to  Buffoni and Toland \cite[Ch. 7]{BT} for any further details and the proofs of the results collected in this appendix.
\par
Throughout this section, $\K\in\{\R,\C\}$.  Given an integer $n\geq 1$, a nonempty open subset $\O$ of $\K^{n}$, and a finite collection, $\mc{F}$, of analytic functions $f:\O \to\K$, the $\K$-analytic variety generated by $\mc{F}$ on $\O$ is the set
$$
    \mathscr{V}(\O,\mc{F}):=\{z\in\O : f(z)=0 \text{ for all } f\in\mc{F}\}.
$$
An analytic map $f:\O\to \K$ is said to be real-on-real if $f(z)\in\R$ for all $z\in \O\cap\R^{n}$. When $\mc{F}$ consists of real-on-real functions and $\O\cap\R^{n}\neq \emptyset$, the $\K$-analytic variety $\mathscr{V}(\O,\mc{F})$ is said to be real-on-real. A point $z\in \mathscr{V}(\O,\mc{F})$ is $m$-regular if there is a neighborhood $\mathcal{U}$ of $z$ in $\K^{n}$ such that $\mc{U}\cap \mathscr{V}(\O,\mc{F})$ is a $\K$-analytic manifold of dimension $m$.
\par
Two subsets, $A$ and $B$, of $\K^{n}$, are said to be equivalent at $z_{0}\in\K^{n}$ if there is an open neighbourhood $\mc{U}$ of $z_{0}$ such that $\mc{U}\cap A=\mc{U}\cap B$. This establishes an equivalence relation on the power set $\mathscr{P}(\K^{n})$. The equivalence class of $A\subset \K^{n}$, denoted by
$\gamma_{z_{0}}(A)$ is called the germ of $A$ at $z_{0}$. The germ at $z_{0}$ of a $\K$-analytic variety is referred to as a $\K$-analytic germ. A germ of a real-on-real $\C$-analytic variety is called a real-on-real germ. The set of all $\K$-analytic germs at $z_{0}\in\K^{n}$ is denoted by $\mathscr{V}_{z_{0}}(\K^{n})$. For any given $\alpha\in\mathscr{V}_{z_{0}}(\K^{n})$, the dimension of $\a$, $\dim_{\K}\alpha$, is the largest integer $m\in\N$ for which every representative of the class $\alpha$ contains an $m$-regular point. If no such integer exists, we set $\dim_{\K}\alpha=-1$.
\par
The main goal of this appendix is to show how analytic varieties can be viewed as zero sets of a Weierstrass polynomials. The next definition introduces this concept. For every $\d>0$ and $m\in \N$, we are denoting
$$
   B_0(\d,m):= \{(z_1,...,z_m)\in\C^m : \; |z_j|<\d,\;1\leq j\leq m\}.
$$

\begin{definition}[\textbf{Weierstrass polynomial}]
\label{deA.1}
A Weierstrass polynomial on $B_0(\d,m)$ is a polynomial $\mc{P}(\l; z_{1},\cdots,z_{m})$ of the form
\begin{equation}
\label{A.1}
		\mc{P}(\l; z_{1},\cdots,z_{m}):=\l^{d}+\sum_{j=0}^{d-1}c_{j}(z_{1},\cdots,z_{m})\l^{j}, \quad (z_{1},\cdots,z_{m})\in B_0(\d,m),
\end{equation}
for some $d\in \N$ and some analytic functions $c_{j}: B_0(\d,m)\to \K$ such that $c_{j}(0)=0$, $0\leq j\leq d-1$, and
$$
   \Delta(z_{1},\cdots,z_{m})\not\equiv 0,
$$
where $\Delta: B_0(\d,m)\to \C$ is the discriminant of \eqref{A.1}.
\end{definition}

The $\K$-varieties associated to Weierstrass polynomials are the Weierstrass varieties introduced in the next definition.

\begin{definition}[\textbf{Weierstrass variety}]
\label{deA.2}
Given $B_{0}(\delta,m)\subset \C^{m}$, $m < n$, and
$$
   \mathscr{W}=\{\mc{P}_{m+1}, \mc{P}_{m+2},\cdots, \mc{P}_{n}\}
$$
a finite set of Weierstrass polynomials on $B_0(\d,m)$ of the form
$$
    \mc{P}_{j}=\mc{P}_{j}(z_{j};z_{1},\cdots,z_{m}), \quad (z_{1},\cdots,z_{m})\in B_0(\d,m),
    \quad m+1\leq j \leq n,
$$
consider the associated family of analytic functions
$$
    \mc{W}=\{h_{m+1},h_{m+2},\cdots,h_{n}\},
$$
where $h_{j}: B_0(\d,m)\times \C^{n-m}\to \C$ are given by
$$
    h_{j}(z_{1},\cdots,z_{n}):=\mc{P}_{j}(z_{j};z_{1},\cdots,z_{m}), \quad m+1\leq j \leq n.
$$
A Weierstrass variety is any subset in $\C^{n}$ of the form $\mathscr{V}(B_0(\d,m)\times \C^{n-m},\mc{W})$.
\end{definition}

Given a Weierstrass variety $\mathscr{V}(B_0(\d,m)\times \C^{n-m},\mc{W})$, the \textit{joint discriminant} of $\mc{W}$, denoted by $\mathscr{D}[\mc{W}]:B_0(\d,m)\to\C$, is defined through
$$
   \mathscr{D}[\mc{W}](z_{1},\cdots,z_{m})=\prod_{j=m+1}^{n}\Delta_{j}(z_{1},\cdots,z_{m}), \quad (z_{1},\cdots,z_{m})\in B_0(\d,m),
$$
where, for every $j\in\{m+1,\cdots,n\}$, $\Delta_{j}:B_0(\d,m)\to \C$ is the discriminant of the Weierstrass polynomial $\mc{P}_{j}$. The \textit{branches} of the variety $\mathscr{V}(B_0(\d,m)\times \C^{n-m},\mc{W})$ are the connected components of
$$
   \mathscr{V}(B_0(\d,m)\times \C^{n-m},\mc{W})\setminus
   \left[\mathscr{V}(B_0(\d,m),\mathscr{D}[\mc{W}])\times \C^{n-m}\right].
$$
The main result concerning the structure of analytic varieties can be stated as follows.

\begin{theorem}[\textbf{Structure theorem}]
\label{thA.1}
Let $n\geq 2$ and $\alpha\in\mathscr{V}_0(\C^{n})\backslash\{0\}$ be such that $\{0\}\subset \alpha\neq \gamma_{0}(\C^{n})$. Then, there exist sets $B_{1},\cdots,B_{N}$ such that:
\begin{enumerate}
\item[{\rm (1)}] $\alpha=\gamma_{0}(B_{1}\cup\cdots\cup B_{N}\cup\{0\})$.
\item[{\rm (2)}]  Each $B_{j}$, $1\leq j\leq N$,  is, after a linear change of coordinates, a branch of a Weierstrass analytic variety.
\item[{\rm (3)}] $\dim_{\C}\alpha=\max\{\dim_{\C} B_{j} : 1\leq j \leq N\}$.
\item[{\rm (4)}] Assume that $L\subset \C^{n}$, with $\gamma_{0}(L)\neq \emptyset$, is a connected $\C$-analytic manifold of dimension $1\leq \ell \leq n$ whose points are $\ell$-regular points of a representative of $\alpha$. Then, there exists $1\leq j\leq N$ such that
$$
    \gamma_{0}(L)\subset \gamma_{0}(\bar{B}_{j}), \quad \dim_{\C}B_{j}=\ell.
$$
\item[{\rm (5)}] If $\alpha$ is real-on-real, then it can be arranged that each branch $B_{j}$ with $B_{j}\cap\R^{n}\neq \emptyset$ is real-on-real.
\item[{\rm (6)}] If $B'_{j}$, $1\leq j\leq K$, denotes the branches which intersects $\R^{n}$ non-trivially, then
		$$\alpha\cap \gamma_{0}(\R^{n})=\gamma_{0}(B'_{1}\cup \cdots \cup B'_{K}\cup\{0\}).$$
\item[{\rm (7)}] $\dim_{\R}(\alpha\cap \R^{n})=\max\{\dim_{\R}(B'_{j}\cap \R^{n}): 1\leq j \leq K\}$.
\end{enumerate}
\end{theorem}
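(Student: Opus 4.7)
The plan is to combine three classical results from local complex analytic geometry: the Weierstrass preparation theorem, the Lasker--Noether decomposition in the Noetherian unique factorization domain $\mathcal{O}_{\C^n,0}$ of germs of analytic functions at $0$, and R\"uckert's local parametrization theorem for irreducible analytic germs. All of these ingredients can be found, with the same conventions as here, in Chapter 7 of \cite{BT}.

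First, I would reduce to the irreducible case. Pick a representative of $\alpha$ cut out by finitely many germs of analytic functions vanishing at $0$, and let $I \subset \mathcal{O}_{\C^n,0}$ be the radical of the associated ideal. By Noetherianness, $I = \mathfrak{p}_1 \cap \cdots \cap \mathfrak{p}_N$ with each $\mathfrak{p}_j$ a prime ideal, and the corresponding zero-germs $\alpha_j := \gamma_0(V(\mathfrak{p}_j))$ are irreducible with $\alpha = \alpha_1 \cup \cdots \cup \alpha_N$. Set $d_j := \dim_{\C} \alpha_j$; since $\alpha \neq \gamma_0(\C^n)$ and $\{0\} \subsetneq \alpha$, one has $0 \leq d_j < n$ for every $j$ and at least one $d_j \geq 1$.

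Second, I would invoke R\"uckert's parametrization branch by branch. For each $j$, after a linear change of coordinates (valid for a Zariski-open dense subset of $\mathrm{GL}_n(\C)$), the projection onto the first $d_j$ coordinates realizes $V(\mathfrak{p}_j)$ as a finite branched cover over a polydisc $B_0(\delta_j, d_j)$. The Weierstrass preparation theorem then produces, for each index $i \in \{d_j+1,\ldots,n\}$, a Weierstrass polynomial $\mathcal{P}_i^{(j)}(z_i;z_1,\ldots,z_{d_j}) \in \mathfrak{p}_j$ whose discriminant is not identically zero (this is precisely where irreducibility and the generic choice of projection are used). The family $\mathcal{W}_j := \{\mathcal{P}_{d_j+1}^{(j)},\ldots,\mathcal{P}_n^{(j)}\}$ then defines a Weierstrass variety containing $V(\mathfrak{p}_j)$ near $0$, and $\alpha_j$ coincides with the germ at $0$ of a single branch $B_j$ of this variety in the sense of Definition~\ref{deA.2}. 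Collecting the branches $B_1,\ldots,B_N$ gives (1), (2) and, since $\dim_\C B_j = d_j$ by construction, also (3).

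For (4), I would observe that the set of $\ell$-regular points of a representative of $\alpha$ decomposes, near $0$, as the disjoint union of the smooth loci of the $\alpha_j$'s minus their pairwise intersections (which form a strictly lower-dimensional subvariety). Any connected $\ell$-dimensional analytic manifold $L$ of $\ell$-regular points must therefore lie entirely inside the smooth locus of one single $\alpha_j$, forcing $\dim_\C \alpha_j = \ell$; passing to closures delivers $\gamma_0(L) \subset \gamma_0(\bar B_j)$. Items (5)--(7) are obtained by analyzing the action of the complex conjugation $\sigma:z\mapsto \bar z$ on $\{\mathfrak{p}_1,\ldots,\mathfrak{p}_N\}$: when $\alpha$ is real-on-real, $\sigma$ permutes this family; $\sigma$-fixed primes are generated by real-on-real germs and, choosing the linear coordinate change in $\mathrm{GL}_n(\R)$, yield Weierstrass polynomials with real coefficients; non-fixed conjugate pairs $\mathfrak{p}_j,\sigma(\mathfrak{p}_j)$ intersect $\R^n$ only in the lower-dimensional real variety $V(\mathfrak{p}_j+\sigma(\mathfrak{p}_j))\cap\R^n$, which is absorbed into the real trace of the $\sigma$-fixed branches together with $\{0\}$. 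The classical identity $\dim_\R(V(\mathfrak{p}_j)\cap\R^n)=\dim_\C V(\mathfrak{p}_j)$ for $\sigma$-fixed prime components then produces (7).

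The main technical obstacle is step two: verifying carefully that each irreducible $\alpha_j$ is realized as a single branch (rather than several) of its associated Weierstrass variety. Equivalently, one must prove that the complement of the joint-discriminant locus inside $V(\mathfrak{p}_j)$ is connected; a splitting of this complement would contradict the irreducibility of $\mathfrak{p}_j$, but making this rigorous requires the full content of R\"uckert's local parametrization theorem (finite cover, monodromy, and analytic extension across the discriminant). Once that theorem is granted, the Lasker--Noether packaging, the closure step to recover $\alpha$, and the equivariant bookkeeping under $\sigma$ become essentially formal consequences.
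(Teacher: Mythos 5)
First, a point of reference: the paper does not prove Theorem~\ref{thA.1} at all --- the appendix explicitly defers to Buffoni and Toland \cite[Ch.~7]{BT} for the proof --- so there is no in-paper argument to compare against. Your route (pass to the radical ideal in $\mathcal{O}_{\C^{n},0}$, decompose it into primes $\mathfrak{p}_{1},\dots,\mathfrak{p}_{N}$, and use R\"uckert's local parametrization to realize each irreducible germ, after a generic linear change of coordinates, as the closure of a single branch of a Weierstrass variety) is a legitimate and genuinely different path to items (1)--(4) from Buffoni--Toland's inductive elimination argument with resultants, and you correctly isolate the key technical point, namely that irreducibility forces connectedness of the regular locus off the discriminant.

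Your treatment of (5)--(7), however, contains genuine errors. The ``classical identity'' $\dim_{\R}(V(\mathfrak{p})\cap\R^{n})=\dim_{\C}V(\mathfrak{p})$ for $\sigma$-fixed primes is false: $\mathfrak{p}=(z_{1}^{2}+z_{2}^{2}+z_{3}^{2})\subset\mathcal{O}_{\C^{3},0}$ is irreducible, real-on-real and conjugation-fixed, yet its real trace near the origin is $\{0\}$ while $\dim_{\C}=2$. (Fortunately (7) does not need any such identity; it follows from (6) and the fact that the dimension of a finite union of germs is the maximum of the dimensions.) More seriously, the claim that the real points of a non-fixed conjugate pair $\mathfrak{p}_{j},\sigma(\mathfrak{p}_{j})$ are ``absorbed into the real trace of the $\sigma$-fixed branches together with $\{0\}$'' is wrong, and this is precisely where the theorem is delicate. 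Take $\alpha=\gamma_{0}(V(z_{1}^{2}+z_{2}^{2}))\subset\C^{3}$: its two components $V(z_{1}\mp iz_{2})$ are swapped by conjugation, there are no $\sigma$-fixed components, and the real trace is the $x_{3}$-axis. If you present each component by the degree-one Weierstrass polynomial $z_{1}\mp iz_{2}$, item (6) holds but (5) fails (these branches are not real-on-real); if instead you present the pair by the real-on-real Weierstrass polynomial $z_{1}^{2}+z_{2}^{2}$ in $z_{1}$, its branches live in $\{z_{2}\neq 0\}$ and miss $\R^{3}$ entirely, so (6) fails. The resolution --- the step missing from your proposal --- is that the list $B_{1},\dots,B_{N}$ must be enlarged beyond the irreducible components by additional real-on-real branches carrying the real trace of each conjugate pair (here the branch $V(z_{1},z_{2})$, which lies inside $\bar{B}_{1}\cup\bar{B}_{2}$ and hence does not disturb (1)). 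Without this enlargement, (5) and (6) cannot be satisfied simultaneously, so the ``equivariant bookkeeping'' is not the formal consequence you describe.
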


A fundamental tool to parameterize the branches of Weierstrass analytic varieties is the concept of \textit{Puiseux series}. The next result is useful.

\begin{theorem}
\label{thA.2}
	Let $m=1$ and $n\geq 2$. Suppose that $B$ is a branch of the Weierstrass analytic variety
$$
  \mathscr{E}=\mathscr{V}(W\times \C^{n-1},\mc{W}),
$$
where $W$ is chosen so that $\mathscr{D}[\mc{W}]$ is non-zero on $W\setminus \{0\}$. Then, there exist $\ell\in\N$, $\delta>0$ and a $\C$-analytic function
$$
  \psi:\{z\in\C : |z|^{\ell}<\delta\}\longrightarrow \C^{n-1}
$$
such that $\psi(0)=0$ and the mapping
$$
    \Gamma:\{z\in\C : |z|^{\ell}<\delta\}\longrightarrow B\cup\{0\}, \quad z\mapsto (z^{\ell},\psi(z)),
$$
is bijective. 	Moreover, if we assume that $\gamma_{0}(B\cap \R^{n})\notin \{\emptyset, \{0\}\}$, then there exists an integer $\kappa$ such that $0\leq \kappa \leq 2\ell-1$ for which the map
\begin{equation}
\label{A.2}
		\Sigma:(-\delta^{1/\ell},\delta^{1/\ell})\longrightarrow \R^{n}\cap \bar{B}, \quad r\mapsto ((-1)^{\kappa}r^{\ell},\psi(r\exp(\kappa\pi i/\ell))),
\end{equation}
	is bijective.
\end{theorem}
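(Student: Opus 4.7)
The plan is to prove Theorem \ref{thA.2} in two parts: first construct the complex parametrization $\Gamma$ by treating $B$ as a connected covering of a punctured disk (Puiseux's theorem), and then use complex conjugation symmetry to extract the real parametrization $\Sigma$.

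For the first part, I would observe that the projection $\pi:\C\times\C^{n-1}\to \C$ onto the first coordinate, when restricted to $B$, is a local biholomorphism onto $W\setminus\{0\}$: the nonvanishing of the joint discriminant $\mathscr{D}[\mc{W}]$ on $W\setminus\{0\}$ ensures that each Weierstrass polynomial $\mc{P}_j(z_j;z_1)$ has simple roots, so the implicit function theorem applies coordinate by coordinate. Properness of $\pi|_B$ follows from the standard fact that the roots of a Weierstrass polynomial remain uniformly bounded over compact subsets of the base. Hence $\pi|_B\colon B\to W\setminus\{0\}$ is a finite connected covering of the punctured disk, which by the classification of coverings of $W\setminus\{0\}$ (whose fundamental group is $\Z$) is equivalent to $z\mapsto z^{\ell}$ for some $\ell\in\N$. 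This yields a biholomorphism $\Gamma_{0}(z)=(z^{\ell},\psi_{0}(z))$ from $\{0<|z|^{\ell}<\delta\}$ onto $B$. Since each component of $\psi_{0}$ is a root of $\mc{P}_j(\,\cdot\,;z^{\ell})$ whose lower coefficients vanish at $z=0$, it is bounded near the origin, and the Riemann removable singularity theorem provides an analytic extension $\psi$ with $\psi(0)=0$. Bijectivity of the extended $\Gamma$ onto $B\cup\{0\}$ is then immediate.

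For the second part, by Theorem \ref{thA.1}(5) the Weierstrass polynomials can be taken real-on-real once $B\cap\R^{n}$ has nontrivial germ at the origin, so the antiholomorphic involution $c\colon(z_{1},\ldots,z_{n})\mapsto(\bar z_{1},\ldots,\bar z_{n})$ preserves the ambient variety and its regular locus, and therefore permutes the branches. The assumption $\gamma_{0}(B\cap\R^{n})\notin\{\emptyset,\{0\}\}$ together with connectedness of $B$ forces $c(B)=B$. Transporting $c$ through $\Gamma$ produces an antiholomorphic involution $\tilde c:=\Gamma^{-1}\circ c\circ\Gamma$ of $\{|z|^{\ell}<\delta\}$ satisfying $\tilde c(z)^{\ell}=\overline{z^{\ell}}$, whence $\tilde c(z)=\omega\,\bar z$ for an $\ell$-th root of unity $\omega=\exp(2\pi i\kappa/\ell)$. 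The fixed point set of $\tilde c$ is the real line $L_{\kappa}=\{r\exp(\pi i\kappa/\ell):r\in\R\}$, and along $L_\kappa$ one has $z^{\ell}=(-1)^{\kappa}r^{\ell}\in\R$ and $\psi(z)\in\R^{n-1}$ by conjugation invariance of $\psi$. The restriction of $\Gamma$ to $L_{\kappa}\cap\{|z|^{\ell}<\delta\}$ therefore gives the required map $\Sigma$.

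The delicate step will be the surjectivity of $\Sigma$ onto $\R^{n}\cap\bar B$: it is not enough to check that $\Sigma$ takes values in the real locus; one must verify that every real point of $\bar B$ near $0$ arises as $\Gamma(w)$ for some $w\in L_{\kappa}$. This follows from the identity $\Gamma\circ \tilde c=c\circ \Gamma$ — indeed, $\Gamma(w)\in\R^{n}$ forces $c(\Gamma(w))=\Gamma(w)$, hence $\tilde c(w)=w$, so $w\in L_{\kappa}$ — but the argument requires checking that the real locus of $\bar B$ is contained in the image of $\Gamma$ (not merely in its closure), which in turn relies on the properness and continuity of $\Gamma$. A secondary technical subtlety is the redundancy in the statement of $\kappa\in\{0,\ldots,2\ell-1\}$: the values $\kappa$ and $\kappa+\ell$ label the same fixed line, producing parametrizations related by $r\mapsto -r$, and both should be admitted.
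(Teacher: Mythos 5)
The paper does not supply its own proof of Theorem~\ref{thA.2}; the preamble of Appendix~A explicitly refers the reader to Buffoni and Toland~\cite[Ch.~7]{BT} for the proofs of all statements collected there. Your argument is correct and follows the standard line that one finds in that reference: the covering-space (monodromy) description of the branch $\pi|_B\colon B\to W\setminus\{0\}$, whose equivalence with $z\mapsto z^\ell$ yields the complex Puiseux parametrization after a removable-singularity extension; and then, for the real statement, the observation that the antiholomorphic conjugation $c$ preserves $B$ and conjugates via $\Gamma$ to $\tilde c(z)=\omega\bar z$ with $\omega^\ell=1$, whose fixed line gives $\Sigma$. Your verification that $\tilde c$ must be a conjugate-linear rotation (Schwarz lemma applied to $z\mapsto\tilde c(\bar z)$), the identity $\Gamma\circ\tilde c=c\circ\Gamma$ used to obtain surjectivity of $\Sigma$ onto the real locus, and the remark that $\bar B=B\cup\{0\}$ within $W\times\C^{n-1}$ (so that every real point of $\bar B$ is already in the image of $\Gamma$) are all the right ingredients. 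The comment about the redundancy $\kappa\leftrightarrow\kappa+\ell$ is accurate but harmless, since the theorem is an existence statement.
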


\end{document}